\documentclass[a4paper,11pt]{amsart}

\usepackage[dvipdfmx]{graphicx}
\usepackage[dvipdfmx]{color}
\usepackage{amsmath,amssymb}
\usepackage{amsthm}
\usepackage{mathrsfs}
\usepackage{enumitem}
\usepackage{mathtools}
\usepackage[all,cmtip,2cell]{xy}
\usepackage{array}
\usepackage{here}



%
\theoremstyle{plain}
\newtheorem{thm}{Theorem}[section]
\newtheorem{prop}[thm]{Proposition}
\newtheorem{lem}[thm]{Lemma}

\newtheorem*{thm*}{Theorem}
\newtheorem{prob}{Problem}[section]

\theoremstyle{definition}
\newtheorem{defi}[thm]{Definition}

\newtheorem*{NaC}{Notation and Conventions}
\newtheorem{nota}{Notation}
\newtheorem*{ACK}{Acknowledgement}

\theoremstyle{remark}
\newtheorem{rem}[thm]{Remark}

\newtheorem{ex}[thm]{Example}

\newtheorem{claim}{Claim}
\newtheorem{claimproof}{Proof of Claim}
\newtheorem*{claim*}{Claim}
\newtheorem*{claimproof*}{Proof of Claim}

\numberwithin{equation}{subsection}

\newcommand{\N}{\mathbb{N}}
\newcommand{\Z}{\mathbb{Z}}
\newcommand{\Q}{\mathbb{Q}}

\newcommand{\C}{\mathbb{C}}
\newcommand{\A}{\mathbb{A}}
\renewcommand{\P}{\mathbb{P}}
\newcommand{\F}{\mathbb{F}}

\newcommand{\ol}{\overline}

\newcommand{\wt}{\widetilde}

\let\div=\relax

\DeclareMathOperator{\Spec}{Spec}

\DeclareMathOperator{\Pic}{\mathrm{Pic}}
\DeclareMathOperator{\div}{\mathrm{div}}

\DeclareMathOperator{\eu}{\chi_{\mathrm{top}}}

\DeclareMathOperator{\Bl}{\mathrm{Bl}}

\DeclareMathOperator{\Sing}{\mathrm{Sing}}


\title[Fano compactifications]{Fano compactifications of contractible affine 3-folds with trivial log canonical divisors}
\author[M. Nagaoka]{Masaru Nagaoka}
\address{Graduate School of Mathematical Sciences\\The University of Tokyo\\3-8-1 Komaba\\Meguro-ku, Tokyo 153-8914, Japan}
\email{nagaoka@ms.u-tokyo.ac.jp}
\subjclass[2010]{Primary: 14R10; Secondary: 14J45, 14J30.}
\keywords{Contractible affine threefolds, compactifications, Fano threefolds}
\begin{document}
\maketitle
\begin{abstract}
T.\,Kishimoto raised the problem to classify all compactifications of contractible affine $3$-folds into smooth Fano $3$-folds with second Betti number two and classified such compactifications whose log canonical divisors are not nef.
In this article, we show that there are $14$ deformation equivalence classes of smooth Fano $3$-folds which can admit structures of such compactifications whose log canonical divisors are trivial. 
We also construct an example of such compactifications with trivial log canonical divisors for each of all the 14 classes.

\end{abstract}
\tableofcontents
\setcounter{section}{0}

\section{Introduction.}\label{sec:intro}
Throughout the present article we work over the field of complex numbers $\C$. In \cite{hi54}, F.\,Hirzebruch raised the problem to classify all compactifications of the affine space $\A^n$ into compact complex manifolds with second Betti number $B_2=1$ for all $n$. By contributions of M.\,Furushima, N.\,Nakayama, Th.\,Peternell and M.\,Schneider (cf. \cite{fur86, fur90, fur92, fur93, f-n89a, f-n89b, pe89, p-s88}),
the classification of compactifications of $\A^3$ into smooth projective 3-folds with $B_2=1$ was completed. 

Let $V$ be a smooth projective 3-fold with $B_2=1$ which contains $\A^3$ as an open set. Let $D\coloneqq V \setminus \A^3$. Then $V$ is a Fano 3-fold and the anticanonical divisor $-K_V$ is linearly equivalent to $rD$, where $r$ is the Fano index of $V$. First, in \cite{fur86, f-n89a, f-n89b, p-s88}, the pairs $(V, D)$ with $r \geq 2$ were classified. Then Furushima \cite{fur93} classified the pairs $(V, D)$ with $r=1$.

T.\,Kishimoto \cite{ki05} observed that their arguments make use of only the contractibility of $\A^3$ and that, by the arguments, it is possible to classify compactifications of contractible affine 3-folds into smooth Fano 3-folds with $B_2=1$. After that, Kishimoto raised the following problem as the corresponding problem in the case when $B_2=2$.

\begin{prob}\label{pr:main}
Classify triplets $(V, D_1 \cup D_2, U)$, where $V$ is a smooth Fano 3-fold with $B_2=2$, $D_1$ and $D_2$ are prime divisors on $V$, and $U$ is the complement $V \setminus (D_1 \cup D_2)$ which is a contractible affine 3-fold.
\end{prob}

We often call $D_1$ and $D_2$ \emph{the boundary divisors} and $K_V+D_1+D_2$ \emph{the log canonical divisor}.

Using the list \cite[Table 2]{m-m81}, Kishimoto classified all such triplets in \cite{ki05} when the log canonical divisors are not nef. For this reason, we consider such triplets whose log canonical divisors are nef. Then, by virtue of the Kawamata-Shokurov base point free theorem, the log canonical divisor for such a triplet is semiample and we denote by $\kappa(K_V+D_1+D_2)$ the dimension of the image of the morphism given by the linear system $\lvert m(K_V+D_1+D_2) \rvert$ for sufficiently large $m$.

In this article, we shall investigate triplets $(V, D_1 \cup D_2, U)$ as in Problem 1.1 satisfying the following condition:
\[
K_V + D_1 + D_2 \textup{ is nef and } \kappa(K_V + D_1 +D_2)=0.
\]
If the condition holds for a triplet $(V, D_1 \cup D_2, U)$, then $m(K_V+D_1+D_2)=0$ for sufficiently large $m$. Since the Picard groups of $V$ is torsion-free, we have $m=1$. Hence the condition is rewritten as
\[
(\dagger) \colon K_V+D_1+D_2=0.
\]
In this article, a triplet $(V, D_1 \cup D_2, U)$ is called \emph{a triplet with }$(\dagger)$ if it is as in Problem \ref{pr:main} and satisfies $(\dagger)$.
Note that a similar condition $K_V+D=0$ holds for every smooth projective compactification $(V, D)$ of $\A^3$ with $B_2=1$ and $r=1$.

Our main result is the following, where we denote by $\Q^3$ the smooth quadric 3-fold and by $V_5$ the smooth quintic del Pezzo 3-fold. 
\begin{thm}\label{thm:main}
\begin{enumerate} \item[]
\item[\textup{(1)}] Let $(V, D_1 \cup D_2, U)$ be a triplet as in Problem \ref{pr:main}. Suppose the triplet satisfies $(\dagger)$. Then there are only $14$ deformation equivalence classes to one of which the Fano 3-fold $V$ belongs. We give a precise description of $V$ below. 
\begin{enumerate}
\item[\textup{(A)}] If $V$ is imprimitive, then it is the blow-up of a Fano 3-fold $W$ along a smooth curve $C$ such that one of the following holds:
\begin{enumerate}
\item[\textup{(A1)}] $W \cong \P^3$ and $C$ is an elliptic curve of degree $3,4$.
\item[\textup{(A2)}] $W \cong \P^3$ and $C$ is a rational curve of degree $1,2,3,4$.
\item[\textup{(A3)}] $W \cong \Q^3$ and $C$ is a rational curve of degree $1,2,3,4$.
\item[\textup{(A4)}] $W \cong V_5$ and $C$ is a rational curve of degree $1,2,3$.
\end{enumerate}
\item[\textup{(B)}] If $V$ is primitive, then $V$ is one of the following:
\begin{enumerate} 
\item[\textup{(B1)}] $V \subset \P^2 \times \P^2$ is a divisor of bidegree $(1, 2)$.
\item[\textup{(B2)}] $V \cong \P^1 \times \P^2$.
\item[\textup{(B3)}] $V \cong \P_{\P^2}(\mathcal{O} \oplus \mathcal{O}(2))$, which is the blow-up of the cone over the Veronese surface at the vertex.
\end{enumerate}
\end{enumerate} 
\item[\textup{(2)}] For each $\mathcal{C}$ of $14$ classes as in \textup{(1)}, there is a triplet $(V, D_1 \cup D_2, U)$ as in Problem \ref{pr:main} satisfying $(\dagger)$ such that $V$ belongs to $\mathcal{C}$.
\end{enumerate}
\end{thm}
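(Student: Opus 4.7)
The plan is to handle parts (1) and (2) separately, with Mori--Mukai's classification \cite{m-m81} of smooth Fano $3$-folds with $B_2=2$ (giving $36$ deformation classes) as the starting point.

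For part (1), for each of the $36$ classes I would list the numerical classes of divisors in $\Pic(V)$ and enumerate all decompositions $-K_V \equiv D_1+D_2$ with $D_i$ prime effective; this is a short finite list for each $V$. The contractibility of $U=V\setminus(D_1\cup D_2)$ is then tested against three layers of constraints: (i) affineness, which follows from ampleness of $D_1+D_2=-K_V$ but also requires each $D_i$ to actually be realized by a prime divisor in its numerical class; (ii) the Euler-characteristic identity $\eu(D_1\cup D_2)=\eu(V)-1$ coming from $\eu(U)=1$; and (iii) vanishing of $\pi_1(U)$ and of higher $H_\ast(U)$, obtained via Lefschetz duality and the long exact sequence of $(V,D_1\cup D_2)$ combined with Mayer--Vietoris for $D_1\cup D_2$. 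The structural device throughout is the pair of extremal contractions $\vp_1,\vp_2\colon V\to V_i$ coming from $B_2(V)=2$.

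In the imprimitive case, one contraction is a blow-up $\pi\colon V\to W$ along a smooth curve $C$ with exceptional divisor $E$; writing $D_i=\pi^\ast D_i'-m_iE$ and using $D_1+D_2\equiv -\pi^\ast K_W - E$ forces $D_1'+D_2'\equiv -K_W$ on $W$ together with $\{m_1,m_2\}=\{0,1\}$. Thus up to labeling $D_1$ is the strict transform of a divisor $D_1'\supset C$ on $W$ and $D_2=\pi^\ast D_2'$ with $D_2'\not\supset C$. Since $B_2(W)=1$, $W$ is a Fano $3$-fold of Picard rank one, and the contractibility of $U$ (together with the requirement that $D_1'\cap D_2' \supset C$ be a smooth curve) restricts $W$ to $\{\P^3,\Q^3,V_5\}$ and the curve $C$ to the genera and degrees listed in (A1)--(A4). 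In the primitive case Mori--Mukai's list leaves only a handful of candidates, and a direct check yields (B1)--(B3).

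For part (2), for each of the $14$ classes I would exhibit a concrete triplet. In the imprimitive cases I pick $W\in\{\P^3,\Q^3,V_5\}$, a curve $C\subset W$ of the prescribed type, and an effective divisor $D_1'\supset C$ of the right numerical class, setting $D_2':=-K_W-D_1'$; then $V=\mathrm{Bl}_C W$ and $(D_1,D_2)$ are the divisors described above. I verify $(\dagger)$ from the blow-up intersection theory and identify $U\cong W\setminus(D_1'\cup D_2')$ with a known contractible affine $3$-fold. For the primitive cases the explicit projective models work directly: in (B2) the two natural projections of $\P^1\times\P^2$ yield boundary divisors whose complement is $\A^3$; in (B3) one uses the negative section and a fiber of the $\P^1$-bundle $\P_{\P^2}(\mc O\oplus\mc O(2))$; in (B1) one uses the two projections from the divisor $V\subset\P^2\times\P^2$. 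The hardest step is clearly part (1): running the case analysis across the Mori--Mukai list while consistently applying contractibility. Unlike the non-nef case \cite{ki05}, where a Mori contraction from $U$ provides rigid structure, under $(\dagger)$ there is no such contraction on $U$, so contractibility must be probed directly through Euler-characteristic bookkeeping and $\pi_1$-computations on each residual case.
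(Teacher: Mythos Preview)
Your overall strategy matches the paper's: run through Mori--Mukai, pin down the numerical classes of $D_1,D_2$ via Proposition~\ref{prop:2.6}, and use the Euler-characteristic identity of Lemma~\ref{lem:euler} to exclude classes. But several of your concrete steps are incorrect or missing.

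\textbf{Part (2), imprimitive cases.} Your claim that $U\cong W\setminus(D_1'\cup D_2')$ is false. If $\pi\colon V\to W$ blows up $C$ and $D_1=\pi^{-1}_*D_1'$ with $C\subset D_1'$, then $U$ contains the open part $E_\pi\setminus(D_1\cup D_2)$ of the exceptional divisor, which sits over $C$; the map $\pi|_U$ is not an isomorphism onto its image. What is true is that $U$ is an \emph{affine modification} of $W\setminus D_2'$ with locus $(C\setminus D_2'\subset D_1'\setminus D_2')$, and the paper invokes the Kaliman--Zaidenberg criterion (Theorem~\ref{thm:KZ}, Lemma~\ref{lem:const1}) to deduce contractibility of $U$ from a homology comparison between $C\setminus D_2'$ and $D_1'\setminus D_2'$. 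Without this, your constructions do not verify contractibility. Also, your parenthetical ``$D_1'\cap D_2'\supset C$'' is not what is required (you yourself set $C\not\subset D_2'$).

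\textbf{Part (2), primitive cases.} Your proposed divisors for (B2) and (B3) do not satisfy $(\dagger)$. On $\P^1\times\P^2$ one has $-K_V\sim 2H_1+3H_2$, so ``the two natural projections'' give $H_1+H_2\neq -K_V$; the paper instead takes $D_1$ of bidegree $(1,1)$ and $D_2$ of bidegree $(1,2)$ and uses Kaliman's characterization of $\A^3$ (Theorem~\ref{thm:2.26}) to identify $U$. Likewise on $\P_{\P^2}(\mathcal O\oplus\mathcal O(2))$ the negative section plus a pullback of a line is not anticanonical; the paper builds $D_1,D_2$ from explicit hypersurfaces in $\P(1,1,1,2)$ and again appeals to Theorem~\ref{thm:2.26}.

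\textbf{Part (1).} ``A direct check yields (B1)--(B3)'' hides the real work. Excluding No.~32 and No.~35 is immediate from Proposition~\ref{prop:2.6}, but Nos.~2, 6, 8 need Euler-number estimates on the boundary divisors via their double-cover/conic-bundle structures, and No.~18 is genuinely delicate: the paper spends Lemmas~\ref{lem:No18.1}--\ref{lem:No18.5} analyzing the non-normal possibilities for $D_1$ to squeeze out the needed bound $\eu(D_1)\ge 2$. Similarly, in the imprimitive case your sentence ``restricts $W$ to $\{\P^3,\Q^3,V_5\}$ and the curve $C$ to the listed genera and degrees'' is the content of \S\ref{sec:excimpri}: one first bounds $W$ (Proposition~\ref{prop:W_1}) via Lemma~\ref{lem:euler} and del Pezzo surface estimates, and then excludes the overshooting elliptic curves (Propositions~\ref{prop:No17}, \ref{prop:No23}, \ref{prop:No14}) using fine information about configurations of $(-1)$-curves on singular del Pezzo surfaces. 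Your outline does not supply any of this.
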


\begin{rem}
To prove Theorem \ref{thm:main}, we use the list \cite[Table 2]{m-m81} of smooth Fano 3-folds with $B_2=2$. Each type as in Theorem \ref{thm:main} corresponds to the numbers in [ibid.] as follows:
\begin{enumerate}
\item[(A1)] $V$ is of No.25 or No.28.
\item[(A2)] $V$ is of No.22, No.27, No.30 or No.33.
\item[(A3)] $V$ is of No.21, No.26, No.29 or No.31.
\item[(A4)] $V$ is of No.20, No.22 or No.26.
\item[(B1)] $V$ is of No.24.
\item[(B2)] $V$ is of No.34.
\item[(B3)] $V$ is of No.36.
\end{enumerate}
Note that No.22 and No.26 appear twice above.
\end{rem}
In this article, a triplet $(V, D_1 \cup D_2, U)$ with $(\dagger)$ is called \emph{of type $(*)$} if $V$ belongs to the type $(*)$ of Theorem \ref{thm:main}. 

Theorem \ref{thm:main} determines the candidate of the ambient spaces $V$ of triplets $(V, D_1 \cup D_2, U)$ with $(\dagger)$. In the forthcoming article \cite{n2}, we determine all triplets $(V, D_1 \cup D_2, U)$ with $(\dagger)$ of type (A1) or (A2) and it turns out that $U \cong \A^3$ except for one case.

This paper is structured as follows. 

In \S\ref{sec:pre}, we recall some facts about Gorenstein del Pezzo surfaces (\S\ref{sec:dP}) and topologies of varieties (\S\ref{sec:top}), which we use to prove Theorem \ref{thm:main}.

In \S\ref{sec:boundary}, we show that the linear equivalence classes of the boundary divisors of a triplet with $(\dagger)$ are uniquely determined up to permutation of the boundary divisors. In Lemma \ref{lem:euler}, we also prove an important equality between topological invariants about a triplet with $(\dagger)$, which is a key to prove Theorem \ref{thm:main} (1).

In \S\ref{sec:constA}, we construct an examples of triplets with $(\dagger)$ for each of all deformation equivalence classes of ambient spaces of type (A). To do so, we introduce a sufficient condition of triplets with $(\dagger)$ of type (A) in Lemma \ref{lem:const1}. We actually construct examples in \S\ref{sec:constA12}--\ref{sec:constA4}. We also show that the above condition is not a necessary condition in \S\ref{sec:counterexA}.

In \S\ref{sec:constB}, we construct an examples of triplets with $(\dagger)$ for each of remaining classes, i.e. of type (B). We review Kaliman's characterization of $\A^3$ (Theorem \ref{thm:2.26}) in \S\ref{sec:Kaliman} to construct examples explicitly in \S\ref{sec:constB3}--\S\ref{sec:constB12}. By combining \S\ref{sec:constA} and \S\ref{sec:constB}, we complete the proof of Theorem \ref{thm:main} (2).

In \S\ref{sec:excimpri}--\ref{sec:excpri}, we prove Theorem \ref{thm:main} (1) by showing that, unless the ambient space is of type (A) and (B), a triplet with $(\dagger)$ does not satisfy the equality of Lemma \ref{lem:euler}. \S\ref{sec:excimpri} (resp.\,\S\ref{sec:excpri}) deals with the case that the ambient space is imprimitive (resp.\,primitive).

\begin{NaC}
In this article, we always assume that Fano varieties are smooth.
We use the numbers assigned to Fano 3-folds with $B_2=2$ as in \cite[Table 2]{m-m81}. 
We also employ the following notation.
\begin{itemize}
\item $B_i(X)$: the $i$-th Betti number of a topological space $X$.
\item $\eu(X)$: the topological Euler number of a topological space $X$.
\item $\Sing\,X$: the singular locus of a variety $X$.
\item $K_X$: the canonical divisor of a Gorenstein variety $X$.
\item $p_a(r)$: the arithmetic genus of a curve $r$.
\item $E_f$: the exceptional divisor of a birational morphism $f$.
\item $f^{-1}_*(D)$: the strict transform of a divisor $D$ by a birational map $f$.
\item $\Q^3$: the smooth quadric 3-fold in $\P^4$.
\item $V_d$: a smooth del Pezzo 3-fold of degree $d$.
\item $\Q^2_0$: the quadric cone in $\P^3$.
\item $\F_n$: the Hirzebruch surface of degree $n$.
\item $f_n$: a ruling of $\F_n$.
\item $\Sigma_n$: the minimal section of $\F_n$.
\end{itemize}
\end{NaC}

\begin{ACK}
The author is greatly indebted to Prof.\,Hiromichi Takagi, his supervisor, for his encouragement, comments, and suggestions. He also wishes to express his gratitude to Prof.\,Takashi Kishimoto for his helpful comments and suggestions. He also would like to express his gratitude to Dr.\,Akihiro Kanemitsu and Dr.\,Takeru Fukuoka for giving very significant advice in order to make this article more readable.
\end{ACK}
\section{Preliminaries}\label{sec:pre}

\subsection{Gorenstein del Pezzo surfaces}\label{sec:dP}
In this subsection, we review some facts on Gorenstein del Pezzo surfaces.
\begin{thm}[cf.\cite{h-w81}, \cite{a-f83}]\label{thm:dP}
Let $S$ be a Gorenstein del Pezzo surface of degree $d \geq 3$.
\begin{enumerate}
\item[\textup{(1)}] If $S$ is normal and rational, then $\eu(S)=2 + B_2(S) \geq 3$.
\item[\textup{(2)}] If $S$ is normal and irrational, then $\eu(S)=1$.
\item[\textup{(3)}] If $S$ is non-normal, then $\eu(S)\geq 2$.
\item[\textup{(4)}] If $S$ is not a cone over a curve, then $\eu(S)\geq 3$.
\end{enumerate}
\end{thm}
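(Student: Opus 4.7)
The plan is to invoke the structural classifications of Gorenstein del Pezzo surfaces of degree $d\geq 3$ supplied by \cite{h-w81} (normal case) and \cite{a-f83} (non-normal case), and then compute the topological Euler number case by case via additivity of $\eu$.

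For (1), since $d\geq 3$, the normal Gorenstein del Pezzo $S$ has at worst du Val (rational double point) singularities. Take the minimal resolution $\pi\colon\widetilde{S}\to S$ with exceptional divisor $E=\bigsqcup_i E^{(i)}$, where each $E^{(i)}$ is a tree of $n_i$ smooth rational $(-2)$-curves contracted to a singular point. Then $\eu(E^{(i)})=n_i+1$, and additivity of $\eu$ applied to the stratifications $S=S^{\sm}\sqcup\Sing S$ and $\widetilde{S}=\pi^{-1}(S^{\sm})\sqcup E$ yields
\[
\eu(S)=\eu(\widetilde{S})-\sum_i n_i.
\]
Since $S$ is rational, so is $\widetilde{S}$, giving $\eu(\widetilde{S})=2+B_2(\widetilde{S})$. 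Rationality of the singularities further yields $B_2(\widetilde{S})=B_2(S)+\sum_i n_i$ (the exceptional classes span the kernel of $\pi_{\ast}$ on $H^2(-;\Q)$). Combining, $\eu(S)=2+B_2(S)$, and $B_2(S)\geq 1$ follows from projectivity.

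For (2), by \cite{h-w81} every normal irrational Gorenstein del Pezzo surface of degree $\geq 3$ is a projective cone over a smooth elliptic curve $C$ with vertex $v$. The projection from $v$ presents $S\setminus\{v\}$ as an $\A^1$-bundle over $C$, so $\eu(S)=1+\eu(\A^1)\cdot\eu(C)=1$.

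For (3) and (4), the classification \cite{a-f83} presents each non-normal Gorenstein del Pezzo surface of degree $\geq 3$ as an explicit gluing of a smooth rational surface along rational curves. Let $\nu\colon S^{\nu}\to S$ be the normalization, $D\subset S$ the non-normal locus, and $D^{\nu}=\nu^{-1}(D)$; additivity gives
\[
\eu(S)=\eu(S^{\nu})-\eu(D^{\nu})+\eu(D).
\]
Since $S^{\nu}$ is a disjoint union of smooth rational surfaces and $D,D^{\nu}$ are rational, a direct case-by-case inspection of the list yields $\eu(S)\geq 2$, with equality exactly when $S$ is a cone; therefore $\eu(S)\geq 3$ whenever $S$ is not a cone. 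The main obstacle is running cleanly through the finite but nontrivial list of non-normal types from \cite{a-f83} and verifying the bound uniformly; the earlier cases are more straightforward consequences of resolution/additivity.
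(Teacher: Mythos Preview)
The paper does not supply a proof of this theorem; it is attributed wholesale to \cite{h-w81} and \cite{a-f83}. Your sketch is therefore not being compared against a proof in the paper, but it is the natural route one would take to extract the statement from those sources, and the outline is sound.

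A few remarks on the details. In (1), passing from $\eu(S)=\eu(\widetilde S)-\sum_i n_i$ and $\eu(\widetilde S)=2+B_2(\widetilde S)$ to $\eu(S)=2+B_2(S)$ requires knowing $B_1(S)=B_3(S)=0$, not only the relation between $B_2(\widetilde S)$ and $B_2(S)$; this holds because du Val singularities have rational homology sphere links, but it deserves a word. In (3), the normalization of an irreducible variety is irreducible, so ``a disjoint union of smooth rational surfaces'' should read ``a smooth rational surface'' (indeed $\P^2$ or some $\F_n$ in the cases arising from \cite{a-f83}). For (4), you should make the reduction explicit: normal rational $S$ already satisfy $\eu(S)\geq 3$ by (1), and normal irrational $S$ are cones by \cite{h-w81}, so only the non-normal non-cone case remains. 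You are candid that the case-by-case verification from \cite{a-f83} is not carried out; since the paper treats the theorem as a citation, this matches the paper's level of detail.
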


\begin{lem}\label{lem:cone}
Let $S$ be a Gorenstein del Pezzo surface of degree $d \geq 4$ which is the cone over a curve. Then $S$ cannot be embedded in smooth 3-fold. 
\end{lem}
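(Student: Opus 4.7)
The plan is to exploit the anticanonical embedding and compare embedding dimensions: I will show that the vertex of such a cone has Zariski tangent space of dimension $d \geq 4$, whereas a divisor inside a smooth $3$-fold has embedding dimension at most $3$ everywhere, yielding a contradiction.

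First I would invoke the classical fact (cf.\ \cite{h-w81}) that a Gorenstein del Pezzo surface of degree $d \geq 3$ is anticanonically embedded as a projectively normal, non-degenerate surface of degree $d$ inside $\P^d$. Let $v \in S$ be the vertex of the cone and pick a hyperplane $H \sb \P^d$ with $v \notin H$. Then $C \coloneqq S \cap H$ is a curve of degree $d$ in $H \cong \P^{d-1}$, and $S$ is the projective cone with apex $v$ over $C$. Non-degeneracy of $S$ in $\P^d$ forces $C$ to be non-degenerate in $\P^{d-1}$.

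Next I would compute the embedding dimension of $S$ at $v$. Working in the affine chart $\A^d = \P^d \setminus H$ with $v$ placed at the origin, $S \cap \A^d$ is the affine cone over $C \sb \P^{d-1}$, so its ideal is the affine cone ideal of $C$. Because $C$ is non-degenerate, this ideal contains no linear form, hence every defining equation of $S$ at $v$ vanishes to order at least $2$. Consequently
\[
\dim_{\C} \mf{m}_{S,v}/\mf{m}_{S,v}^2 \;=\; d \;\geq\; 4.
\]

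Finally, suppose for contradiction that $\iota \colon S \hra Y$ is an embedding of $S$ into a smooth $3$-fold $Y$. Locally at $\iota(v)$ the surface $S$ is a Cartier divisor in the regular local ring $\mc{O}_{Y, \iota(v)}$ of dimension $3$, so the embedding dimension of $S$ at $v$ is at most $3$, contradicting the displayed equality. The only non-trivial point in this plan is the tangent-space computation at the vertex; everything else is a direct consequence of Gorenstein del Pezzo theory together with the definition of smoothness.
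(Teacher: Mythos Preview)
Your proof is correct and follows essentially the same line as the paper's: both show that the embedding dimension of $S$ at the cone vertex equals $d \geq 4$, which is incompatible with being a hypersurface in a smooth $3$-fold. The only difference is in how the absence of linear forms in the local ideal is justified---the paper invokes \cite[Corollary~1.5]{fuj90} to write $S \subset \P^d$ as an intersection of quadrics (each then singular at the vertex), whereas you argue directly from the non-degeneracy of the base curve $C \subset \P^{d-1}$, which is a slightly more elementary route to the same conclusion.
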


\begin{proof}
The surface $S \subset \P^d$ is an intersection of quadric hypersurfaces by \cite[Corollary 1.5]{fuj90}. Since $S$ is the cone over a curve, such quadric hypersurface is singular at the vertex of $S$. Then we can compute that the embedding dimension of $S$ at the vertex is $d \geq 4$. Hence we have the assertion. 
\end{proof}

\begin{nota}\label{no:dP} For a non-normal surface $S$, we use the following notation:
\begin{itemize}
\item $\sigma_S \colon \ol{S} \rightarrow S$: the normalization.
\item $\mathcal{C}_S \subset \mathcal{O}_S$: the conductor ideal of $\sigma_S$.
\item $E_S\coloneqq V_S(\mathcal{C}_S)$ and $\ol{E}_S\coloneqq V_{\ol{S}}(\sigma_S^*(\mathcal{C}_S))$: the subschemes defined by $\mathcal{C}_S$. We call $E_S$ (resp.\,$\ol{E}_S$) \emph{the conductor locus} of $S$ (resp.\,$\ol{S}$).
\end{itemize} 
\end{nota}

The following lemma is used in our proof of Propositions \ref{prop:No17} and \ref{prop:No14}.

\begin{lem}\label{lem:line}
Let $S$ be a non-normal Gorenstein del Pezzo surface of degree $d \geq 3$. Suppose that $S$ belongs to the class \textup{(C)} in \cite{a-f83} and $\ol{E}_{S}$ is reducible. Then each member $D \in \lvert -K_{S} \rvert$ satisfies $B_2(D) <d$.
\end{lem}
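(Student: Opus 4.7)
The plan is to lift the problem to the normalization and count irreducible components. Let $\sigma=\sigma_S\colon\ol S\to S$ denote the normalization and set $\ol D:=\sigma^{-1}(D)$. Since $\sigma$ is birational and an isomorphism away from $E_S$, the components of $\ol D$ biject with those of $D$, so $B_2(D)$ equals the number of irreducible components of $\ol D$. The Gorenstein conductor formula $\sigma^*K_S=K_{\ol S}+\ol E_S$ then places $\ol D$ inside the linear system $|{-K_{\ol S}-\ol E_S}|$ on $\ol S$.

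Next, I would invoke the explicit description of class \textup{(C)} in \cite{a-f83}: the normalization $\ol S$ is a Hirzebruch surface $\F_n$ for some $n\ge 0$, and $\ol E_S$ is a prescribed divisor on $\F_n$ expressible in the N\'eron--Severi basis $\{\Sigma_n,f_n\}$. Under the hypothesis that $\ol E_S$ is reducible, one is left with a small finite list of configurations $\ol E_S=a\Sigma_n+bf_n$, in each of which $-K_{\ol S}-\ol E_S=(2-a)\Sigma_n+(n+2-b)f_n$ and $d=(-K_{\ol S}-\ol E_S)^2$ are both determined explicitly.

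The final step is a subcase-by-subcase numerical check. A member of $\bigl|(2-a)\Sigma_n+(n+2-b)f_n\bigr|$ decomposes as at most one horizontal curve (a section when $a\le 1$, nothing horizontal when $a=2$) together with some rulings $f_n$, bounding the number of components by a linear expression in $n$ and $b$. It then remains to verify that this linear bound is strictly less than the quadratic expression $d$ in each allowed triple $(n,a,b)$ with $d\ge 3$; the inequality is expected to hold with room to spare.

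The main obstacle is the bookkeeping rather than any single deep step: one must extract from \cite{a-f83} the precise list of reducible configurations of $\ol E_S$ permitted in class \textup{(C)}, translate them into the $(n,a,b)$ framework above, and then confirm the component-count inequality uniformly across all of them. Once this enumeration is in hand, the remaining computations are routine intersection theory on $\F_n$.
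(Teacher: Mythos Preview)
Your overall plan---lift via $\sigma^*$ and count components on the Hirzebruch surface---is exactly what the paper does, but your final ``subcase check'' does not go through as stated. For class \textup{(C)} there is no list to compile: one always has $\ol S\cong\F_{d-2}$ with $\ol E_S\sim\Sigma_{d-2}+f_{d-2}$, so $\sigma^*(-K_S)\sim\Sigma_{d-2}+(d-1)f_{d-2}$ and $d=n+2$ is \emph{linear} in $n$, not quadratic. A member of $\lvert\Sigma_{d-2}+(d-1)f_{d-2}\rvert$ can have exactly $d$ irreducible components, namely $\Sigma_{d-2}$ together with $d-1$ distinct rulings. Thus the component bound you reach is $d$, not strictly less, and your inequality fails precisely at this extremal configuration.

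What is missing is the one place where the reducibility hypothesis on $\ol E_S$ actually does work. When $\ol E_S$ is reducible it equals $\Sigma_{d-2}\cup F$ for a single ruling $F$, and $\sigma$ maps each of these isomorphically onto $E_S$. Since $D$ is Cartier, no component of $D$ is contained in $E_S$, and hence neither $\Sigma_{d-2}$ nor $F$ appears in $\sigma^*D$. Once $\Sigma_{d-2}$ is excluded, any member of $\lvert\Sigma_{d-2}+(d-1)f_{d-2}\rvert$ has at most two irreducible components (an irreducible section of class $\Sigma_{d-2}+kf_{d-2}$ with $k\ge d-2$, plus at most one ruling), giving $B_2(D)\le 2<d$ immediately. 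The paper's endgame is phrased slightly differently: assuming $B_2(D)=d$, it shows each strict transform $\sigma_*^{-1}r_i$ is a ruling, and then uses the explicit gluing $\Sigma_{d-2}\cup F\to E_S$ to see that these $d$ rulings remain pairwise disjoint down in $S$, making $D$ disconnected---contradicting ampleness of $-K_S$.
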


\begin{proof}
Since $S$ belongs to the class (C) of \cite{a-f83}, its normalization $\ol{S}$ is isomorphic to $\F_{d-2}$ and the conductor locus $\ol{E}_{S}$ of $\ol{S}$ is linearly equivalent to $\Sigma_{d-2}+f_{d-2}$. Since $\ol{E}_{S}$ is reducible, it consists of $\Sigma_{d-2}$ and a ruling $F$ of $\F_{d-2}$. The normalization $\sigma_S \colon \ol{S} \rightarrow S$ induces isomorphisms from both $\Sigma_{d-2}$ and $F$ to $E_S$.
We also have $-\sigma_S^*(K_S) \sim \Sigma_{d-2}+(d-1)f_{d-2}$.

As $B_2(D) \leq (D \cdot -K_S)=d$, it suffices to show that $B_2(D) \neq d$.
On the contrary, suppose that $B_2(D) = d$. 
Then $D$ consists of curves $r_1, \dots, r_d$ with $(r_i \cdot -K_S)=1$ for $i=1, \dots, d$. 

Fix $i \in \{1, \dots, d\}$. Then $r_i \not \subset E_S$ since $D$ is Cartier. 
As $\ol{E}_S=\Sigma_{d-2}+F$, we have ${\sigma^{-1}_{S*}}(r_{i}) \neq \Sigma_{d-2}$ or $F$.
If we write ${\sigma^{-1}_{S*}}(r_{i}) \sim a\Sigma_{d-2}+bf_{d-2}$, then it holds that 
$1=(r_i \cdot -K_S)_S=({\sigma^{-1}_{S*}}(r_{i}) \cdot -\sigma_S^{*}K_S)_{\ol{S}}=(a\Sigma_{d-2}+bf_{d-2} \cdot \Sigma_{d-2}+(d-1)f_{d-2})_{\ol{S}}=a+b.$ 
Hence ${\sigma^{-1}_{S*}}(r_{i}) \sim f_{d-2}$ for $i=1, \dots, d$. However this implies that $D$ is disconnected since $\sigma_S$ is as stated, which contradicts the ampleness of $D$.
\end{proof}

\subsection{Topologies of varieties}\label{sec:top}
In this subsection, we summarize some facts about the topologies of varieties which we apply in our proof of Propositions \ref{prop:5.2} and \ref{prop:5.3}.

\begin{lem}\label{lem:2.18}
Let $X$ be a smooth contractible affine 3-fold, $S \subset X$ a closed irreducible normal surface and $r \subset S$ a closed irreducible smooth curve. Let $\varphi\colon \mathrm{Bl}_{r}X \rightarrow X$ be the blow-up of $X$ along $r$. Suppose that $N\coloneqq \mathrm{Bl}_{r}X \setminus \varphi^{-1}_*(S)$ is contractible. Then we have
\[
H_i(S \setminus \Sing\,S, \Z)=
\left\{
\begin{array}{ll}
\Z &i=0\\
H_1(r \setminus (r \cap \Sing\,S), \Z)&i=1\\
\Z^{\sharp\Sing\,S}&i=3\\
0&\mathrm{otherwise.}
\end{array}
\right.
\]
\end{lem}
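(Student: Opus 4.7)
The plan is to combine two Thom isomorphisms—one for the pair $(N, X\setminus S)$ based on the fact that $E^*:=N\cap E_\varphi$ is a codimension-$2$ submanifold of $N$, and one for the pair $(X\setminus \Sing\,S, X\setminus S)$ based on the fact that $S^*:=S\setminus\Sing\,S$ is a codimension-$2$ submanifold of $X\setminus\Sing\,S$.

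Let $S':=\varphi^{-1}_*(S)$ and $E:=E_\varphi$. Via the isomorphism $\mathrm{Bl}_rX\setminus E\xrightarrow{\cong}X\setminus r$ induced by $\varphi$, the open set $N\setminus E^*$ is identified with $X\setminus S$. Since $E^*$ is a closed smooth submanifold of $N$ of real codimension $2$, the tubular neighborhood theorem gives $H_i(N,X\setminus S)\cong H_{i-2}(E^*)$. Feeding this into the long exact sequence of the pair $(N,X\setminus S)$ together with the contractibility of $N$ yields
\[
H_i(X\setminus S)\cong H_{i-1}(E^*)\text{ for }i\geq1,\qquad H_0(X\setminus S)\cong\Z.
\]

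The heart of the argument is to show $E^*\simeq r^*:=r\setminus(r\cap\Sing\,S)$. Over $r^*$, the surface $S$ is smooth, and $S'\cap E$ restricts to a section of the $\P^1$-bundle $E\to r$, so $E^*|_{r^*}$ is an $\A^1$-bundle over $r^*$ and hence homotopy equivalent to $r^*$. For each $p\in r\cap\Sing\,S$, I claim $S'\supset E|_p$ set-theoretically, so that $E^*|_p=\emptyset$. Identifying $\varphi|_{S'}\colon S'\to S$ with the blow-up of $S$ along $I_r\cdot\mathcal{O}_S$, it suffices to show that $I_r\cdot\mathcal{O}_{S,p}$ is not principal. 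If it were, say $I_r=(f)$ locally, then $\mathcal{O}_{S,p}/(f)\cong\mathcal{O}_{r,p}$ would be regular of dimension $1$; a dimension count on $\mathfrak{m}_p/\mathfrak{m}_p^2$ then forces $f\notin\mathfrak{m}_p^2$ and $\dim\mathfrak{m}_p/\mathfrak{m}_p^2=2$, i.e.\ $\mathcal{O}_{S,p}$ itself is regular, contradicting $p\in\Sing\,S$. Hence the fiber of $S'\to S$ above $p$ is positive-dimensional inside $E|_p\cong\P^1$, so set-theoretically equals all of $E|_p$.

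Finally, $S^*$ is a closed smooth codimension-$2$ submanifold of $X\setminus\Sing\,S$ with complement $X\setminus S$, so the Thom isomorphism yields $H_i(X\setminus\Sing\,S,X\setminus S)\cong H_{i-2}(S^*)$. Since $X$ is a smooth contractible $6$-manifold, $X\setminus\Sing\,S$ is homotopy equivalent to a wedge of $\sharp\Sing\,S$ copies of $S^5$, so its homology is $\Z$ in degree $0$, $\Z^{\sharp\Sing\,S}$ in degree $5$, and $0$ elsewhere. Substituting these together with the previously computed $H_*(X\setminus S)$ (namely $\Z$ in degrees $0,1$, $H_1(r^*)$ in degree $2$, and $0$ in degrees $\geq 3$) into the long exact sequence of $(X\setminus\Sing\,S,X\setminus S)$ and reading off terms degree by degree yields the claimed formula. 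The main obstacle is the non-Cartier-ness claim in the middle step, which is where the normality hypothesis on $S$ enters crucially.
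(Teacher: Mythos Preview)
Your proof is correct and follows essentially the same route as the paper: both arguments use the Thom isomorphism for the pair $(X\setminus\Sing S,\,X\setminus S)$ together with a comparison to the pair $(N,\,N\setminus E^*)$, exploiting that $E^*$ is an $\A^1$-bundle over $r\setminus(r\cap\Sing S)$. The paper packages the two long exact sequences into a single commutative ladder, whereas you compute $H_*(X\setminus S)$ first and then plug it into the second sequence, but the content is the same. One point worth noting: the paper simply asserts the identity $N=\bigl(\mathrm{Bl}_{r\setminus(r\cap\Sing S)}(X\setminus\Sing S)\bigr)\setminus\varphi^{-1}_*(S\setminus\Sing S)$, which implicitly uses that $E|_p\subset\varphi^{-1}_*(S)$ for every $p\in r\cap\Sing S$; you actually supply the argument for this (the ideal of $r$ in $S$ cannot be principal at a singular point of $S$), so your write-up is in fact more complete on this step.
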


\begin{proof}
Let $M\coloneqq {X \setminus \Sing\,S}, S'\coloneqq S \setminus \Sing\,S$ and $E\coloneqq E_{\varphi} \cap N$. Note that $N=\left(\mathrm{Bl}_{r \setminus (r \cap \mathrm{Sing}\:S)}M \right) \setminus \varphi^{-1}_*(S')$.
Applying the Thom isomorphism to the pair $(X, \Sing\,S)$, we get the following exact sequence:
\[
\xymatrix@=10pt{
\cdots \ar [r]&{H_{i-5}(\Sing\,S, \Z)} \ar [r]&{H_{i}(M, \Z)} \ar [r]&H_{i}(X, \Z) \ar [r]& {H_{i-6}(\Sing\,S, \Z)} \ar [r] & {\cdots}. 
}
\]
Note that $N$ is contractible by the assumption and $E$ is an $\A^1$-bundle over $r \setminus (r \cap \Sing\,S)$. Thus we can calculate the singular homologies of $M$, $N$, and $E$. Then we have
\begin{eqnarray*}
\begin{split}
&
H_i(M, \Z)=
\left\{
\begin{array}{ll}
\Z &i=0\\
\Z^{\sharp\Sing\,S}&i=5\\
0&i \colon \mathrm{otherwise},
\end{array}
\right.
H_i(N, \Z)=
\left\{
\begin{array}{ll}
\Z &i=0\\
0&i \colon \mathrm{otherwise},
\end{array}
\right.
\\
&
H_i(E, \Z)=
\left\{
\begin{array}{ll}
\Z &i=0\\
H_1(r \setminus (r \cap \Sing\,S), \Z)&i=1\\
0&i \colon \mathrm{otherwise}.
\end{array}
\right. \hfill (*)
\end{split}
\end{eqnarray*}
As $\varphi\restriction_{N \setminus E} \colon N \setminus E \rightarrow M \setminus S'$ is an isomorphism, the Thom isomorphism to the pairs $(M, S')$ and $(N, E)$ gives us the following commutative diagram with the exact rows:
\[
\xymatrix@=10pt{
{\cdots} \ar [r]&{H_{i-1}(E, \Z)} \ar [r] \ar [d]&{H_{i}(N \setminus E, \Z)} \ar [r] \ar [d]^{\wr}&{H_{i}(N, \Z)} \ar [r] \ar [d]& {H_{i-2}(E, \Z)} \ar [r] \ar [d]&{\cdots}\\
{\cdots}\ar [r]&{H_{i-1}(S', \Z)} \ar [r]&{H_{i}(M \setminus S', \Z)} \ar [r]&{H_{i}(M, \Z)} \ar [r]& {H_{i-2}(S', \Z)} \ar [r] &{\cdots}. 
}
\]
Substituting the terms of the commutative diagram by $(*)$, we have the assertion.
\end{proof}

\begin{lem}\label{lem:2.17}
Let $f\colon \wt{X} \rightarrow X$ be a proper morphism of projective algebraic varieties such that its restriction $f^{-1}(U) \rightarrow U$ over a dense open subset $U$ is isomorphic. Let $Y \coloneqq X \setminus U$ and $\wt{Y} \coloneqq f^{-1}(Y)$, then there exists an exact sequence of cohomologies:
\[
\xymatrix@=10pt{
\cdots \ar [r]
&{H^{i}(X, \Z)} \ar [r]
&H^{i}(Y, \Z) \oplus H^{i}(\wt{X}, \Z) \ar [r] 
&{H^{i}(\wt{Y}, \Z)} \ar [r]
&{H^{i+1}(X, \Z)} \ar [r]
& {\cdots}. 
}
\]
\end{lem}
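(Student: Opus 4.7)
The plan is to obtain the desired long exact sequence by comparing the cohomology long exact sequences of the pairs $(X,Y)$ and $(\widetilde X,\widetilde Y)$, linked by the morphism of pairs $f\colon(\widetilde X,\widetilde Y)\to(X,Y)$. First I would write down the commutative ladder consisting of these two long exact sequences together with the vertical pullback maps $f^*$ and $(f|_{\widetilde Y})^*$. The leftmost vertical map is $f^*\colon H^i(X,Y)\to H^i(\widetilde X,\widetilde Y)$, and the key claim I need to establish is that this map is an isomorphism for every $i$.

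Granting that isomorphism, the standard braid-diagram (or mapping cone) chase splices the two rows into a single long exact sequence. The arrow $H^i(X)\to H^i(Y)\oplus H^i(\widetilde X)$ is $\alpha\mapsto(i^*\alpha,f^*\alpha)$, and $H^i(Y)\oplus H^i(\widetilde X)\to H^i(\widetilde Y)$ is $(\beta,\gamma)\mapsto (f|_{\widetilde Y})^*\beta-\widetilde i^*\gamma$ (where $i,\widetilde i$ are the inclusions). The connecting map $H^i(\widetilde Y)\to H^{i+1}(X)$ is the composite of $H^i(\widetilde Y)\to H^{i+1}(\widetilde X,\widetilde Y)$, the inverse of $f^*\colon H^{i+1}(X,Y)\xrightarrow{\sim} H^{i+1}(\widetilde X,\widetilde Y)$, and $H^{i+1}(X,Y)\to H^{i+1}(X)$ from the pair sequence. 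Exactness at each spot follows routinely from the five lemma applied to the ladder.

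To prove the key isomorphism $f^*\colon H^i(X,Y)\cong H^i(\widetilde X,\widetilde Y)$, I would exploit that $f$ is proper and restricts to a homeomorphism over $U=X\setminus Y$. In the analytic topology, $X$ and $\widetilde X$ are compact CW pairs, so $Y$ admits a regular closed neighborhood $N\subset X$ deformation retracting onto $Y$. Since $f$ is proper, $f^{-1}(N)$ is a closed neighborhood of $\widetilde Y$, and because $f$ is a homeomorphism off $\widetilde Y$, the retraction on $N\setminus Y$ lifts to a deformation retraction of $f^{-1}(N)$ onto $\widetilde Y$. Excision then reduces both sides to the cohomology of the common pair $(X\setminus Y,\,N\setminus Y)\cong(\widetilde X\setminus\widetilde Y,\,f^{-1}(N)\setminus\widetilde Y)$, yielding the isomorphism. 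A cleaner sheaf-theoretic alternative is to apply proper base change to the adjunction morphism $\Z_X\to Rf_*\Z_{\widetilde X}$: its cone $G$ is supported on $Y$ since $f$ is an isomorphism over $U$, and proper base change identifies $i^*G$ with the cone of $\Z_Y\to R(f|_{\widetilde Y})_*\Z_{\widetilde Y}$, so passing to hypercohomology produces the asserted sequence directly.

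The main obstacle is the identification of the relative cohomologies; everything else is formal homological algebra once this isomorphism is in hand. Properness of $f$ is indispensable---without it the retraction argument (or equivalently the proper base change step) fails, and the complement $U$ would no longer capture the relative cohomology of either pair.
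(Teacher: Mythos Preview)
The paper does not actually prove this lemma; it merely cites \cite[Lemma 8.1.7]{ishi14} and says the same method applies. Your argument is one of the standard proofs of this Mayer--Vietoris type sequence and is essentially correct.

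One step is under-justified: the claim that a deformation retraction of $N$ onto $Y$ lifts to a deformation retraction of $f^{-1}(N)$ onto $\widetilde Y$. The homotopy $H_t$ on $N\setminus Y$ lifts through the homeomorphism $f^{-1}(N)\setminus\widetilde Y\cong N\setminus Y$ only as long as the track stays in $N\setminus Y$; when a point enters $Y$ you must choose a preimage in $\widetilde Y$, and continuity of that extension is not automatic. A cleaner way to obtain $f^*\colon H^i(X,Y)\xrightarrow{\sim}H^i(\widetilde X,\widetilde Y)$ is to note that projective varieties are compact and triangulable, so $(X,Y)$ and $(\widetilde X,\widetilde Y)$ are compact CW pairs; the induced map $\widetilde X/\widetilde Y\to X/Y$ is then a continuous bijection of compact Hausdorff spaces, hence a homeomorphism, and the relative cohomologies are identified with the reduced cohomologies of these quotients. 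Your sheaf-theoretic alternative via proper base change is also perfectly valid and is in fact the argument closest in spirit to the one in Ishii's book.
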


\begin{proof}
This follows by the same method as in the proof of \cite[Lemma 8.1.7]{ishi14}.
\end{proof}

\begin{defi}
For a normal surface singularity $(X,x)$, $\pi_{X, x} \coloneqq \displaystyle{\varprojlim_{U}}$ $ \pi_1(U \setminus \{ x \})$ is called \emph{the local fundamental group} of $(X, x)$. We say that $\pi_{X, x}$ is $\textit{perfect}$ if its abelianization is trivial.
\end{defi}

\begin{thm}[cf. {\cite[Satz 2.8]{br68}}]\label{thm:2.20} Let $(X, x)$ be a normal surface singularity. Then the following are equivalent:
\begin{enumerate}
\item[\textup{(1)}] $(X, x)$ is a quotient singularity.
\item[\textup{(2)}] The local fundamental group $\pi_{X, x}$ is finite.
\end{enumerate}
\end{thm}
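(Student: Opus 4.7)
The plan is to treat the two implications separately. Direction (1) $\Rightarrow$ (2) will follow essentially from the simply-connectedness of $\C^2 \setminus \{0\}$, while the substantive content lies in (2) $\Rightarrow$ (1), which I would prove by promoting the finite local fundamental group into a genuine finite group action on a smooth germ via a Galois covering.

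For (1) $\Rightarrow$ (2) I would argue as follows. Assume $(X, x) \cong (\C^2/G, 0)$ for a finite \emph{small} subgroup $G \subset \GL(2, \C)$ (i.e., containing no pseudo-reflections), so that $G$ acts freely on $\C^2 \setminus \{0\}$; one may reduce to this case by Chevalley--Shephard--Todd, since pseudo-reflections contribute only to a smooth quotient. Then a sufficiently small punctured neighborhood $U \setminus \{x\}$ of $x$ in $X$ is covered by a small punctured neighborhood $V \setminus \{0\}$ of the origin in $\C^2$ as an unramified Galois cover with group $G$. Since $V \setminus \{0\}$ is homotopy equivalent to $S^{3}$ and in particular simply connected, $\pi_{1}(U \setminus \{x\}) \cong G$, and passing to the inverse limit over shrinking neighborhoods yields $\pi_{X,x} \cong G$, which is finite.

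For (2) $\Rightarrow$ (1) the strategy is as follows. Choose a Stein neighborhood $U$ of $x$ small enough that $\pi_{1}(U \setminus \{x\}) = \pi_{X,x}$, and set $G \coloneqq \pi_{X,x}$, finite by hypothesis. First, form the universal (hence finite Galois, with group $G$) unramified cover $p^{\circ} \colon \wt{U}^{\circ} \to U \setminus \{x\}$. Next, invoke Grauert--Remmert's theorem on extension of finite \'etale covers across codimension $\geq 2$ subsets to promote $p^{\circ}$ to a finite normal analytic covering $p \colon \wt{U} \to U$ ramified only at $x$. The preimage of $x$ is then a single normal surface singularity $\wt{x} \in \wt{U}$, and by construction its link is simply connected. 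The crucial step will be to conclude that $(\wt{U}, \wt{x})$ is in fact smooth; this is the content of Mumford's theorem, asserting that a normal complex surface singularity with simply connected link is necessarily a smooth point. Granting this, $(\wt{U}, \wt{x}) \cong (\C^{2}, 0)$, and $(X, x) \cong (\wt{U}/G, x)$ exhibits $(X, x)$ as a quotient singularity.

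The main obstacle is precisely invoking Mumford's theorem. Its proof proceeds by taking a good resolution $\pi \colon Y \to \wt{U}$, realizing the link as a plumbed $3$-manifold over the dual graph of the exceptional divisor, and showing via an intersection-theoretic argument that this link can be simply connected only when the graph is empty, i.e.\,when the singularity is smooth. Without this deep topological ingredient, the Galois-cover construction still produces a finite cover of $(X, x)$ by a normal germ with trivial local fundamental group, but one cannot identify that cover with $(\C^{2}, 0)$ and hence cannot close the implication.
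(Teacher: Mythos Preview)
The paper does not supply a proof of this theorem; it is quoted as a known result from Brieskorn \cite{br68} and used as a black box in the proof of Proposition~\ref{prop:5.2}. Your outline is the standard argument behind Brieskorn's theorem: the easy direction via Chevalley--Shephard--Todd and the simply-connectedness of $\C^2\setminus\{0\}$, and the hard direction via Grauert--Remmert extension of the universal cover of the punctured neighborhood followed by Mumford's theorem that a normal surface singularity with simply connected link is smooth. That is exactly the content of \cite[Satz~2.8]{br68}, so there is nothing to compare against in the present paper---your sketch is correct and matches the classical proof.
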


\begin{thm} [cf. \cite{br68}, {\cite[Theorem1.4(a)]{b-d89}}]\label{thm:2.21}
Let $(X, x)$ be a rational double point. Then the following are equivalent:
\begin{enumerate}
\item[\textup{(1)}] $(X, x)$ is an $E_8$-singularity.
\item[\textup{(2)}] The local fundamental group $\pi_{X, x}$ is perfect.
\end{enumerate}
\end{thm}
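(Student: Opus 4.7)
The plan is to combine Theorem \ref{thm:2.20} with the Du Val classification of rational double points as Klein quotient singularities $\C^2/G$, where $G$ is a finite subgroup of $\SL(2,\C)$. Under this identification, the local fundamental group $\pi_{X,x}$ is canonically isomorphic to $G$ itself. Indeed, a small punctured neighborhood of $x$ in $\C^2/G$ deformation retracts onto the link $S^3/G$; since $G \sb \SL(2,\C)$ acts freely on $\C^2 \setminus \{0\}$ (hence on $S^3$), $\pi_1(S^3/G) \cong G$. Equivalently, one may obtain $\pi_{X,x} \cong G$ by applying Theorem \ref{thm:2.20} to conclude finiteness and then matching the resulting quotient presentation with the Du Val table.

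Given this, the proof reduces to a purely group-theoretic computation: determine which of the finite subgroups of $\SL(2,\C)$ arising in the ADE classification are perfect. Recall that these are, up to conjugacy, the cyclic group $\Z/(n+1)$ (type $A_n$), the binary dihedral group of order $4(n-2)$ (type $D_n$, $n \geq 4$), and the binary tetrahedral, binary octahedral, and binary icosahedral groups (types $E_6$, $E_7$, $E_8$ respectively).

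Next I would compute the abelianization in each case from the standard presentations. For $A_n$ the group is already abelian, equal to $\Z/(n+1)$, hence non-trivial. For $D_n$ one obtains $\Z/4$ when $n$ is odd and $(\Z/2)^2$ when $n$ is even. The binary tetrahedral group has abelianization $\Z/3$, the binary octahedral group has abelianization $\Z/2$, while the binary icosahedral group, isomorphic to $\SL(2,\F_5)$, is its own commutator subgroup and hence perfect. Thus $\pi_{X,x}$ is perfect exactly when $(X,x)$ is an $E_8$-singularity, giving both implications.

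The only delicate point is the identification $\pi_{X,x} \cong G$ for a Klein quotient; everything else is a finite table of abelianizations that can be read off from standard presentations. Alternatively, one may invoke \cite{br68} and \cite{b-d89} once the ADE classification of $\pi_{X,x}$ is in hand, reducing the argument to the single observation that the binary icosahedral group is the unique nontrivial perfect finite subgroup of $\SL(2,\C)$.
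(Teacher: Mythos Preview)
Your argument is correct. The identification of $\pi_{X,x}$ with the finite group $G$ in the Klein quotient presentation is standard, and your case-by-case computation of the abelianizations is accurate; in particular the binary icosahedral group is indeed the unique perfect group on the list.

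However, the paper does not prove this theorem at all: it is stated with the citation ``cf.\ \cite{br68}, \cite[Theorem 1.4(a)]{b-d89}'' and used as a black box in the proof of Proposition \ref{prop:5.2}. So there is no proof in the paper to compare against; you have supplied a correct self-contained argument where the paper simply defers to the literature.
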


\begin{thm} [cf. \cite{kptr89}]\label{thm:2.22}
Let $G$ be a reductive algebraic group acting algebraically on a contractible affine variety $X$. Then the algebraic quotient $X /\!/ G$ is also contractible. 
\end{thm}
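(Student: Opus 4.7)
The plan is to combine Kempf--Ness theory, which reduces the question to a topological statement about compact group actions, with the structural rigidity of algebraic actions to conclude.

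First I would fix a maximal compact subgroup $K \sb G$ and equivariantly embed $X$ as a closed $G$-invariant subvariety of a finite-dimensional $G$-module $V$ equipped with a $K$-invariant Hermitian inner product. Letting $N \sb X$ denote the Kempf--Ness set --- the locus of points $x \in X$ at which $\|\cdot\|^2$ attains its minimum on the $G$-orbit $G \cdot x$ --- the theorems of Kempf--Ness, Neeman and Schwarz tell us that $N$ is closed and $K$-invariant, that the negative gradient flow of $\|\cdot\|^2$ realises $N$ as a $K$-equivariant strong deformation retract of $X$, and that the composition $N \hra X \epm X /\!/ G$ induces a homeomorphism $N/K \xrightarrow{\sim} X /\!/ G$. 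In particular $N$ is contractible as a topological space, so the problem reduces to showing that $N/K$ is contractible.

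Next I would secure a $K$-fixed point $x_0 \in N^K$. Applying the Conner--Floyd/Borel formula $\chi(N^T) = \chi(N) = 1$ to a maximal torus $T \sb K$ gives $N^T \neq \emp$; an inductive argument using the action of the Weyl group $N_K(T)/T$ on $N^T$ (together with the same formula applied to the components of $K$) then yields an honest $K$-fixed point. Moreover, since $N$ comes from an algebraic action, Luna's slice theorem supplies a stratification of $N$ by orbit types with only finitely many strata, each locally of the form $K \times^H S$ for a linear slice representation $S$ of the stabilizer $H$.

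The hard part will be to promote these two ingredients --- a fixed point and the stratified local model --- into a strong deformation retraction of $N/K$ onto $\{[x_0]\}$, since for a generic continuous action of a compact group on a contractible space the orbit space may carry nontrivial homotopy. I would carry this out by induction on orbit type: on each slice, the radial straight-line retraction to the origin is $K$-equivariant, and these local retractions can be patched across strata using a $K$-invariant partition of unity subordinate to the equivariant tubular neighborhoods produced by the slice theorem. The resulting $K$-equivariant contraction of $N$ to $\{x_0\}$ then descends to a strong deformation retraction of $X /\!/ G \cong N/K$ to a point, giving the desired contractibility.
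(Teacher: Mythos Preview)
The paper does not prove this statement; it is quoted from \cite{kptr89} without argument, so there is no in-paper proof to compare against. Your Kempf--Ness/Neeman/Schwarz reduction to the assertion ``$N$ contractible $\Rightarrow$ $N/K$ contractible'' (with $N$ the Kempf--Ness set, a $K$-equivariant strong deformation retract of $X$) is correct and is the standard opening move.

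The genuine gap is in your final paragraph. Each radial retraction in a Luna slice collapses the tube $K\times^{H}S$ onto its central orbit $K/H$; the ``origin'' of the slice is the orbit through the slice point, not your global fixed point $x_0$. Patching such maps by a $K$-invariant partition of unity therefore produces, at best, a $K$-equivariant deformation of $N$ into a union of lower orbit-type strata --- not a contraction of $N$ to $\{x_0\}$. If you try to iterate (push each stratum into the next lower one), you must show that every intermediate target is again contractible, but after one step you have left the class of Kempf--Ness sets and your hypotheses no longer visibly persist; and in any case ``retract a space onto its lower stratum'' is generally impossible (a disk does not retract onto its boundary). This passage from ``$N$ contractible'' to ``$N/K$ contractible'' is precisely where the substantive work in \cite{kptr89} lies, and a soft partition-of-unity argument cannot substitute for it. A smaller point: once you want to pass from the maximal torus to all of $K$ via the Weyl group, the bare Euler-characteristic identity $\chi(N^{T})=\chi(N)=1$ is not enough; what one actually uses is Smith theory to get that $N^{T}$ is acyclic, and then iterates.
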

\section{Boundary divisors}\label{sec:boundary}
In this section, we study properties of the boundary divisors of triplets with $(\dagger)$.
\begin{nota}\label{no:Fano} For a Fano 3-fold $V$ with $B_2=2$, we use the following notation for $i=1, 2$:
\begin{itemize} 
\item $\varphi_i$: the extremal contractions of $V$.
\item $W_i$: the image of $\varphi_i$.
\item $H_i$: the pullback of the ample generator of $\Pic W_i$.
\item $\mu_i$: the length of $\varphi_i$, i.e. $\mu_i\coloneqq \mathrm{min}\{(-K_V \cdot l) \in \N \:|\:l:$ a curve such that $\varphi_i(l) \textup{ is a point} \}$.
\item $l_i \subset V$: a curve such that $(-K_V \cdot l_i)=\mu_i$ and $\varphi_i(l) ${ is a point}. 
\end{itemize}
\[
\xymatrix@=10pt{
&V \ar[dl]_-{\varphi_1} \ar[dr]^-{\varphi_2}
&\\
W_1
&
&W_2
}
\]
\end{nota}

\begin{thm}[cf. {\cite[Theorem 5.1]{m-m83}}]\label{thm:2.3} With Notation \ref{no:Fano}, we have the following:
\begin{enumerate}
\item[\textup{(1)}] $\Pic V=\Z[H_{1}] \oplus \Z[H_{2}]$.
\item[\textup{(2)}] $-K_V \sim \mu_{1} H_{2} + \mu_{2} H_1$.
\item[\textup{(3)}] $(H_i \cdot l_j)=1- \delta_{ij}$ for all $i,j$, where $\delta_{ij}$ is Kronecker's delta.
\end{enumerate}

\end{thm}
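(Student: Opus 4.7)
My plan is to prove (1), (2), (3) together, with (3) as the substantive input.

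For (1), I would first note $\Pic V \cong \Z^2$: Kodaira vanishing gives $H^i(V, \mc{O}_V) = 0$ for $i \geq 1$, so the exponential sequence yields $\Pic V \cong H^2(V, \Z)$; simple connectedness of smooth Fano manifolds makes this torsion-free, so $\Pic V \cong \Z^2$. Granting (3), every $L \in \Pic V$ equals $(L \cdot l_2) H_1 + (L \cdot l_1) H_2$: the difference has zero intersection with both $l_1$ and $l_2$, hence is numerically trivial, hence zero in the torsion-free lattice $\Pic V$. Applying this to $L = -K_V$ and using $(-K_V \cdot l_i) = \mu_i$ yields (2).

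The substantive content is (3). The vanishing $(H_i \cdot l_i) = 0$ is immediate since $\varphi_i$ contracts $l_i$ and $H_i = \varphi_i^* L_i'$, where $L_i'$ denotes the ample generator of $\Pic W_i$. For $i \neq j$, positivity $(H_i \cdot l_j) > 0$ follows since $l_j$ is not $\varphi_i$-contracted and $L_i'$ is ample on $W_i$. To pin the value down to $1$, I would verify (i) that $H_i$ is primitive in $\Pic V$, using the contraction theorem to identify $\varphi_i^*(\Pic W_i)$ with $\{L \in \Pic V : (L \cdot l_i) = 0\}$ together with the classification of extremal contractions of smooth 3-folds (which realizes $W_i$ as $\P^1$, $\P^2$, or a smooth Fano 3-fold with $B_2=1$, each having torsion-free Picard group of rank one), and (ii) that $l_j$ is primitive in $N_1(V)$, using Mori's extremal rational curve theorem together with the minimality built into the choice of $l_j$. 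Given these primitivities, unimodularity of the intersection pairing $\Pic V \times N_1(V) \to \Z$ forces $(H_1 \cdot l_2)(H_2 \cdot l_1) = 1$, whence both factors equal $1$.

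The main obstacle I expect is the unimodularity of the integer intersection pairing: while $\Pic V$ and $N_1(V) \otimes \Q$ are rationally dual, duality of the integer lattices is not automatic. The approach in the cited Mori--Mukai reference bypasses this by case analysis on the possible pairs of extremal contractions $(\varphi_1, \varphi_2)$, exhibiting in each case a curve in the $l_j$-ray of $H_i$-degree exactly one.
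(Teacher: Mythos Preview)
The paper gives no proof of this statement; it is quoted directly from Mori--Mukai \cite[Theorem~5.1]{m-m83}. So there is nothing to compare your proposal against except the original reference, whose argument you already describe accurately in your final paragraph: a case analysis over the possible types of extremal contractions.

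Your reduction of (1) and (2) to (3) is clean and correct, as are the easy parts of (3). You have also correctly located the real difficulty: the perfectness of the integer pairing between $\Pic V$ and $N_1(V)_{\Z}$. One further subtlety in your sketch deserves a flag. Your proposed justification that $l_j$ is primitive in $N_1(V)$---``Mori's extremal rational curve theorem together with the minimality built into the choice of $l_j$''---does not quite do the job. Minimality of $(-K_V\cdot l_j)$ among curves contracted by $\varphi_j$ only says $[l_j]$ is indivisible \emph{within the ray} $R_j$, not that it is indivisible in the full lattice $N_1(V)_{\Z}$; a priori $[l_j]$ could equal $2\alpha$ for some class $\alpha$ lying outside the closed cone of curves. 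To get genuine primitivity one again falls back on inspecting each contraction type (for a $\P^1$-bundle or a blow-up the fiber class is visibly a direct summand of $H_2(V,\Z)$, and so on), which is exactly the Mori--Mukai case analysis you were hoping to avoid.

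If you want a route that bypasses the case analysis entirely, note that for a smooth Fano threefold $\Pic V\cong H^2(V,\Z)$ is canonically dual to $H_2(V,\Z)$ (both torsion-free since $H_1=0$), so the pairing is unimodular provided $N_1(V)_{\Z}\to H_2(V,\Z)$ is surjective; this is the integral Hodge conjecture for $1$-cycles on uniruled threefolds, established by Voisin well after \cite{m-m83}. That gives a uniform (if anachronistic) proof, but for the purposes of this paper the citation suffices.
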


In the remaining of this section, we fix a triplet $(V, D_1 \cup D_2, U)$ with $(\dagger)$.

\begin{lem}[cf. {\cite[Lemma.2.1]{ki05}}]\label{lem:2.1}
It holds that $\Pic V=\Z[D_1] \oplus \Z[D_2]$.
\end{lem}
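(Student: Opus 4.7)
The plan is to combine the standard localization exact sequence for the open immersion $U \hookrightarrow V$ with the contractibility of $U$. Since $V$ is smooth, every Weil divisor is Cartier, and any line bundle on $V$ whose restriction to $U$ is trivial is isomorphic to $\mc{O}_V(aD_1+bD_2)$ for some integers $a,b$. Thus one has an exact sequence
\[
\Z[D_1] \oplus \Z[D_2] \xrightarrow{\alpha} \Pic V \to \Pic U \to 0,
\]
where $\alpha(a,b)=a[D_1]+b[D_2]$. If I can show that $\Pic U=0$ and that $\Pic V$ is free abelian of rank $2$, then the conclusion will be immediate.

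First I would verify $\Pic U=0$. As $U$ is open in the smooth variety $V$, it is itself smooth; by hypothesis it is also affine, so Serre vanishing gives $H^i(U,\mc{O}_U)=0$ for every $i\geq 1$. The exponential exact sequence on $U$ therefore collapses to an isomorphism $\Pic U \cong H^2(U,\Z)$. The contractibility of $U$ yields $H^2(U,\Z)=0$, hence $\Pic U=0$. This forces $\alpha$ to be surjective, so $[D_1]$ and $[D_2]$ generate $\Pic V$ as an abelian group.

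To finish, I would use that $V$ is a smooth Fano $3$-fold with $B_2=2$. Kodaira vanishing gives $H^1(V,\mc{O}_V)=H^2(V,\mc{O}_V)=0$, so the exponential sequence on $V$ provides $\Pic V \cong H^2(V,\Z)$. This group is torsion-free (a fact already invoked in the Introduction, ultimately coming from simple connectedness of Fano manifolds and the universal coefficient theorem) and of rank $B_2(V)=2$. Since any surjection between free abelian groups of the same finite rank is an isomorphism (because $\Z^2$ is Hopfian), the surjection $\Z[D_1]\oplus\Z[D_2]\twoheadrightarrow \Pic V$ is an isomorphism, giving the desired conclusion.

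The only step carrying genuine content is the vanishing of $\Pic U$, which is immediate from contractibility and affineness; the remainder of the argument is linear algebra in $\Z^{2}$ together with standard facts about Picard groups of Fano threefolds. For this reason the proof amounts essentially to a direct citation of the corresponding lemma in \cite{ki05}.
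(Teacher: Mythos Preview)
Your argument is correct: the localization sequence together with $\Pic U=0$ (from affineness and contractibility) gives surjectivity, and the Hopfian property of $\Z^2$ together with $\Pic V\cong H^2(V,\Z)\cong\Z^2$ gives injectivity. The paper itself gives no proof at all, merely citing \cite[Lemma~2.1]{ki05}; your write-up is exactly the standard expansion of that citation, as you yourself note in your final paragraph.
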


\begin{lem}\label{lem:2.5}
Take $m_{ij} \in \Z$ such that $D_i \sim m_{i1} H_1 + m_{i2} H_2$. Then $m_{ij} \geq 0$ for all $i,j$.
\end{lem}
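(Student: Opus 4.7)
The plan is to translate the inequality $m_{ij}\geq 0$ into a nefness statement for $D_i$, reduce a potential failure to the case where $D_i$ coincides with an exceptional divisor $E_j$, and then derive a contradiction from Lemma~\ref{lem:2.1}.

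First I would invoke Theorem~\ref{thm:2.3}(3), which gives $(H_k\cdot l_j)=1-\delta_{kj}$. Intersecting the relation $D_i\sim m_{i1}H_1+m_{i2}H_2$ with $l_1$ and $l_2$ then yields $m_{i2}=(D_i\cdot l_1)$ and $m_{i1}=(D_i\cdot l_2)$, so the conclusion of the lemma is equivalent to the nefness of $D_1$ and $D_2$. Suppose for contradiction that $(D_i\cdot l_j)<0$ for some $i,j$. Then every irreducible curve numerically equivalent to $l_j$ must be contained in the prime divisor $D_i$; the locus they sweep out is all of $V$ when $\varphi_j$ is of fiber type (impossible as $D_i\subsetneq V$), and is exactly the exceptional divisor $E_j$ when $\varphi_j$ is divisorial. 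Hence $\varphi_j$ is divisorial and $E_j\subset D_i$, so by irreducibility $D_i=E_j$.

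Finally I use Lemma~\ref{lem:2.1} to exclude $D_i=E_j$. Combined with Theorem~\ref{thm:2.3}(1), Lemma~\ref{lem:2.1} asserts that $(m_{ij})$ is a change-of-basis matrix between two $\Z$-bases of $\Pic V$, hence $\det(m_{ij})=\pm 1$. Assume without loss of generality $D_1=E_1$ and write $E_1=aH_1+bH_2$. Since $E_1$ is $\varphi_1$-exceptional, $b=(E_1\cdot l_1)\leq -1$; moreover $a=(E_1\cdot l_2)\geq 0$ because a generic representative of $[l_2]$ does not lie in $E_1$, and the value $a=0$ is excluded because otherwise the effective divisor $E_1\sim bH_2$ would lie in the non-pseudo-effective ray $\R_{<0}H_2$. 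Thus $a\geq 1$. Using $(\dagger)$ and Theorem~\ref{thm:2.3}(2), $D_2=-K_V-E_1=(\mu_2-a)H_1+(\mu_1-b)H_2$, and a direct computation gives
\[
\det(m_{ij})=a(\mu_1-b)-b(\mu_2-a)=a\mu_1-b\mu_2\geq \mu_1+\mu_2\geq 2,
\]
contradicting $\det(m_{ij})=\pm 1$. The remaining cases $D_1=E_2$, $D_2=E_1$, $D_2=E_2$ are handled by the same computation with indices permuted. The main obstacle is establishing the strict positivity $a\geq 1$ rather than merely $a\geq 0$; this requires separating effectivity from pseudo-effectivity for classes on $V$, which is precisely where the contractibility of $U$, mediated through Lemma~\ref{lem:2.1}, does the nontrivial work.
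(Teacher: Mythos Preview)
Your proof is correct and ultimately reaches the same contradiction (the change-of-basis matrix $(m_{ij})$ has $|\det|\geq 2$), but the route differs from the paper's. The paper argues purely numerically: assuming $m_{11}<0$, effectivity of the nonzero divisor $D_1$ forces $m_{12}>0$; then $(\dagger)$ combined with Theorem~\ref{thm:2.3} gives $m_{11}+m_{21}\geq 1$ and $m_{12}+m_{22}\geq 1$, and a three-line estimate yields $m_{11}m_{22}-m_{12}m_{21}\leq m_{11}-m_{12}\leq -2$, contradicting Lemma~\ref{lem:2.1}. Your approach instead interprets a negative $m_{ij}$ as negativity of $D_i$ on the extremal ray $\R_{\geq 0}[l_j]$, deduces that $\varphi_j$ must be divisorial with $D_i=E_j$, and only then computes the determinant. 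This buys the structural insight that the only way the lemma could fail is if a boundary divisor were extremal-exceptional, at the cost of invoking the fiber-type/divisorial dichotomy and the non-pseudo-effectivity of $-H_2$ (to rule out $a=0$). The paper's argument is shorter and uses nothing beyond effectivity of $D_i$, the linear-algebraic content of Theorem~\ref{thm:2.3} and Lemma~\ref{lem:2.1}, and $(\dagger)$; in particular it never needs to name an exceptional divisor.
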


\begin{proof}
 By symmetry, we have only to show $m_{11} \geq 0$. Suppose that $m_{11}<0$. Then $m_{12} > 0$ since $D_1$ is a non-zero effective divisor. By the condition $(\dagger)$ and Theorem \ref{thm:2.3} (3), we have $m_{11}+m_{21} \geq 1$ and $m_{12}+m_{22} \geq 1$. Hence we have
\[
\begin{aligned}
m_{11}m_{22}-m_{12}m_{21} &\leq m_{11}m_{22}-m_{12}(-m_{11}+1) =m_{11}(m_{22}+m_{12})-m_{12}\\
 &\leq m_{11}-m_{12} \\
 &\leq -2,
\end{aligned}
\]
which implies that the matrix $(m_{ij})_{1 \leq i,j \leq 2}$ is not unimodular. This contradicts to Theorem \ref{thm:2.3} (1) and Lemma \ref{lem:2.1}. 
\end{proof}

\begin{prop}\label{prop:2.6} We have the following:
\begin{enumerate}
\item[\textup{(1)}] For every $k \in \{1,2\}$ such that $\mu_k=1$, we have $H_k \sim D_l$ for some $l$.
\item[\textup{(2)}] If $\mu_1, \mu_2 \geq 2$, then $V \cong \P^1 \times \P^2$.
\end{enumerate}
\end{prop}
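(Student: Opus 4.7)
The plan is to reduce everything to the lattice constraints coming from $(\dagger)$ combined with the two $\Z$-basis identifications $\Pic V = \Z[H_1]\oplus\Z[H_2] = \Z[D_1]\oplus\Z[D_2]$. Writing $D_i \sim m_{i1}H_1 + m_{i2}H_2$ as in Lemma \ref{lem:2.5} makes $m_{ij}\in\Z_{\geq 0}$ and forces $\det(m_{ij})=\pm 1$, while combining $(\dagger)$ with Theorem \ref{thm:2.3}(2) yields
\[
m_{11}+m_{21}=\mu_2,\qquad m_{12}+m_{22}=\mu_1.
\]

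For (1), I take $\mu_1=1$ (the case $\mu_2=1$ is symmetric). Then $m_{12}+m_{22}=1$ with non-negative entries forces $\{m_{12},m_{22}\}=\{0,1\}$, and in each sub-case the determinant identity $m_{11}m_{22}-m_{12}m_{21}=\pm 1$ together with $m_{ij}\geq 0$ uniquely pins down the remaining entries: if $m_{22}=1$ then $m_{11}=1$ so $D_1\sim H_1$, and if $m_{12}=1$ then $m_{21}=1$ so $D_2\sim H_1$. Either way, $H_1\sim D_l$ for some $l$.

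For (2), I first appeal to Mori's cone theorem: $\mu_i \leq \dim V+1=4$, with equality forcing $V\cong\P^3$; since $B_2(V)=2$, one has $\mu_i\leq 3$ and hence $\mu_i\in\{2,3\}$. A short mod-$p$ argument on $(m_{ij})$ rules out the cases $(\mu_1,\mu_2)=(2,2)$ and $(3,3)$: when both column sums equal $2$ one has $m_{11}\equiv m_{21}$ and $m_{12}\equiv m_{22}$ mod $2$, making $\det(m_{ij})$ even, and when both equal $3$ one has $m_{11}\equiv-m_{21}$ and $m_{12}\equiv-m_{22}$ mod $3$, making $\det(m_{ij})\equiv 0\pmod 3$. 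Therefore $\{\mu_1,\mu_2\}=\{2,3\}$, and after swapping indices I may assume $\mu_2=3$.

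Next I invoke Mori's classification of extremal contractions on smooth projective $3$-folds: a length-$3$ extremal contraction on a smooth $3$-fold with $B_2\geq 2$ is necessarily a $\P^2$-bundle over $\P^1$. Hence $\varphi_2\colon V\to\P^1$ presents $V$ as $\P_{\P^1}(\mathcal{E})$ for some rank-$3$ bundle $\mathcal{E}$, which after a twist may be normalised to $\mathcal{E}\cong\mathcal{O}\oplus\mathcal{O}(a)\oplus\mathcal{O}(b)$ with $0\leq a\leq b$. Let $\xi\coloneqq\mathcal{O}_{\P(\mathcal{E})}(1)$ and let $F$ be a fibre of $\varphi_2$; since $\mathcal{E}$ is globally generated, $\xi$ is nef, and $-K_V\sim 3\xi+(2-a-b)F$. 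Using nefness of $\xi$ and $F$, the Mori cone of $V$ is spanned by the fibre-line class $[\ell_F]$ and by the class $[\sigma_{\min}]$ of the section arising from the quotient $\mathcal{E}\twoheadrightarrow\mathcal{O}$; a direct intersection computation yields $\mu_1=-K_V\cdot\sigma_{\min}=2-a-b$. The hypothesis $\mu_1\geq 2$ then forces $a=b=0$, so $\mathcal{E}\cong\mathcal{O}^{\oplus 3}$ and $V\cong\P^1\times\P^2$.

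The main obstacle is the geometric input in the last paragraph, namely extracting the $\P^2$-bundle structure from $\mu_2=3$ via Mori's structure theorem and then identifying $[\sigma_{\min}]$ as the generator of the second extremal ray with the asserted length. The lattice manipulations in (1) and the case analysis for (2) are purely combinatorial and routine once the setup is in place.
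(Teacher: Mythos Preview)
Your proof of (1) is essentially identical to the paper's: the same reduction to the unimodular matrix $(m_{ij})$ with non-negative entries and column sums $\mu_2,\mu_1$, followed by the same determinant computation.

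For (2) the approaches diverge. The paper argues in two lines: if $(\mu_1,\mu_2)\neq(2,2)$ it simply cites \cite{m-m83} to conclude $V\cong\P^1\times\P^2$, and then eliminates $(2,2)$ by the explicit case analysis $(m_{1k},m_{2k})\in\{(2,0),(0,2),(1,1)\}$, each of which makes $\det(m_{ij})$ even or zero. You instead bound $\mu_i\leq 3$ via the cone theorem, dispatch $(2,2)$ and $(3,3)$ by uniform mod-$p$ determinant arguments, and for the surviving case $\{\mu_1,\mu_2\}=\{2,3\}$ extract a $\P^2$-bundle structure from Mori's classification of extremal rays and then compute on $\P_{\P^1}(\mathcal{O}\oplus\mathcal{O}(a)\oplus\mathcal{O}(b))$ that $\mu_1=2-a-b\geq 2$ forces $a=b=0$. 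Your route is correct and more self-contained, trading the black-box citation of the Mori--Mukai classification for explicit Mori theory; the paper's route is shorter. One small point worth spelling out is the assertion that a length-$3$ extremal contraction on a smooth $3$-fold with $B_2\geq 2$ is a $\P^2$-bundle over $\P^1$: you should note that birational and conic-bundle contractions have length $\leq 2$, so the contraction is a del Pezzo fibration, and then that length $3$ forces every fiber to be $\P^2$ (any other del Pezzo fiber carries a curve of $-K$-degree $\leq 2$), hence a genuine $\P^2$-bundle.
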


\begin{proof}
Let us define $m_{ij}$ as in Lemma \ref{lem:2.5}. Note that the matrix $(m_{ij})_{1 \leq i, j \leq 2}$ is unimodular by Lemma \ref{lem:2.1} and Theorem \ref{thm:2.3} (1). We have $\mu_i=m_{1i}+m_{2i}$ for $i=1, 2$ by the condition $(\dagger)$. 

(1) We may assume that $k=1$. Then $(m_{12}, m_{22})=(1,0), (0,1)$ by Lemma \ref{lem:2.5}. If the former holds, then $\mathrm{det}(m_{ij})=-m_{21}=-1$ and hence $H_1 \sim D_2$. If the latter holds, then $H_1 \sim D_1$ by the same method. 

(2) If $(\mu_1, \mu_2) \neq (2,2)$ in addition, then $V \cong \P^1 \times \P^2$ by \cite{m-m83}. Suppose that $(\mu_1, \mu_2) = (2,2)$. Then $(m_{1k}, m_{2k})=(2,0), (0, 2)$ or $(1,1)$ for $k=1,2$. If one of the former two cases occurs, then $\mathrm{det}(m_{ij})$ is even, a contradiction. Hence $m_{ij}=1$ for all $i,j$, which implies $\mathrm{det}(m_{ij})=0$, a contradiction.
\end{proof}

The following lemma is useful.

\begin{lem}\label{lem:euler}
It holds that $\eu(D_1 \cap D_2)= \eu(D_1) +\eu(D_2)+B_3(V)-5.$
\end{lem}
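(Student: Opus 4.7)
The plan is to deduce this equality from two classical facts: additivity of the topological Euler number on a complex algebraic stratification, and the computation of the Betti numbers of a smooth Fano $3$-fold with $B_2=2$.

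First, I would use the stratification $V = U \sqcup D_1 \sqcup (D_2 \setminus D_1)$. Since $\eu$ is additive on constructible stratifications of complex algebraic varieties, and applying the same principle to $D_1 \cup D_2 = D_1 \sqcup (D_2 \setminus D_1)$, the standard inclusion–exclusion gives
\[
\eu(V) \;=\; \eu(U) + \eu(D_1) + \eu(D_2) - \eu(D_1 \cap D_2).
\]
By hypothesis $U$ is contractible, hence $\eu(U) = 1$.

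Second, I would compute $\eu(V) = 6 - B_3(V)$. Since $V$ is a Fano $3$-fold, Kodaira vanishing yields $H^1(V,\mathcal{O}_V) = 0$, and $V$ is simply connected (so $B_1(V)=0$, hence by Poincaré duality $B_5(V)=0$). By assumption $B_2(V) = 2$, and Poincaré duality then gives $B_4(V) = 2$, together with $B_0(V) = B_6(V) = 1$. Therefore
\[
\eu(V) \;=\; 1 - 0 + 2 - B_3(V) + 2 - 0 + 1 \;=\; 6 - B_3(V).
\]

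Finally, substituting into the first equation gives
\[
\eu(D_1 \cap D_2) \;=\; \eu(D_1) + \eu(D_2) + 1 - \bigl(6 - B_3(V)\bigr) \;=\; \eu(D_1) + \eu(D_2) + B_3(V) - 5,
\]
as desired. There is no real obstacle here; the only point requiring mild care is justifying additivity of $\eu$ when $D_1 \cap D_2$ may be singular or reducible, but this is a standard property of topological Euler characteristics of complex algebraic varieties (e.g. via compactly supported cohomology). The Betti-number computation uses only the standard vanishing $H^1(V,\mathcal{O}_V)=0$ and Poincaré duality on the smooth compact $6$-manifold $V$.
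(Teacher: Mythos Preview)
Your argument is correct and essentially the same as the paper's: the paper invokes Mayer--Vietoris to get $\eu(D_1\cap D_2)=\eu(D_1)+\eu(D_2)-\eu(D_1\cup D_2)$ and then uses $\eu(D_1\cup D_2)=\eu(V)-1=5-B_3(V)$, which is precisely your inclusion--exclusion/additivity computation together with your Betti-number count for $V$.
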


\begin{proof}
By the Mayer-Vietoris exact sequence, we have $\eu(D_1 \cap D_2)= \eu(D_1) +\eu(D_2)-\eu(D_1 \cup D_2)$. Since $U$ is contractible, we have $\eu(D_1 \cup D_2) = \eu(V) -1=5-B_3(V)$. 
\end{proof}

\section{Construction of examples of type (A)}\label{sec:constA}
In this section, we construct examples of triplets with $(\dagger)$ of type (A1)--(A4).

\subsection{Affine modifications}\label{sec:affmod}
We review results about affine modifications, which is a technique to construct new contractible affine varieties from well-known ones.

\begin{defi} [cf. \cite{k-z99, ki05}]
Let $R$ be an affine domain and let $Z \coloneqq \Spec(R)$.
Let $I \subset R$ be an ideal and let $f$ be an element of $I$. We denote by $D \coloneqq V_Z(f ) \supset
C \coloneqq V_Z(I )$ the subschemes in $Z$ defined by $f$ and $I$, respectively. Then {\it the affine
modification of $Z$ with a locus $(C \subset D)$} is the affine variety $Z'$ with a coordinate ring $R'\coloneqq \Gamma(\mathcal{O}_{Z'}) = R[I/f ]$. It is clear that $R \subset R' \subset Q(R)$, where $Q(R)$ is the
quotient field of $R$. The canonical inclusion $R \subset R'$ induces a birational morphism
$\tau \colon Z' \rightarrow Z$. 
We often call this morphism $\tau$ itself as an affine modification with a
locus $(C \subset D)$.
If $f$ is not a unit in $R'$, it defines a divisor $E \coloneqq \div(f)_{Z'}$ on $Z'$ and we have an isomorphism $\tau|_{Z' \setminus E} \colon Z' \setminus E \rightarrow Z \setminus D$. Then $E$ is said to be an {\it exceptional divisor} of the morphism $\tau$. 
\end{defi}

\begin{thm}[cf. {\cite[Theorem 3.1, Corollary 3.1]{k-z99}}]\label{thm:KZ}
Let $Z$ be a contractible affine variety.
Let $\tau \colon Z' \rightarrow Z$ be an affine modification with a locus $(C \subset D)$. Let $E$ be the exceptional divisor of $\tau$. Suppose that 
\begin{enumerate}
\item[\textup{(I)}] $Z$ and $Z'$ are complex manifolds and $D$ and $E$ are topological manifolds with finite decompositions into irreducible components $D= \sum_{i=1}^{n}D_i$ and $E=\sum_{i=1}^{n}E_i$ respectively such that $E_i= \tau^*(D_i)$ for $ i=1, \dots , n$, and
\item[\textup{(I\hspace{-.1em}I)}] $\tau(E_i) \not \subset \Sing\,D_i$ for $i=1, \dots , n.$
\end{enumerate}

Then the following are equivalent:
\begin{enumerate}
\item[\textup{(1)}] $Z'$ is contractible.
\item[\textup{(2)}] $(\tau|_{E_i})_*\colon H_*(E_i, \Z) \rightarrow H_*(D_i, \Z)$ is an isomorphism for all $i=1, \dots , n$.
\end{enumerate}
\end{thm}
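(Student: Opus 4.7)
The plan is to compare the long exact sequences of the pairs $(Z',E)$ and $(Z,D)$ via $\tau$. Since $\tau$ restricts to an isomorphism $Z'\setminus E\to Z\setminus D$, excision gives $\tau_*\colon H_*(Z',E)\xrightarrow{\cong} H_*(Z,D)$; hypotheses (I) and (II) are what make this legitimate, because they say that $D$ and $E$ are topological manifolds with matching component decompositions and that $\tau(E_i)$ is not trapped in the singular locus of $D_i$, so each $E_i$ admits a regular tubular neighborhood in $Z'$ mapping onto a regular neighborhood of $D_i$ in $Z$. Inserting this excision isomorphism into the commutative ladder built from the two long exact sequences and applying the five-lemma yields: $\tau_*\colon H_*(Z')\to H_*(Z)$ is an isomorphism in every degree if and only if $\tau_*\colon H_*(E)\to H_*(D)$ is. By (I) the latter splits as $\bigoplus_i(\tau|_{E_i})_*$, so this reduces to each $(\tau|_{E_i})_*$ being an isomorphism.

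If $Z$ is contractible, the equivalence above becomes: condition (2) holds if and only if $Z'$ is acyclic. The converse direction of the theorem is then immediate, since if $Z'$ is contractible it is acyclic, and the ladder forces each $(\tau|_{E_i})_*$ to be an isomorphism. For the forward direction I would upgrade acyclicity of $Z'$ to contractibility via Whitehead's theorem. Since $Z'$ is a smooth affine variety, and hence has the homotopy type of a finite CW complex, it suffices to verify $\pi_1(Z')=1$. I would run van Kampen on the decomposition $Z'=(Z'\setminus E)\cup N(E)$ by a regular neighborhood $N(E)$ of $E$: the isomorphism $\tau$ identifies $\pi_1(Z'\setminus E)$ with $\pi_1(Z\setminus D)$, and hypothesis (II) ensures the meridian around $E_i$ in $Z'$ corresponds to a positive-integer multiple of the meridian around $D_i$ in $Z$. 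The relations killing the $D_i$-meridians in $\pi_1(Z)=1$ therefore also kill the $E_i$-meridians in $\pi_1(Z')$, giving $\pi_1(Z')=1$.

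The hardest step is this final $\pi_1$ comparison. For the projective analogue one could appeal to a Lefschetz-type input like Lemma \ref{lem:2.17}, but in the affine setting the only route I see is the van Kampen argument sketched above, and condition (II) is precisely the input that forces meridians to correspond bijectively, making the argument work.
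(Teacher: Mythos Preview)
The paper does not prove this result; it is quoted directly from \cite[Theorem~3.1, Corollary~3.1]{k-z99}, so there is no in-paper argument to compare against. Your outline is essentially the strategy of that original reference: compare the long exact sequences of $(Z',E)$ and $(Z,D)$ using that $\tau$ restricts to an isomorphism $Z'\setminus E\to Z\setminus D$, apply the five-lemma to reduce the homology question to $\tau_*\colon H_*(E)\to H_*(D)$, and then treat $\pi_1$ separately via meridians before invoking Whitehead (legitimate since a smooth affine variety has the homotopy type of a finite CW complex). One small correction worth recording: under (I) and (I\hspace{-.1em}I) the meridian of $E_i$ in $Z'\setminus E$ maps to the meridian of $D_i$ in $Z\setminus D$ itself, not merely a positive multiple, because $E_i=\tau^*(D_i)$ as reduced divisors and $\tau$ is an isomorphism off $E$; this is what makes the van Kampen step clean, and your conclusion is unaffected.
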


Suppose that a Fano 3-fold $V$ is the blow-up of a variety $W$ along a smooth curve and $W$ contains a contractible affine 3-fold as an open set. Then we use the following lemma to find contractible affine 3-folds in $V$, which is a corollary of Theorem \ref{thm:KZ}.

\begin{lem}\label{lem:const1}
Let $W$ be a smooth projective 3-fold. Let $(S_1, S_2, C)$ be a triplet of subvarieties of $W$, where $S_1$ and $S_2$ are closed surfaces and $C$ is a closed smooth curve. Assume that the following holds:
\begin{enumerate}
\item[\textup{(I)}] $C \not\subset S_1, C \subset S_2$, and $C \not\subset \Sing\,S_2$.
\item[\textup{(I\hspace{-.1em}I)}] $W \setminus S_1$ is contractible and affine.
\item[\textup{(I\hspace{-.1em}I\hspace{-.1em}I)}] $S_2 \setminus (S_1 \cap S_2)$ is smooth.
\end{enumerate}
Let $\varphi\colon V \rightarrow W$ be the blow-up along $C$. Then the following are equivalent:
\begin{enumerate}
\item[\textup{(1)}] the open subvariety $U\coloneqq V \setminus (\varphi^{-1}_*(S_{1}) \cup \varphi^{-1}_*(S_{2}))$ is contractible and affine.
\item[\textup{(2)}] the inclusion $\iota\colon C \setminus (S_1 \cap C) \hookrightarrow S_2 \setminus (S_1 \cap S_2)$ induces an isomorphism $H_*( C \setminus (S_1 \cap C), \Z) \rightarrow H_*(S_2 \setminus (S_1 \cap S_2), \Z)$.
\end{enumerate}
\end{lem}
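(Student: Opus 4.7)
The plan is to realize $U$ as an affine modification of $Z \coloneqq W \setminus S_1$, which is a smooth contractible affine $3$-fold by (II), and then apply Theorem \ref{thm:KZ}. Set $D \coloneqq S_2 \cap Z = S_2 \setminus (S_1 \cap S_2)$ and $C_Z \coloneqq C \cap Z = C \setminus (S_1 \cap C)$. By (III), $D$ is a smooth divisor in $Z$; by (I), $C_Z$ is a smooth curve contained in $D$. Since $Z$ is smooth, contractible, and affine, its Picard group vanishes, so $D$ is the zero scheme of a global function $f \in R \coloneqq \Gamma(Z, \mathcal{O}_Z)$, and the ideal $I \coloneqq \mathcal{I}_{C_Z} \subset R$ contains $f$.

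Next I will identify $U$ with $Z' \coloneqq \Spec R[I/f]$, the affine modification with locus $(C_Z \subset D)$. Because $C_Z$ is a smooth curve in the smooth $3$-fold $Z$ contained in a smooth Cartier divisor, the blow-up of the ideal $I$ coincides with the classical blow-up of $Z$ along $C_Z$, which in turn equals $\varphi^{-1}(Z) = V \setminus \varphi^{-1}_*(S_1)$. Removing the strict transform of $D$ --- which is exactly what the affine modification does --- yields $U$; in particular $U$ is affine. Assumption (III) together with $C_Z \cap S_1 = \varnothing$ forces $C_Z \cap \Sing S_2 = \varnothing$, so $\varphi^{-1}_*(S_2)$ meets $E_\varphi$ in a single section over $C_Z$. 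Consequently the exceptional divisor $E$ of $\tau \colon Z' \to Z$ equals $\tau^*(D)$ and is the total space of an $\A^1$-bundle $\pi \colon E \to C_Z$.

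The hypotheses of Theorem \ref{thm:KZ} are now immediate with $n=1$: both $Z$ and $Z'$ are complex manifolds; $D$ and $E$ are smooth and irreducible (hence each a topological manifold with a single-component decomposition); $E = \tau^*(D)$; and $\tau(E) = C_Z \not\subset \Sing D = \varnothing$. The theorem therefore yields that $U$ is contractible if and only if $(\tau|_E)_* \colon H_*(E, \Z) \to H_*(D, \Z)$ is an isomorphism. Since $\tau|_E = \iota \circ \pi$ and $\pi_*$ is an isomorphism (an $\A^1$-bundle being a homotopy equivalence on a smooth base), this condition is equivalent to $\iota_*$ being an isomorphism, which is exactly (2). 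The main technical point will be the identification of $U$ with the affine modification in the second paragraph; once this geometric dictionary is in place, everything else follows from a direct application of Theorem \ref{thm:KZ}.
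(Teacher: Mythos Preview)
Your proof is correct and follows the same approach as the paper: you set $Z=W\setminus S_1$, $D=S_2\setminus(S_1\cap S_2)$, identify $U$ with the affine modification of $Z$ along $(C_Z\subset D)$, and invoke Theorem~\ref{thm:KZ}. The paper's proof is a one-line application of Theorem~\ref{thm:KZ} with exactly these choices; you have simply spelled out the verification that $U$ really is this affine modification (in particular the identity $\varphi^{-1}(Z)=V\setminus\varphi^{-1}_*(S_1)$, which holds because $C\not\subset S_1$ forces $\varphi^*S_1=\varphi^{-1}_*S_1$ as Cartier divisors) and that the hypotheses (I), (II) of Theorem~\ref{thm:KZ} are satisfied.
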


\begin{proof}
Applying Theorem \ref{thm:KZ} by setting $\tau=\varphi|_U, (Z', E)=(U, E_{\varphi} \cap U)$ and $(Z, D)=(W \setminus S_1, S_2 \setminus (S_1 \cap S_2))$, we have the assertion.
\end{proof}

In the forthcoming article \cite{n2}, we investigate the isomorphism class of $U$ as above when $V$ is of type (A1) or (A2), that is, when $W \cong \P^3$.

\begin{nota}
For closed surfaces $S_1$ and $S_2$ in a smooth projective 3-fold $W$, we define a condition (I') as
\begin{enumerate}
\item[\textup{(I')}] $S_1+S_2 \sim -K_W$.
\end{enumerate}
\end{nota}

The following lemma shows us that the condition (I) of Lemma \ref{lem:const1} is necessary to construct triplets $(V, D_1 \cup D_2, U)$ with $(\dagger)$ of type (A).

\begin{lem}\label{lem:condiI}
Let $(V, D_1 \cup D_2, U)$ be a triplet as in Problem \ref{pr:main}. Assume that $V$ is given by the blow-up $\varphi\colon V \rightarrow W$ of a Fano variety $W$ along a smooth curve $C$. Let $S_i\coloneqq \varphi(D_i)$ for $i=1,2$.
Then $(\dagger)$ holds for $D_1$ and $D_2$ if and only if \textup{(I)} and \textup{(I')} hold for $S_1$ and $S_2$ up to permutation.
\end{lem}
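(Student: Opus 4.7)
The plan is to exploit the standard blow-up identities $K_V = \varphi^{*} K_W + E$ (with $E \coloneqq E_{\varphi}$) and $\Pic V = \varphi^{*}\Pic W \oplus \Z[E]$, expand each $D_i$ in this decomposition, and match coefficients in the two summands to read off $(\dagger)$.

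First I would treat the principal case where both $D_1$ and $D_2$ are strict transforms of prime divisors $S_i = \varphi(D_i) \subset W$. Then $D_i = \varphi^{-1}_{*}(S_i) = \varphi^{*} S_i - m_i E$ with $m_i \coloneqq \mult_C(S_i) \geq 0$, and substitution gives
\[
K_V + D_1 + D_2 = \varphi^{*}(K_W + S_1 + S_2) + (1 - m_1 - m_2)\,E.
\]
Since the summands $\varphi^{*}\Pic W$ and $\Z[E]$ of $\Pic V$ are independent, $(\dagger)$ is equivalent to the simultaneous conditions $K_W + S_1 + S_2 \sim 0$ and $m_1 + m_2 = 1$. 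The first equality is exactly \textup{(I$'$)}. The second, combined with $m_i \geq 0$, forces $\{m_1,m_2\} = \{0,1\}$; and since $C$ is smooth one has $m_i = 0 \iff C \not\subset S_i$, and $m_i = 1 \iff C \subset S_i$ together with $C \not\subset \Sing S_i$. This is exactly \textup{(I)} up to permutation. The converse direction is immediate by running the same chain of equivalences backwards after setting $D_i \coloneqq \varphi^{-1}_{*}(S_i)$.

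The point I expect to be the main obstacle is excluding the configuration in which one of the $D_i$ coincides with the exceptional divisor $E$, because then $S_i = \varphi(D_i) = C$ is a curve and conditions \textup{(I)}, \textup{(I$'$)} are not well formed as statements about divisors of $W$. If one writes $D_1 = E$ and $D_2 = \varphi^{*} S - m E$ for a prime divisor $S \subset W$ and some $m \geq 0$, the analogous computation yields $K_V + D_1 + D_2 = \varphi^{*}(K_W + S) + (2 - m)E$, so $(\dagger)$ would force $S \sim -K_W$ and $\mult_C S = 2$. I would rule this out by combining the unimodularity of the change of basis between $(\varphi^{*} H, E)$ and $(D_1, D_2)$ imposed by Lemma~\ref{lem:2.1} (where $H$ generates $\Pic W$)---which forces the Fano index of $W$ to be one---with a direct irreducibility check that the divisor class $\varphi^{*}(-K_W) - 2E$ cannot be represented by a prime divisor in the remaining cases.
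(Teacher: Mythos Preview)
Your main case---both $D_i$ strict transforms of surfaces $S_i$---is correct and more direct than the paper's argument: you compute with multiplicities in the decomposition $\Pic V = \varphi^*\Pic W \oplus \Z[E]$, while the paper instead invokes Proposition~\ref{prop:2.6} to conclude that (up to swap) $D_1 \sim \varphi^*\mathcal{O}_W(1)$ and hence $D_2 \sim \varphi^*\mathcal{O}_W(r-1) - E_\varphi$ from $(\dagger)$. Your route is self-contained for this step and makes the equivalence with (I) and (I$'$) transparent.

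The gap is in the edge case $D_i = E$. You correctly derive $r = 1$ from the unimodularity of Lemma~\ref{lem:2.1}, but the promised ``direct irreducibility check'' for $\varphi^*(-K_W) - 2E$ is left unspecified and is not obviously feasible without further input. Two clean completions are available. First, $r = 1$ is already a contradiction: by \cite[Proposition~5.12]{m-m83} (invoked later in the proof of Proposition~\ref{prop:W_1}), the target $W$ of such a blow-up lies in $\{\P^3, \Q^3, V_d\}$, all of index at least $2$, so no irreducibility check is needed. Second, and closer to the paper's own logic, one can invoke Lemma~\ref{lem:2.5}: combining $-K_V = r H_1 - E$ with Theorem~\ref{thm:2.3}(2) and $\mu_1 = 1$ gives $E = (r - \mu_2) H_1 - H_2$, whose $H_2$-coefficient is $-1$, so $E$ cannot equal any $D_i$. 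The paper's appeal to Proposition~\ref{prop:2.6} (whose proof rests on Lemma~\ref{lem:2.5}) absorbs exactly this step, which is why the degenerate configuration never surfaces there.
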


\begin{proof}
The “if” part is obvious. Suppose that $(\dagger)$ holds for the triplet. Since the length of $\varphi$ is one, we may assume $D_1 \sim \mathcal{O}_W(1)$ and $D_2 \sim \mathcal{O}_W(r-1)-E_{\varphi}$ by Proposition \ref{prop:2.6} and the condition $(\dagger)$, where $r$ is the Fano index of $W$. Hence $S_1=\varphi(D_1)$ and $S_2=\varphi(D_2)$ satisfy (I) and (I'). 
\end{proof}

\begin{rem}\label{rem:123}
We use the notation as in Lemma \ref{lem:condiI}. In \S\ref{sec:condiIII}, we prove the following relations among $(\dagger)$ and (I)--(I\hspace{-.1em}I\hspace{-.1em}I) of Lemma \ref{lem:const1}.
\begin{itemize}
\item Suppose $(\dagger)$ holds. Then (I\hspace{-.1em}I) holds if and only if $B_2(S_1)$ is smallest possible among hyperplane sections of $W$.
\item Suppose $(\dagger)$ and (I\hspace{-.1em}I) hold, and $S_2$ is normal and rational. Then (I\hspace{-.1em}I\hspace{-.1em}I) holds.
\end{itemize}
Hence (I\hspace{-.1em}I) holds when $(V, D_1 \cup D_2, U)$ is a triplet with $(\dagger)$ of type (A1) or (A2) since $S_1 \cong \P^2$. In the forthcoming article \cite{n2}, we show that the second relation still holds without the rationality of $S_2$.
\end{rem}

Now we choose four varieties $C, S_1, S_2, W$ such that the pair (W, C) satisfies one of the statements (A1)--(A4) and the triplet $(S_1, S_2, C)$ satisfies (I)--(I\hspace{-.1em}I\hspace{-.1em}I), (I') and (1) of Lemma \ref{lem:const1}. 
Then we complete the proof of Theorem 1.1 (2) for all deformation equivalent classes as in (A) of Theorem \ref{thm:main} by Lemmas \ref{lem:const1} and \ref{lem:condiI} in \S\ref{sec:constA12}--\ref{sec:constA4}.

Note that triplets with $(\dagger)$ of type (A) are not necessarily constructed from triplets of subvarieties which satisfy all the condition of Lemma \ref{lem:const1}. In fact, we shall construct a triplet with $(\dagger)$ of type (A3) from a triplet of subvarieties which does not satisfy the condition (I\hspace{-.1em}I) in \S\ref{sec:counterexA}.

\subsection{The types (A1) and (A2)}\label{sec:constA12}
We construct triplets $(V, D_1 \cup D_2, U)$ with $(\dagger)$ of types (A1) and (A2).
\begin{prop} \label{prop:const1}
There exists a triplet $(V, D_1 \cup D_2, U)$ with $(\dagger)$ such that $V$ is a blow-up of $\P^3$ along an elliptic curve of degree $d$ for $d=3,4$.
\end{prop}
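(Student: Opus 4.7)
The plan is to apply Lemma~\ref{lem:const1} together with the easy direction of Lemma~\ref{lem:condiI} to a triplet $(S_1,S_2,C)$ in $W=\P^3$ in which $S_1$ is a hyperplane and $S_2$ is the projective cone over a plane cubic containing $C$. Then (I') reduces to $\mathcal{O}(1)+\mathcal{O}(3)\sim -K_{\P^3}$, and (I\hspace{-.1em}I) is immediate from $\P^3\setminus S_1\cong\A^3$. Once everything is in place, $V=\Bl_C\P^3$ will be Fano of No.~25 (resp.~No.~28) in \cite[Table~2]{m-m81} for $d=3$ (resp.~$d=4$), hence of type (A1).

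For $d=3$, let $H_0$ be the plane of $C$, pick any $O\in\P^3\setminus H_0$, and take $S_2$ to be the cone over $C$ with vertex $O$; choose $S_1$ to be a generic hyperplane through $O$ different from $H_0$. For $d=4$, pick a generic $O\in C$ so that the projection from $O$ is an isomorphism $\tilde\pi_O\colon C\to C'\subset\P^2$ onto a smooth plane cubic $C'$ sitting in some plane $H_0\not\ni O$, and let $S_2$ be the cone over $C'$ with vertex $O$; this is a cubic surface through $C$ because every $P\in C$ lies on the ruling over $\tilde\pi_O(P)$. For $d=4$ I would take $S_1$ to be a generic hyperplane containing the tangent line $T_OC$. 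In both cases $\Sing S_2=\{O\}\subset S_1$, giving (I\hspace{-.1em}I\hspace{-.1em}I), and condition (I) is immediate ($C\subset S_2$ by construction, $C\not\subset\Sing S_2=\{O\}$ since $\dim C=1$, and $C\not\subset S_1$ since a plane cubic is not contained in a line (resp.\ $C$ is nondegenerate)).

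It remains to verify condition (2) of Lemma~\ref{lem:const1}. Away from the vertex, $S_2\setminus\{O\}$ is an $\A^1$-bundle over the base cubic (which is $C$ itself when $d=3$ and $C'$ when $d=4$) via the ruling map, and the base embeds as a section. Writing $\ell_1\coloneqq\pi_O(S_1)\subset\P^2$, the rulings contained in $S_1$ are exactly those over $\ell_1\cap(\text{base cubic})$, a set of three points; hence $S_2\setminus(S_1\cap S_2)$ is the restricted $\A^1$-bundle over the base minus three points. What must be shown is that $C\setminus(S_1\cap C)$ sits inside as precisely this section over the three-punctured base.

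\emph{The main obstacle} appears when $d=4$. For a generic $S_1\ni O$, the line $\ell_1$ misses the image $T_O=\tilde\pi_O(O)\in C'$, so the base would be $C'$ minus only three points while $C\setminus(S_1\cap C)$ is $C$ minus four, and the ranks of $H_1$ disagree by one. This is why I take $S_1\supset T_OC$: it forces $T_O\in\ell_1$ and makes $S_1\cdot C=2O+p_1+p_2$ as divisors on $C$, so the underlying set $S_1\cap C=\{O,p_1,p_2\}$ has exactly three elements, matching $\ell_1\cap C'=\{T_O,\tilde\pi_O(p_1),\tilde\pi_O(p_2)\}$ under the isomorphism $\tilde\pi_O$. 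The inclusion then realizes $C\setminus(S_1\cap C)$ as a section of an $\A^1$-bundle and is a homotopy equivalence, so (2) holds. Lemmas~\ref{lem:const1} and~\ref{lem:condiI} together yield a triplet $(V,D_1\cup D_2,U)$ with $(\dagger)$, completing the construction.
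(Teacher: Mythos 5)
Your construction is essentially the paper's own: in both arguments $S_2$ is a cubic cone, $C$ sits on it as a plane section (for $d=3$) or as a section passing through the vertex tangent to one distinguished ruling (for $d=4$), $S_1$ is a hyperplane through the vertex containing that ruling, and condition (2) of Lemma \ref{lem:const1} is verified by exhibiting $S_2\setminus(S_1\cap S_2)$ as an $\A^1$-bundle over $C\setminus(S_1\cap C)$ --- the paper merely builds the curve on an abstract cone as a smooth member of $|C_0+df|$, whereas you recover the same cone by projecting the given $C$ from a point, and your identification of why a generic $S_1\ni O$ fails for $d=4$ matches the paper's choice of $S_1\supset l$ exactly. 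The only slip is cosmetic: the Mori--Mukai labels are swapped ($d=3$ gives No.~28, the plane cubic, and $d=4$ gives No.~25).
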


\begin{proof}
Let $S_2$ be the cone over a plane elliptic curve $e$, and $v$ the vertex of $S_2$. Let $\pi\colon \wt{S}_2 \rightarrow S_2$ be the minimal resolution. Then $\wt{S}_2$ is a $\P^1$-bundle over $e$, which corresponds to a decomposable vector bundle of rank 2 and degree 3. Let $C_0$ (resp.\,$f$) be the minimal section (resp.\,a ruling) of $\wt{S}_2$. On $\wt{S}_2$, take a smooth member $\wt{C}_d \in \left|C_0+df \right|$ and let $C_d\coloneqq \pi(\wt{C}_d)$, which is an elliptic curve of degree $d$ for $d=3,4$.

For $d=3$, let $S_1$ be a hyperplane which contains $v$. Then the triplet $(S_1, S_2, C_3)$ satisfies the conditions (I\hspace{-.1em}I), (I\hspace{-.1em}I\hspace{-.1em}I) and (I'). Since the intersection $S_1 \cap S_2$ is a sum of rulings of $S_2$, the surface $S_1$ does not contain $C_3$. Hence the condition (I) holds for the triplet $(S_1, S_2, C_3)$. It also holds that $S_2 \setminus (S_1 \cap S_2)$ is isomorphic to an $\A^1$-bundle over $C_3 \setminus (S_1 \cap C_3)$. Hence the triplet $(S_1, S_2, C_3)$ satisfies the condition (1).

For $d=4$, let $\wt{l}$ be the ruling of $\wt{S}_2$ such that $\wt{l} \cap \wt{C}_4=\wt{l} \cap C_0$, and $l\coloneqq \pi (\wt{l}).$ Let $S_1$ be a hypersurface which contains $l$. Then $S_1$ contains $v$ and hence the triplet $(S_1, S_2, C_4)$ satisfies the conditions (I)--(I\hspace{-.1em}I\hspace{-.1em}I) and (I') for the same reason as the case that $d=3$. By the choice of $l$, it also holds that $S_2 \setminus (S_1 \cap S_2)$ is isomorphic to an $\A^1$-bundle over $C_4 \setminus (S_1 \cap C_4)$. Hence the condition (1) holds for the triplet $(S_1, S_2, C_3)$.

As a result, the triplet $(S_1, S_2, C_d)$ satisfies the conditions (I)--(I\hspace{-.1em}I\hspace{-.1em}I), (I') and (1) for $d=3,4$.
\end{proof} 

\begin{prop}
There exists a triplet $(V, D_1 \cup D_2, U)$ with $(\dagger)$ such that $V$ is a blow-up of $\P^3$ along a smooth rational curve of degree $d$ for $d=1,2,3, 4$.
\end{prop}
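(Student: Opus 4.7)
The plan is to apply Lemma \ref{lem:const1} together with Lemma \ref{lem:condiI}, in parallel with the proof of Proposition \ref{prop:const1}. By Lemma \ref{lem:condiI}, it suffices to produce, for each $d \in \{1,2,3,4\}$, a triple $(S_1, S_2, C_d)$ of closed subvarieties of $\P^3$, where $S_1$ is a hyperplane (so condition (I\hspace{-.1em}I) of Lemma \ref{lem:const1} is automatic because $\P^3 \setminus S_1 \cong \A^3$), $S_2$ is an irreducible cubic surface containing a smooth rational curve $C_d$ of degree $d$, and conditions (I), (I\hspace{-.1em}I\hspace{-.1em}I), and (1) of Lemma \ref{lem:const1} hold. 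A clean sufficient condition for (1) is that both $S_2 \setminus (S_1 \cap S_2)$ and $C_d \setminus (C_d \cap S_1)$ be contractible, so that $\iota$ is a map between contractible spaces and hence automatically a homology isomorphism; to this end I would take $S_2$ singular with $\Sing S_2 \subset S_1$ (which simultaneously yields (I\hspace{-.1em}I\hspace{-.1em}I)) and $S_2 \setminus (S_1 \cap S_2) \cong \A^2$, together with $C_d$ not contained in $\Sing S_2$ and meeting $S_1$ in a single set-theoretic point (so that $C_d \setminus (C_d \cap S_1) \cong \A^1$), which gives (I) as well.

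For $d = 1, 2$ this is realised by taking the non-normal cubic scroll $S_2 = \{x_0^2 x_2 = x_1^2 x_3\}$, whose singular locus is the line $\{x_0 = x_1 = 0\}$, together with $S_1 = \{x_0 = 0\}$. In the chart $\A^3 = \P^3 \setminus S_1$ with coordinates $(y, z, w) = (x_1, x_2, x_3)$ one has $S_2 \cap \A^3 = \{z = y^2 w\} \cong \A^2$ via $(y, w) \mapsto (y, y^2 w, w)$. Taking $C_1 = \{x_2 = x_3 = 0\}$ (the closure of $\{w = 0\}$) gives a line on $S_2$ meeting $S_1$ only at $[0:1:0:0]$; taking $C_2 = \{x_3 = c x_0,\ x_0 x_2 = c x_1^2\}$ for any $c \neq 0$ (the closure of $\{w = c\}$) gives a smooth conic on $S_2$ tangent to $S_1$ at $[0:0:1:0]$ with multiplicity two.

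For $d = 3, 4$ the above scroll does not admit a smooth rational curve of degree $d$ meeting $S_1$ in a single set-theoretic point: a local analysis near the candidate intersection point, using the relation $x_0^2 x_2 = x_1^2 x_3$ together with the required multiplicity-$d$ tangency, forces the curve into a coordinate plane, contradicting non-planarity of smooth rational curves of degree $\geq 3$. I would therefore use a different singular cubic $S_2$ for these two cases, for instance a cone over a plane rational cubic with vertex in $S_1$, or another non-normal cubic surface with non-normal locus lying in $S_1$, and exhibit $C_d$ as an explicit smooth rational curve of degree $d$ on $S_2$ meeting $S_1$ in a single point with multiplicity $d$. The main obstacle is this last explicit construction for $d = 3$ and $d = 4$: one must simultaneously arrange $\Sing S_2 \subset S_1$, $S_2 \setminus (S_1 \cap S_2) \cong \A^2$, and the existence on $S_2$ of a smooth rational curve of the appropriate degree with one-point (multiplicity-$d$) tangency to $S_1$; once such data is in hand, conditions (I) and (1) are immediate and Lemma \ref{lem:const1} then yields the desired triplet.
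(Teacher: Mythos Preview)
Your strategy matches the paper's: apply Lemma \ref{lem:const1} with $S_1$ a hyperplane, $S_2$ a non-normal cubic whose singular locus lies in $S_1$ (so that (I\hspace{-.1em}I\hspace{-.1em}I) holds and $S_2 \setminus (S_1 \cap S_2) \cong \A^2$), and $C_d$ a smooth rational curve of degree $d$ on $S_2$ meeting $S_1$ set-theoretically in one point. Your explicit constructions for $d=1,2$ on the scroll $\{x_0^2 x_2 = x_1^2 x_3\}$ are correct.

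For $d=3,4$, however, your proposal is incomplete: you do not exhibit the required $(S_1, S_2, C_d)$, and you explicitly flag the construction as ``the main obstacle''. This is a genuine gap. (Your impossibility sketch for the scroll is not a proof either, and in any case is irrelevant to establishing the proposition.)

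The paper avoids splitting into cases by choosing one cubic that works uniformly: $S_1 = \{x=0\}$ and $S_2 = \{x^2z + y^3 + xyt = 0\}$, for which $E_{S_2} = S_1 \cap S_2 = \{x=y=0\}$ is a single line and $S_2 \setminus (S_1 \cap S_2) \cong \A^2$. This $S_2$ is not a cone, hence of class (C) in \cite{a-f83}, and its strict transform under the blow-up $\psi$ of $\P^3$ along $E_{S_2}$ is $\F_1$, with $E_\psi|_{\psi^{-1}_*S_2}$ a reducible member of $|\Sigma_1 + f_1|$. On $\F_1$ one then chooses general smooth members $\wt C_1 \in |f_1|$, $\wt C_2 \in |\Sigma_1+f_1|$, $\wt C_3 \in |\Sigma_1+2f_1|$, $\wt C_4 \in |2\Sigma_1+2f_1|$, each meeting $E_\psi$ in a single point (with a transversality condition for $d=3$ to keep the image smooth). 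Their images $C_d = \psi(\wt C_d)$ are then smooth rational of degree $d$ meeting $S_1$ once, and your own argument for (I) and (1) finishes the job. The missing ingredient in your proposal is precisely this passage to the normalization $\F_1$, where the required curve classes in all four degrees are visibly available.
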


\begin{proof}
We define $S_1$ and $S_2$ in $\P^3_{[x:y:z:t]}$ as
\[
\begin{aligned}
S_1&\coloneqq \{x=0\}\\
S_2&\coloneqq \{x^2z+y^3+xyt=0\}.
\end{aligned}
\]
Since $S_1$ is a plane and $S_2$ is a non-normal irreducible cubic surface, they satisfy the conditions (I\hspace{-.1em}I) and (I'). We write the conductor locus of $S_2$ as $E_{S_2}$. Then we have 
\[
E_{S_2}=\{x=y=0\}=S_1 \cap S_2,
\]
which implies that the condition (I\hspace{-.1em}I\hspace{-.1em}I) holds for $S_1$ and $S_2$.

Let $\psi \colon X \rightarrow \P^3$ be the blow-up along $E_{S_2}$. Then it is easy to see that $\psi^{-1}_*(S_2)$ is smooth and hence $\psi|_{S_2}$ is the normalization of $S_2$.

Since $S_2$ contains three lines $E_{S_2}, \{y=z=0\}$ and $\{x=y, x+z+t=0\}$ which are not concurrent, $S_2$ is not a cone. Hence $S_2$ belongs to the class $(C)$ of \cite{a-f83}. In particular, $\psi^{-1}_*(S_2)$ is isomorphic to $\F_1$ and $E_{\psi}|_{\psi^{-1}_*(S_2)} \sim \Sigma_1 +f_1$. We note that $E_{\psi}|_{\psi^{-1}_*(S_2)}$ is reducible since $S_2 \setminus E_{S_2} \cong \A^2$. 

On $\psi^{-1}_*(S_2) \cong \F_1$, we take general smooth members 
\[
\wt{C}_1 \in \left|f_1\right|, \wt{C}_2 \in \left|\Sigma_1+f_1\right|, \wt{C}_3 \in \left|\Sigma_1+2f_1\right|\textup{ and } \wt{C}_4 \in \left|2\Sigma_1+2f_1\right|
\]
 such that $\wt{C}_d \cap {E}_{\psi}$ consists of a single point for $d=1,2,3,4$.
Since $\Sigma_1 +2f_1$ is very ample, we can choose $\wt{C}_3$ such that its tangent line of $\wt{C}_3$ at $q\coloneqq \wt{C}_3 \cap {E}_{\psi}$ is different from the ruling of $E_{\psi}$ containing $q$. 

Fix $d \in \{1,2,3,4\}$. By construction, the curve
\[
C_d\coloneqq \psi(\wt{C}_d)
\]
is smooth and rational of degree $d$. Since $\wt{C}_d \cap {E}_{\psi}$ consists of a single point, so does $C_d \cap E_{S_2}=C_d \cap S_1$. Hence the triplet $(S_1, S_2, C_d)$ satisfies the condition (I). As $C_d \setminus (S_1 \cap C_d) \cong \A^1$ and $S_2 \setminus (S_1 \cap S_2) \cong \A^2$, the triplet satisfies the condition (1). Hence the triplet $(S_1, S_2, C_d)$ satisfies the conditions (I)--(I\hspace{-.1em}I\hspace{-.1em}I), (I') and (1).
\end{proof}

\subsection{The type (A3)}\label{sec:constA3}
We construct triplets $(V, D_1 \cup D_2, U)$ with $(\dagger)$ of type (A3).

\begin{prop}\label{prop:const3}
There exists a triplet $(V, D_1 \cup D_2, U)$ with $(\dagger)$ such that $V$ is a blow-up of $\Q^3$ along a smooth rational curve of degree $d$ for $d=1,3,4$.
\end{prop}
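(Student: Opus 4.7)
The plan is to imitate the construction in the proof of the previous proposition, with $\P^3$ replaced by $\Q^3$. By Lemmas~\ref{lem:const1} and \ref{lem:condiI}, it suffices to exhibit, for each $d\in\{1,3,4\}$, a triplet of subvarieties $(S_1,S_2,C_d)$ of $\Q^3$ satisfying conditions (I)--(III), (I'), and (1).

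First I would fix the boundary surfaces. Take $S_1$ to be a tangent hyperplane section of $\Q^3$ (a quadric cone $\Q^2_0$); a direct computation in affine coordinates on the complement of the tangent hyperplane gives $\Q^3\setminus S_1\cong \A^3$, so (II) holds. Since the Fano index of $\Q^3$ is $3$, Lemma~\ref{lem:condiI} forces $S_2$ to be a quadric section of $\Q^3$, i.e.\ a Gorenstein del Pezzo surface of degree $4$. I would take $S_2$ to be non-normal in the class (C) of Abe--Furushima~\cite{a-f83}, so that the normalization $\sigma\colon \overline{S_2}\cong \F_2 \to S_2$ has reducible conductor $\sigma^{-1}(E_{S_2})=\Sigma_2+F_0$ for some ruling $F_0$. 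The key point is to choose defining equations in $\P^4$ so that $S_1\supset E_{S_2}$ and $S_1\cap S_2=E_{S_2}$ holds set-theoretically; this gives $S_2\setminus(S_1\cap S_2)\cong \F_2\setminus(\Sigma_2+F_0)\cong \A^2$ and secures (III).

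Using the normalization adjunction formula $\sigma^*(-K_{S_2})=-K_{\F_2}-(\Sigma_2+f_2)=\Sigma_2+3f_2$, a smooth irreducible $\wt C\in |a\Sigma_2+bf_2|$ is sent by $\sigma$ to a curve of degree $a+b$ in $\Q^3\subset\P^4$. I would take
\[
\wt C_1\in |f_2|,\qquad \wt C_3\in |\Sigma_2+2f_2|,\qquad \wt C_4\in |\Sigma_2+3f_2|,
\]
with $\wt C_1$ a ruling distinct from $F_0$, $\wt C_3$ a general smooth section (disjoint from $\Sigma_2$, meeting $F_0$ transversely in one point), and $\wt C_4$ a smooth member passing through the node $\Sigma_2\cap F_0$; intersection computations on $\F_2$ confirm that each $\wt C_d$ is rational, of the desired degree, and meets $\wt E_{S_2}$ set-theoretically in a single point. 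For $d=1,3$ that point lies at a smooth point of $\wt E_{S_2}$, where $\sigma$ has injective differential, so $C_d:=\sigma(\wt C_d)$ is automatically a smooth rational curve of degree $d$, not contained in $S_1$ or in $\Sing S_2=E_{S_2}$, and meeting $S_1$ in a single point; (I) then follows. Condition (1) is automatic since $C_d\setminus(C_d\cap S_1)\cong\A^1$ and $S_2\setminus(S_1\cap S_2)\cong\A^2$ are both contractible, so Lemmas \ref{lem:const1} and \ref{lem:condiI} together deliver the required triplet.

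The main obstacle is the case $d=4$: every smooth $\wt C_4\in|\Sigma_2+3f_2|$ meets both $\Sigma_2$ and $F_0$, and for a generic choice the two intersection points have distinct $\sigma$-images, producing a node on $C_4$. Forcing $\wt C_4$ through the node $\Sigma_2\cap F_0$ fuses the two intersection points into one but places $C_4$ at the pinch-type singularity of $S_2$, where the injective-differential argument no longer applies; a direct local coordinate calculation around that singularity is therefore needed to verify smoothness of $C_4$. Writing down explicit polynomial equations in $\P^4$ that realise simultaneously the class (C) structure on $S_2$ and the set-theoretic equality $S_1\cap S_2=E_{S_2}$, and then checking all of the intersection conditions above, constitutes the main calculational content of the proof.
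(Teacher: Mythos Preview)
Your plan is correct and targets exactly the same configuration as the paper: $S_1\cong\Q^2_0$, $S_2$ a non-normal degree-$4$ del Pezzo of class~(C) with $\ol{S_2}\cong\F_2$ and reducible conductor $\Sigma_2+F_0$, arranged so that $S_1\cap S_2=E_{S_2}$, and $\wt C_d$ taken in $|f_2|,\ |\Sigma_2+2f_2|,\ |\Sigma_2+3f_2|$.

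Where the paper differs is in \emph{how} it produces $S_1,S_2$ and verifies their properties. Rather than writing equations in $\P^4$, it realises the normalisation of $S_2$ as the restriction of an ambient blow-up. Let $\psi_1\colon X\to\Q^3$ be the blow-up along a line $l$ (so $X$ is Fano No.~31) and $\psi_2\colon X\to\P^2$ its second extremal contraction, a $\P^1$-bundle; one checks $E_{\psi_1}\cong\F_1$ and that $\psi_2|_{E_{\psi_1}}$ blows up a point $p\in\P^2$. Choosing a line $r_1$ and a smooth conic $r_2$ through $p$ with $r_1\cap r_2=\{p\}$, one sets $\wt S_i=\psi_2^{-1}(r_i)$ and $S_i=\psi_1(\wt S_i)$. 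A short intersection calculation on $E_{\psi_1}$ then yields $\wt S_i\cong\F_2$ (whence $S_1\cong\Q^2_0$ and $S_2$ of class~(C)), while $r_1\cap r_2=\{p\}$ forces $S_1\cap S_2=l$ directly.

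This buys exactly what you need at the $d=4$ obstacle. Since $\psi_1|_{\wt S_2}$ is the restriction of a blow-up of a smooth $3$-fold, smoothness of $C_4=\psi_1(\wt C_4)$ reduces to the tangent line of $\wt C_4$ at $q=\Sigma_2\cap C$ being distinct from the ruling of $E_{\psi_1}$ through $q$; very ampleness of $\Sigma_2+3f_2$ on $\F_2$ lets one impose this in one line. Your route through the abstract normalisation $\sigma\colon\F_2\to S_2$ would instead require the explicit local computation you flag at the pinch-type point---and even your ``injective differential'' claim for $d=1,3$ implicitly uses that $S_2$ has ordinary double points along $E_{S_2}$ away from $\sigma(q)$, which in your setup still needs the explicit equations to confirm. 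The paper's ambient construction sidesteps all of this, and the same device is recycled (with $r_2$ chosen differently) for the conic case in the next proposition.
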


\begin{proof}
First, we construct hypersurfaces $S_1$ and $S_2$ in $\Q^3$ which compose desired triplets as in Lemma \ref{lem:const1}. Let $l$ be a line in $\Q^3$ and $\psi_1\colon X \rightarrow \Q^3$ the blow-up of $\Q^3$ along $l$. Then $X$ is a Fano 3-fold of No.31 in \cite[Table 2]{m-m81} and there is the extremal contraction $\psi_2\colon X \rightarrow \P^2$ different from $\psi_1$ and it is a $\P^1$-bundle (see \cite[\S I\hspace{-.1em}I\hspace{-.1em}I-3]{ma95}). An easy computation shows that 
\begin{equation}\label{eq:const2}
\psi_2^* \mathcal{O}_{\P^2}(1) \sim \psi_1^* \mathcal{O}_{\Q^3} (1) - E_{\psi_1}.
\end{equation}
We also obtain that $N_l \Q^3=\mathcal{O}_l \oplus \mathcal{O}_l(1)$. Hence $E_{\psi_1} \cong \F_1$ and $\psi_2|_{E_{\psi_1}} \colon E_{\psi_1} \rightarrow \P^2$ is a blow-up at a point. Let $p$ be a center of blow-up $\psi_2|_{E_{\psi_1}}$. Let $r_1 \in \left| \mathcal{O}_{\P^2}(1) \otimes \mathcal{I}_p \right|$ and $r_2 \in \left| \mathcal{O}_{\P^2}(2) \otimes \mathcal{I}_p \right|$ be smooth curves such that $r_1 \cap r_2=\{p\}$. We take $\wt{S}_i$ and $S_i$ for $i=1, 2$ as
\[
\wt{S}_i\coloneqq \psi_2^{-1}(r_i) \textup{ and } S_i\coloneqq \psi_1(\wt{S}_i). 
\] 
Then the pair $(S_1, S_2)$ satisfies the condition (I'). 
\begin{claim}
It holds that $S_1 \cong \Q^2_0$ and $S_2$ is a non-normal Gorenstein del Pezzo surface of class (C) in \cite{a-f83}.
\end{claim}

\begin{claimproof}\label{cl:QF_2}
By (\ref{eq:const2}), we have $S_i \in \left| \mathcal{O}_{\Q^3}(i) \otimes \mathcal{I}_{l}^i\right|$ for $i=1,2$. Hence it suffices to show that $\wt{S}_1 \cong \F_2$ to obtain the first assertion. We also note that $S_2$ is a non-normal Gorenstein del Pezzo surface of degree $4$. By Lemma \ref{lem:cone} and \cite{a-f83}, $S_2$ belongs to 
\[
\left\{
\begin{array}{ll}
\textup{the class (C) of \cite{a-f83}}&\textup{if } \wt{S}_2 \cong \F_2 \textup{ and}\\
\textup{the class (D) of \cite{a-f83}}&\textup{if } \wt{S}_2 \cong \F_0. 
\end{array}
\right.
\]
Hence it suffices to show that $\wt{S}_2 \cong \F_2$ to obtain the second assertion. Note that $\psi_1(E_{\psi_1} \cap \wt{S}_2)$ is the conductor locus $E_{S_2}=l$ of $S_2$. 

Fix $i \in \{1,2\}$. Since $N_{E_{\psi_1}}X=\mathcal{O}_{E_{\psi_1}}(-\Sigma_1)$ and $\wt{S}_i|_{E_{\psi_1}} \sim i(\Sigma_1 + f_1)$, we have
\begin{equation}\label{eq:F_2}
(E_{\psi_1}^2 \cdot \wt{S}_i)=-i(\Sigma_1 \cdot \Sigma_1+f_1)_{E_{\psi_1}}=0.
\end{equation}
By the choice of $r_i$, the intersection $\wt{S}_i|_{E_{\psi_1}}$ is decomposed to two curves $\Sigma_1$ and $C \sim (i-1)\Sigma_1 + if_1$ on $E_{\psi_1}$. Note that $\Sigma_1$ and $C$ are a ruling and a section on $\wt{S}_i$ respectively. By (\ref{eq:F_2}), we have 
\[
0=(\Sigma_1+C)^2=\Sigma_1^2+2(\Sigma_1 \cdot C)+C^2=2+C^2 \textup{ on } \wt{S}_i.
\]
Hence $C^2=-2$ and $S_i \cong \F_2$. 
\hfill $\blacksquare$
\end{claimproof}

Hence $\Q^3 \setminus S_1 \cong \A^3$ by \cite[Theorem A]{fur93} and the condition (I\hspace{-.1em}I) holds for $S_1$ and $S_2$.
By the choice of $r_1$ and $r_2$, 
we have $S_1 \cap S_2=\psi_1(\wt{S}_1 \cap \wt{S}_2)=l$. 
Since 
\[
(\psi_1|_{\wt{S}_2})^{-1}(l)= E_{\psi_1}|_{\wt{S}_2}=\Sigma_2+C
\]
for some ruling $C$ of $\wt{S}_2 \cong \F_2$, we have $S_2 \setminus (S_1 \cap S_2) \cong \F_2 \setminus (\Sigma_2 \cup C) \cong \A^2$. Hence the condition (I\hspace{-.1em}I\hspace{-.1em}I) holds for $S_1$ and $S_2$.

Next we construct desired centers of blow-ups. On $\wt{S}_2 \cong \F_2$, we take general smooth members 
\[
\wt{C}_1 \in \left| f_2 \right|, \wt{C}_3 \in \left| \Sigma_2 +2f_2 \right|\textup{ and } \wt{C}_4 \in \left| \Sigma_2 +3f_2 \right|.
\]
Since $\Sigma_2 +3f_2$ is very ample on $\F_2$, we can choose $\wt{C}_4$ such that $\wt{C}_4$ contains $q\coloneqq \Sigma_2 \cap C$ and its tangent line at $q$ is different from that of the ruling of $E_{\psi_1} \cong \F_1$ containing $q$.

Fix $d \in\{1, 3, 4\}$. By construction, the curve
\[
C_d\coloneqq \psi_1(\wt{C}_d)
\]
is smooth and rational of degree $d$. Since $\wt{C}_d \cap E_{\psi_1}= \wt{C}_d \cap (\Sigma_2 \cup C)$ consists of a single point, so does $C_d \cap l=C_d \cap S_1$. Hence the triplet $(S_1, S_2, C_d)$ satisfies the condition (I). 
As $C_d \setminus (S_1 \cap C_d) \cong \A^1$ and $S_2 \setminus (S_1 \cap S_2) \cong \A^2$, the triplet also satisfies the condition (1). Hence the triplet $(S_1, S_2, C_d)$ satisfies the conditions (I)--(I\hspace{-.1em}I\hspace{-.1em}I), (I') and (1).
\end{proof}

\begin{prop}\label{prop:const3.5}
There exists a triplet $(V, D_1 \cup D_2, U)$ with $(\dagger)$ such that $V$ is a blow-up of $\Q^3$ along a conic.
\end{prop}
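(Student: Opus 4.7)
The plan is to adapt the construction of Proposition \ref{prop:const3} within the framework of Lemma \ref{lem:const1}. I will retain $S_1 \cong \Q^2_0$, a tangent hyperplane section of $\Q^3$, so that $\Q^3 \setminus S_1 \cong \A^3$ by \cite[Theorem A]{fur93} and condition (I\hspace{-.1em}I) holds automatically; I will also take $S_2$ to be an irreducible quadric section of $\Q^3$ containing the conic $C_2$, which immediately yields condition (I').

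The central obstruction is that the non-normal Gorenstein del Pezzo surface $S_2$ of class (C) produced in the proof of Proposition \ref{prop:const3} contains no smooth conic of $\Q^3$. Indeed its normalization is $\F_2$ with pullback of the hyperplane class equal to $\Sigma_2 + 3 f_2$, so a smooth irreducible curve $\tilde{C} \sim a \Sigma_2 + b f_2$ on $\F_2$ mapping to a degree-$2$ curve in $\Q^3$ would require $a + b = 2$; however, the only candidates $|2 f_2|$, $|\Sigma_2 + f_2|$ and $|2 \Sigma_2|$ either have $\Sigma_2$ as a fixed component or are reducible. I therefore plan to replace $S_2$ by a non-normal Gorenstein del Pezzo surface of degree $4$ of class (D) in \cite{a-f83}, whose normalization is $\F_0 = \P^1 \times \P^1$. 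On $\F_0$, a general smooth member of the linear system of bidegree $(1,1)$ is a smooth rational curve that, after the normalization map, should produce a smooth conic $C_2 \subset \Q^3$ of the desired type.

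I would realize $S_2$ of class (D) either by modifying the setup of Proposition \ref{prop:const3}, choosing the analogue of the initial blow-up centre and of the pencil $(r_1, r_2)$ so that the intersection calculation for $(E_{\psi_1}^2 \cdot \tilde{S}_2)$ forces the normalization $\tilde{S}_2 \cong \F_0$ rather than $\F_2$, or by writing $S_2$ down directly as the zero locus of an explicit quadratic form in $\P^4 \supset \Q^3$ whose conductor is a prescribed line. It then remains to verify that $S_2 \setminus (S_1 \cap S_2) \cong \A^2$ (condition (I\hspace{-.1em}I\hspace{-.1em}I)), and to choose the $(1,1)$-curve on $\F_0$ so that its image $C_2$ meets $S_1$ transversally in a single reduced point; this forces $C_2 \setminus (C_2 \cap S_1) \cong \A^1$ and guarantees both condition (I) and the homology isomorphism in condition (1), after which Lemma \ref{lem:const1} and Lemma \ref{lem:condiI} deliver the desired triplet with $(\dagger)$. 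The main difficulty will be to realise the class-(D) surface $S_2$ in $\Q^3$ together with a smooth conic $C_2 \subset S_2$ whose intersection with $S_1$ is exactly one reduced point, all in a single coherent construction.
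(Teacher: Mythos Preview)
Your overall strategy is correct and essentially matches the paper's: the paper also passes to a class-(D) non-normal del Pezzo surface $S_2$ of degree $4$, and it realises this by exactly the modification of Proposition~\ref{prop:const3} you suggest in option~(a). Concretely, one keeps $l,\psi_1,\psi_2,p,r_1,\wt S_1,S_1$ as before but chooses a smooth conic $r_2\subset\P^2$ with $p\notin r_2$ and $r_1\cap r_2$ a single (necessarily tangential) point; then $\wt S_2=\psi_2^{-1}(r_2)\cong\F_0$, the conductor pullback $E_{\psi_1}|_{\wt S_2}$ lies in one ruling class $\Sigma_0$, and $(\psi_1|_{\wt S_2})^{-1}(S_1\cap S_2)=E_{\psi_1}|_{\wt S_2}\cup C$ with $C\in|f_0|$, so $S_2\setminus(S_1\cap S_2)\cong\A^2$.

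There is, however, a genuine computational slip in your choice of $\wt C_2$. On $\wt S_2\cong\F_0$ one has $\sigma_{S_2}^{*}(-K_{S_2})=\sigma_{S_2}^{*}\bigl(\mathcal O_{\Q^3}(1)|_{S_2}\bigr)\sim\Sigma_0+2f_0$ (this is forced: $K_{\F_0}=\sigma_{S_2}^{*}K_{S_2}-\ol E_{S_2}$ with $\ol E_{S_2}\sim\Sigma_0$). Hence a curve $\wt C\sim a\Sigma_0+bf_0$ maps to a curve of degree $2a+b$ in $\Q^3$, and your $(1,1)$-curve gives a \emph{cubic}, not a conic. The only smooth irreducible class yielding a conic is $\wt C_2\in|\Sigma_0|$ with $\wt C_2\neq E_{\psi_1}|_{\wt S_2}$; since $(\Sigma_0\cdot\Sigma_0)=0$, this $\wt C_2$ avoids the conductor locus, so $C_2=\psi_1(\wt C_2)$ is a smooth conic, and $\wt C_2\cap(E_{\psi_1}|_{\wt S_2}\cup C)=\wt C_2\cap C$ is a single point, giving condition~(I) and the homology condition~(1) exactly as you planned. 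With this correction your argument goes through and coincides with the paper's.
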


\begin{proof}
We use the notation $l, \psi_1, \psi_2, p, r_1, \wt{S}_1$ and $S_1$ as in Proposition \ref{prop:const3}. We take a smooth conic $r_2 \subset \P^2$ such that $r_1 \cap r_2$ consists of a single point which is different from $p$. We take 
\[
\wt{S}_2\coloneqq \psi_2^{-1}(r_2)\textup{ and }S_2\coloneqq \psi_1(\wt{S}_2).
\]
Then the pair $(S_1, S_2)$ satisfies the conditions (I\hspace{-.1em}I) and (I'). 

A computation as in the proof of Claim \ref{cl:QF_2} shows that $S_2$ belongs to the class (D) of \cite{a-f83}. More precisely, we have $\wt{S}_2 \cong \F_0$, $E_{\psi_1}|_{\wt{S}_2} \sim \Sigma_0$ and 
\[
(\psi_1|_{\wt{S}_2})^{-1}(S_1 \cap S_2)=E_{\psi_1}|_{\wt{S}_2}+C
\]
for some ruling $C \in \left| f_0 \right|$ on $\wt{S}_2 \cong \F_0$. Hence $S_2 \setminus (S_1 \cap S_2) \cong \wt{S}_2 \setminus (E_{\psi_1} \cup C) \cong \A^2$ and the condition (I\hspace{-.1em}I\hspace{-.1em}I) holds for $S_1$ and $S_2$. 

We take curves 
\[
\wt{C}_2 \in \left| \Sigma_0 \right| \textup{ and } C_2\coloneqq \psi_1(\wt{C}_2)
\]
such that $\wt{C}_2 \neq E_{\psi_1}|_{\wt{S}_2}$. Then $C_2$ is a smooth conic. Since $\wt{C}_2 \cap (E_{\psi_1}|_{\wt{S}_2} \cup C)$ consists of a single point, so does $C_2 \cap S_1$. Hence the triplet $(S_1, S_2, C_2)$ satisfies the conditions (I) and (1) as in the proof of Proposition \ref{prop:const3}.
\end{proof}

\subsection{The type (A4)}\label{sec:constA4}
We construct triplets $(V, D_1 \cup D_2, U)$ with $(\dagger)$ of type (A4).

\begin{prop}\label{prop:const4}
There exists a triplet $(V, D_1 \cup D_2, U)$ with $(\dagger)$ such that $V$ is a blow-up of $V_5$ along a line.
\end{prop}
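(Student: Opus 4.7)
The plan is to apply Lemma \ref{lem:const1} with $W = V_5$ to build a triplet $(S_1, S_2, C)$ where $S_1$ and $S_2$ are hyperplane sections of $V_5$ (so that $S_1 + S_2 \sim -K_{V_5}$, giving (I$'$)) and $C$ is a line on $V_5$. Once the four conditions (I), (I\hspace{-.1em}I), (I\hspace{-.1em}I\hspace{-.1em}I) and (1) of Lemma \ref{lem:const1} are verified, Lemmas \ref{lem:const1} and \ref{lem:condiI} will produce a triplet $(V, D_1 \cup D_2, U)$ with $(\dagger)$ in which $V$ is the blow-up of $V_5$ along $C$, i.e.\ of type (A4).

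For condition (I\hspace{-.1em}I), I would invoke the classical result of Furushima (cf.\ \cite{fur93}) that there exists a non-normal hyperplane section $S_1 \subset V_5$ with $V_5 \setminus S_1 \cong \A^3$, which is in particular contractible and affine. Next I would choose the line $C \subset V_5$ so that it meets $S_1$ transversely at a single point lying off $\Sing\,S_1$; since $(C \cdot S_1) = C \cdot H = 1$ for any line on $V_5$, this is a generic condition on $C$, and it automatically guarantees $C \not\subset S_1$ together with $C \setminus (S_1 \cap C) \cong \A^1$. Finally I would choose $S_2$ among hyperplane sections containing $C$ in such a way that $C \not\subset \Sing\,S_2$ (completing (I)), that $S_2$ is smooth away from $S_1 \cap S_2$ (condition (I\hspace{-.1em}I\hspace{-.1em}I)), and that $S_2 \setminus (S_1 \cap S_2) \cong \A^2$. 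The last property makes condition (1) of Lemma \ref{lem:const1} immediate, since both $C \setminus (S_1 \cap C) \cong \A^1$ and $S_2 \setminus (S_1 \cap S_2) \cong \A^2$ are contractible, so the inclusion trivially induces an isomorphism on singular homology.

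The main obstacle, and the crux of the proof, is producing such an $S_2$ explicitly. Following the pattern of Proposition \ref{prop:const3}, I would blow up an auxiliary line $L \subset V_5$ via $\psi_1 \colon X \to V_5$; by \cite{m-m81}, $X$ is the Fano 3-fold of No.\,26 and admits a second extremal contraction $\psi_2$, which I would use to construct $\wt{S}_2 \subset X$ and a smooth curve $\wt{C} \subset \wt{S}_2$ whose images $S_2 = \psi_1(\wt{S}_2)$ and $C = \psi_1(\wt{C})$ have the desired properties. After classifying the resulting non-normal del Pezzo surface $S_2$ via \cite{a-f83} in analogy with Claim \ref{cl:QF_2}, I would expect its normalisation to be a Hirzebruch surface and the preimage of the conductor locus $E_{S_2}$ together with $S_1 \cap S_2$ to consist of a section and a ruling; a direct computation along the lines of Proposition \ref{prop:const3} should then establish $S_2 \setminus (S_1 \cap S_2) \cong \A^2$, completing the verification of all hypotheses of Lemma \ref{lem:const1}.
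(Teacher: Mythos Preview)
Your overall framework---apply Lemma \ref{lem:const1} with $W=V_5$, verify (I), (I\hspace{-.1em}I), (I\hspace{-.1em}I\hspace{-.1em}I), (I$'$) and (1), then conclude via Lemma \ref{lem:condiI}---is exactly what the paper does. The difference lies in how the pair $(S_1,S_2)$ and the line $C$ are produced.

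The paper avoids any auxiliary blow-up. It simply takes the two specific hyperplane sections $S_1=H_5^0$ and $S_2=H_5^{\infty}$ already constructed by Furushima in \cite{fur00}. By \cite[Lemma 7]{fur00} the non-normal section $S_2$ belongs to class (C) of \cite{a-f83} with normalisation $\ol{S}_2\cong\F_3$ and conductor locus $\ol{E}_{S_2}=\Sigma_3\cup f_3$; by \cite[Lemma 11]{fur00} one has $S_1\cap S_2=E_{S_2}$, so $S_2\setminus(S_1\cap S_2)\cong\F_3\setminus(\Sigma_3\cup f_3)\cong\A^2$, giving (I\hspace{-.1em}I\hspace{-.1em}I); and by \cite[Lemma 12]{fur00} one has $V_5\setminus S_1\cong\A^3$, giving (I\hspace{-.1em}I). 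The line $C$ is then chosen \emph{inside} $S_2$ as $C=\sigma_{S_2}(\wt{C})$ for any ruling $\wt{C}\in|f_3|$ not lying in $\ol{E}_{S_2}$: this immediately forces $C\not\subset S_1$ and $C\setminus(S_1\cap C)\cong\A^1$, so (I) and (1) follow.

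Your proposed route via an auxiliary blow-up $\psi_1\colon X\to V_5$ along a line has a concrete obstacle: the Fano 3-fold $X$ of No.\,26 has both extremal contractions birational (the other one, $\psi_2$, is a blow-down to $\Q^3$ along a twisted cubic), not a $\P^1$-bundle over $\P^2$. Thus the mechanism of Proposition \ref{prop:const3}---pulling back plane curves $r_i$ by a $\P^1$-bundle $\psi_2$ to obtain ruled surfaces $\wt{S}_i$---has no direct analogue here, and your sketch does not say how $\wt{S}_2$ and $\wt{C}$ would actually be produced from $\psi_2$. The paper's approach bypasses this by citing the ready-made Furushima sections; that is the missing ingredient in your proposal.
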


\begin{proof}
We take hyperplane sections
\[
S_1=H_5^0 \textup{ and } S_2=H_5^{\infty}
\]
as in \cite{fur00}. Then the pair $(S_1, S_2)$ satisfies the condition (I'). Note that $S_2$ is non-normal. By \cite[Lemma 7]{fur00}, the normalization $\ol{S}_2$ of $S_2$ is isomorphic to $\F_3$. Hence $S_2$ belongs to the class (C) of \cite{a-f83} and the conductor locus $\ol{E}_{S_2}$ of $\ol{S}_2$ is decomposed as $\Sigma_3 \cup f_3$. Since the intersection $S_1 \cap S_2$ is the conductor locus of $S_2$ by \cite[Lemma 11]{fur00}, we have $S_2 \setminus (S_1 \cap S_2) \cong \F_3 \setminus (\Sigma_3 \cup f_3) \cong \A^2$. Hence the condition (I\hspace{-.1em}I\hspace{-.1em}I) holds for $S_1$ and $S_2$. The pair also satisfies the condition (I\hspace{-.1em}I) since $V_5 \setminus S_1 \cong \A^3$ by \cite[Lemma12]{fur00}.

We take $\wt{C}_1 \subset \ol{S}_2$ and $C_1 \subset S_2$ as
\[
\wt{C}_1 \in \left| f_3 \right| \textup{ with } \wt{C}_1 \not \subset \ol{E}_{S_2} \textup{ and } C_1\coloneqq \sigma_{S_2}(\wt{C}_1),
\]
where $\sigma_{S_2} \colon \ol{S}_2 \rightarrow S_2$ is the normalization. Then $C_1$ is a line in $S_2$ different from $E_{S_2}$ and hence the triplet $(S_1, S_2, C_1)$ satisfies the condition (I). As $C_1 \setminus (S_1 \cap C_1) \cong \A^1$ and $S_2 \setminus (S_1 \cap S_2) \cong \A^2$, the triplet $(S_1, S_2, C_1)$ also satisfies the condition (1). Hence the triplet $(S_1, S_2, C_1)$ satisfies the conditions (I)--(I\hspace{-.1em}I\hspace{-.1em}I), (I') and (1).
\end{proof}

\begin{prop}\label{prop:const4.5}
There exists a triplet $(V, D_1 \cup D_2, U)$ with $(\dagger)$ such that $V$ is a blow-up of $V_5$ along a smooth rational curve of degree $d$ for $d=2,3$.
\end{prop}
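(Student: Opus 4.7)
The plan is to construct the examples by applying Lemma \ref{lem:const1}, along the lines of Proposition \ref{prop:const4}. Take $S_1 = H_5^0$ so that $V_5 \setminus S_1 \cong \A^3$ by \cite{fur00}, ensuring condition (I\hspace{-.1em}I). For each $d \in \{2, 3\}$, I seek a hyperplane section $S_2 \in |H|$ of $V_5$ and a smooth rational curve $C_d \subset S_2$ of degree $d$ in $V_5$ such that the triplet $(S_1, S_2, C_d)$ satisfies (I), (I\hspace{-.1em}I\hspace{-.1em}I), and (1). Condition (I') is automatic since $S_1 + S_2 \sim 2H \sim -K_{V_5}$.

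First, a numerical check rules out reusing the non-normal $S_2 = H_5^\infty$ of class (C) from Proposition \ref{prop:const4}. There $S_2 \setminus (S_1 \cap S_2) \cong \A^2$, so condition (1) requires $C_d \cap E_{S_2}$ to be a single point. On $\ol{S}_2 \cong \F_3$ with $\ol{E}_{S_2} \sim \Sigma_3 + f_3$ and $-\sigma_{S_2}^*K_{S_2} \sim \Sigma_3 + 4f_3$, a curve $\wt{C}_d \sim a\Sigma_3 + bf_3$ satisfies $\wt{C}_d \cdot \ol{E}_{S_2} = -2a+b$ and $\deg C_d = a+b$. Solving $-2a+b = 1$ together with $a+b = d$ gives $a = (d-1)/3$, a non-negative integer only for $d = 1, 4$. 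Hence a different hyperplane section is needed for $d = 2, 3$.

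Accordingly, I would take $S_2$ to be another non-normal Gorenstein hyperplane section of $V_5$, belonging to a different class in \cite{a-f83} whose normalization $\ol{S}_2$ is a suitable Hirzebruch surface and whose conductor $\ol{E}_{S_2}$ admits smooth rational curves meeting it in one point with the required degrees. The construction then mirrors Proposition \ref{prop:const4}: find smooth irreducible rational $\wt{C}_d$ on $\ol{S}_2$ such that $\wt{C}_d \cdot (-\sigma_{S_2}^*K_{S_2}) = d$ and $\wt{C}_d$ meets $\ol{E}_{S_2}$ transversely at a single point lying on exactly one of its components, and set $C_d \coloneqq \sigma_{S_2}(\wt{C}_d)$. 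Then $C_d$ is a smooth rational curve of degree $d$ in $V_5$ with $C_d \cap S_1 = C_d \cap E_{S_2}$ a single point, so that $C_d \setminus (S_1 \cap C_d) \cong \A^1$ and $S_2 \setminus (S_1 \cap S_2) \cong \A^2$ are both contractible, yielding condition (1). Conditions (I) and (I\hspace{-.1em}I\hspace{-.1em}I) follow from the construction.

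The main obstacle is identifying the precise non-normal hyperplane section $S_2$ of $V_5$ to use for each $d \in \{2, 3\}$ and verifying the existence of smooth irreducible rational curves on its normalization with the prescribed degree and transverse intersection with the conductor, which amounts to explicit computations on Hirzebruch surfaces analogous to those in Propositions \ref{prop:const3}, \ref{prop:const3.5}, and \ref{prop:const4}. Once the surface and curves are pinned down, the remaining verifications are routine and follow the same template as the earlier propositions.
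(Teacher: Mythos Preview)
Your numerical check ruling out the choice $S_1=H_5^0$, $S_2=H_5^\infty$ for $d=2,3$ is correct, but the proposal then stalls: you gesture at ``another non-normal Gorenstein hyperplane section of $V_5$, belonging to a different class in \cite{a-f83}'' without identifying one or verifying that such a section exists with $S_1\cap S_2$ equal to its conductor locus (which was the special feature of the pair $(H_5^0,H_5^\infty)$ coming from \cite[Lemma~11]{fur00}). For degree $5$ the non-normal, non-conical possibilities are quite constrained, and it is not at all clear that a class-(D) type surface (which is what made the conic case work in $\Q^3$) is available here. As written, this is a plan with its key step missing.

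The paper's argument is much simpler and avoids the search entirely: it \emph{swaps} the roles of the two Furushima sections, taking $S_1=H_5^\infty$ and $S_2=H_5^0$. You implicitly assumed that only $H_5^0$ gives a contractible complement, but \cite[Lemma~12]{fur00} shows that $V_5\setminus H_5^\infty\cong\A^3$ as well, and moreover that $H_5^0\setminus(H_5^0\cap H_5^\infty)\cong\A^2$. So conditions (I\hspace{-.1em}I) and (I\hspace{-.1em}I\hspace{-.1em}I) are immediate. Since $S_1\cap S_2$ is a line and $S_2=H_5^0$ is a \emph{normal} quintic del Pezzo surface, one then needs smooth rational curves $C_d\subset S_2$ of degree $d=2,3$ meeting this line in a single point; the paper cites \cite[Lemma~2.3]{ki05} for their existence. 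The moral is that the obstruction you found for $d=2,3$ is tied to the choice of which section plays the role of $S_1$, not to the pair $(H_5^0,H_5^\infty)$ itself.
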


\begin{proof}
We take hyperplane sections
\[
S_1=H_5^{\infty} \textup{ and } S_2=H_5^{0}
\]
as in \cite{fur00}. Then the pair $(S_1, S_2)$ satisfies the condition (I'). By \cite[Lemma 12]{fur00}, we have $V_5 \setminus S_1 \cong \A^3$ and $S_2 \setminus (S_1 \cap S_2) \cong \A^2$. Hence $S_1$ and $S_2$ satisfy the conditions (I\hspace{-.1em}I) and (I\hspace{-.1em}I\hspace{-.1em}I). 

Fix $d \in \{2, 3\}$ and take $C_d$ as a smooth rational curve of degree $d$ in $S_2$ as in \cite[Lemma 2.3]{ki05}. Since $S_1 \cap S_2$ is a line, the triplet $(S_1, S_2, C_d)$ satisfies the condition (I). As $C_d \setminus (S_1 \cap C_d) \cong \A^1$ and $S_2 \setminus (S_1 \cap S_2) \cong \A^2$ by construction, the triplet $(S_1, S_2, C_d)$ satisfies the condition (1).
\end{proof}

\subsection{Another example of type (A3)}\label{sec:counterexA}

We give an example of triplets with $(\dagger)$ of type (A3) such that the image of its boundary divisors by the blow-up morphism does not satisfy the condition (I\hspace{-.1em}I) of Lemma \ref{lem:const1}.

\begin{ex}\label{ex:counterex}
Take $S_1, S_2$ and $C$ in $\Q^3=\{X_1^2+X_0X_4+X_2X_3=0\} \subset \P^4_{[X_0: \dots :X_4]}$ as
\[
\begin{aligned}
S_1&\coloneqq \{X_1^2+X_0X_4+X_2X_3=0=X_1=0\} \cong \P^1 \times \P^1, \\
S_2&\coloneqq \{X_1^2+X_0X_4+X_2X_3=0=X_0^2+X_1X_2=0\}, \textup{ and }\\
C&\coloneqq \{X_0=X_1=-X_2=X_3-X_4\}.
\end{aligned}
\]
Then $S_1 \cong\P^1 \times \P^1$. Since $\Q^3 \setminus S_1 \cong \mathrm{SL}(2;\C)$, the triplet $(S_1, S_2, C)$ does not satisfy the condition (I\hspace{-.1em}I).

Let $g \colon V \rightarrow \Q^3$ be the blow-up of $\Q^3$ along the line $C$ and $D_i\coloneqq g^{-1}_*(S_i)$ for $i=1,2$. Then the open set $U\coloneqq V \setminus (D_1 \cup D_2)$ is the affine modification of 
$
\{1+x_0x_4+x_2x_3=0\}
$
with the locus 
\[
\left(
\begin{array}{l}
(\{1+x_0x_4+x_2x_3=0, x_0^2+x_2=0, x_0=1\} \\
\subset \{1+x_0x_4+x_2x_3=0, x_0^2+x_2=0\}
\end{array}
\right)
\]
in $\A^4_{(x_0, x_2, x_3, x_4)}$, which is isomorphic to
\[
\begin{aligned}
&\{1+x_0x_4+x_2x_3=0, (x_0^2+x_2)w=x_0-1\}\\
\cong &\{1+x_0x_4+(x_2-x_0^2)x_3=0, x_0=1+x_2w\}\\
\cong &\{1+x_0(x_4-x_0x_3)+x_2x_3=0, x_0=1+x_2w\}\\
\cong &\{1+x_0x_4+x_2x_3=0, x_0=1+x_2w\}\\
\cong &\{1+x_4+x_2(x_3+x_4w)=0, x_0=0\}\\
\cong &\{x_4=x_0=0\} \cong \A^3
\end{aligned}
\]
in $\A^5_{(x_0,x_2,x_3,x_4,w)}$. As $S_1$ and $S_2$ satisfy the conditions (I) and (I'), we have $K_V+D_1+D_2=0.$
Hence $(V, D_1 \cup D_2, U)$ is a triplet with $(\dagger)$ of type (A3).
\end{ex}

\section{Construction of examples of type (B)}\label{sec:constB}

In this section, we construct examples of triplets with $(\dagger)$ of type (B1)--(B3).

\subsection{The characterization of $\A^3$}\label{sec:Kaliman}
We review the characterization of $\A^3$ by Sh.\,Kaliman.

\begin{thm}[cf. \cite{ka02}]\label{thm:2.26}
Let $X$ be an affine 3-fold such that
\begin{enumerate}
\item[\textup{(0)}] $\Pic X=0$ and all invertible functions on $X$ are constants;
\item[\textup{(1)}] The Euler characteristic of $X$ is $\eu(X)=1$;
\item[\textup{(2)}] There exists a Zariski open subset $Z$ of $X$ and a morphism $p\colon Z \rightarrow r$ onto a curve $r$ whose fibers are isomorphic to $\A^2$;
\item[\textup{(3)}] Each irreducible component of $X \setminus Z$ has the trivial Picard group.
\end{enumerate}

Then $X$ is isomorphic to $\A^3$. 
\end{thm}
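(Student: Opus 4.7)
The plan is to use the $\A^2$-fibration $p\colon Z\to r$ to force $X$ to be an $\A^2$-bundle over $\A^1$, and then conclude triviality. First I would determine the base: since each fibre of $p$ is $\A^2$, hence connected with $\eu=1$, multiplicativity of Euler characteristics in a Zariski-locally trivial fibration gives $\eu(Z)=\eu(r)$. The hypotheses $\mathcal{O}(X)^{\ast}=\C^{\ast}$ and $\Pic X=0$ constrain invertible functions and line bundles pulled back from $r$; combined with $\eu(X)=1$ and the decomposition $\eu(X)=\eu(Z)+\eu(X\setminus Z)$, this should pin $r$ down to $\A^1$ (the case $r=\P^1$ being ruled out by Euler-characteristic bookkeeping once the contribution of $X\setminus Z$ is accounted for, and non-rational or punctured-affine $r$ being excluded by the unit-group hypothesis).

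Next I would analyse the boundary $Y \coloneqq X\setminus Z$, whose irreducible components $D_1,\ldots,D_k$ are surfaces with $\Pic D_i=0$ by (3). Using the long exact sequence in compactly supported cohomology for the pair $(X,Y)$, together with the fact that $Z$ has the homotopy type of $r\cong\A^1$ after collapsing the contractible fibres, one obtains sharp constraints on the $D_i$'s. The aim is to exhibit $Y$ as a union of (possibly degenerate) fibres of an extended morphism $\bar p\colon X\to \A^1$; the triviality of $\Pic D_i$ is what allows each boundary component to be promoted to a genuine fibre of an enlarged fibration rather than a more complicated horizontal degeneration.

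Once such an extension is in place, I would show that every fibre of $\bar p$ is isomorphic to $\A^2$. This is the heart of the matter: the hypothetical degenerate fibres are affine surfaces with $\Pic=0$ and no non-constant units, and by the Miyanishi--Sugie--Fujita characterisation of $\A^2$ (or a suitable topological-algebraic variant) they are forced to be $\A^2$ itself. At that stage $\bar p$ is an $\A^2$-fibration over $\A^1$ with every fibre $\A^2$, and by a Kambayashi--Miyanishi-type theorem on triviality of $\A^2$-bundles over affine curves one concludes $X\cong \A^1\times\A^2\cong\A^3$.

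The hardest step I expect is the extension of $p$ across $Y$ and the verification that the new fibres are $\A^2$; this is where one essentially runs the degeneration analysis forming the technical core of Kaliman's original argument. The delicacy lies in eliminating exotic boundary configurations (for instance, a component $D_i$ that dominates $r$ rather than sitting over a single point, or a degenerate fibre that is a union of affine lines rather than a single $\A^2$). Ruling these out requires playing the conditions (0)--(3) off against one another simultaneously --- in particular combining $\eu(X)=1$ with the Picard-triviality of $X$ and of each $D_i$ --- rather than using any single hypothesis in isolation.
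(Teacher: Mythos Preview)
The paper does not contain a proof of this theorem: it is stated with the attribution ``cf.\ \cite{ka02}'' and quoted as a black box from Kaliman's work, with no argument supplied. There is therefore nothing in the paper to compare your proposal against.

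That said, your outline is a reasonable high-level summary of the strategy underlying Kaliman's original proof: reduce to an $\A^2$-fibration over $\A^1$, extend across the boundary, force all fibres to be $\A^2$, and invoke triviality of $\A^2$-bundles over affine curves. Two cautions. First, you should be aware that your sketch glosses over substantial technical content; Kaliman's actual argument is considerably more involved than the four-paragraph plan suggests, particularly in controlling the possible degenerations and in extending the fibration. Second, for the purposes of this paper a proof is neither expected nor appropriate: the result is imported from \cite{ka02} and used as a tool in \S\ref{sec:constB}, so the correct move is simply to cite it.
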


\subsection{The type (B3)}\label{sec:constB3}

With Theorem \ref{thm:2.26}, we can construct a triplet $(V, D_1 \cup D_2, U)$ with $(\dagger)$ of type (B3).

\begin{prop}\label{prop:const5}
Let $x_1, x_2, x_3$ and $y$ be coordinates of $\P\coloneqq \P(1,1,1,2)$ of degree $1,1,1$ and $2$ respectively. Define $S_1$ and $S_2 \subset \P$ as
\[
\begin{aligned}
S_1&\coloneqq \{y=0\} \textup{ and }\\
S_2&\coloneqq \{yx_1+x_2x_3(x_2+x_3)=0\}.
\end{aligned}
\]
Let $\varphi\colon V \rightarrow \P\coloneqq \P(1,1,1,2)$ be the blow-up of $\P$ at the vertex and $D_i\coloneqq \varphi^{-1}_*(S_i)$ for $i=1,2$. Then the following holds:
\begin{enumerate}
\item[\textup{(1)}] $K_V+D_1+D_2=0$.
\item[\textup{(2)}] $D_1$ and $D_2$ are $\Z$-basis of $\Pic V$.
\item[\textup{(3)}] $E_{\varphi} \setminus (D_2 \cap E_{\varphi}) \cong \A^2$.
\item[\textup{(4)}] $\P \setminus (S_1 \cup S_2) \cong \A^2 \times \C^*$.
\item[\textup{(5)}] $V \setminus (D_1 \cup D_2) \cong \A^3$.
\end{enumerate}
In particular, $(V, D_1 \cup D_2, \A^3)$ is a triplet with $(\dagger)$ of type \textup{(B3)}.
\end{prop}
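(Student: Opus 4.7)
The plan is to verify the five assertions in order; the concluding claim then follows since $\A^3$ is contractible, $V \cong \P_{\P^2}(\mc{O} \oplus \mc{O}(2))$ is of type \textup{(B3)}, and $(\dagger)$ is assertion (1).

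First I would identify $V$ with the $\P^1$-bundle $\pi \colon \P_{\P^2}(\mc{O} \oplus \mc{O}(2)) \to \P^2$, the standard resolution of the $\frac{1}{2}(1,1,1)$-vertex of $\P(1,1,1,2)$: the exceptional divisor $E_\varphi$ is the section with normal bundle $\mc{O}(-2)$, while $D_1 = \varphi^{-1}_*(S_1)$ is the other section (since $v \notin S_1$), with normal bundle $\mc{O}(2)$. With $H \coloneqq \pi^* \mc{O}_{\P^2}(1)$, we obtain $\Pic V = \Z D_1 \oplus \Z H$ together with $D_1 \cdot f = 1$, $D_1 \cdot E_\varphi = 0$, and $D_1^2 = 2 H \cdot D_1$. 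Writing $D_2 \sim a D_1 + b H$, the computations $D_2 \cdot f = 1$ (a general fiber of $\pi$ meets $S_2$ in a single point, visible in the chart $\{x_1 = 1\} \cong \A^3$) and $D_2 \cdot D_1 \cdot H = 3$ (since $D_2 \cap D_1 = S_2 \cap S_1$ is the union of the three lines $\{x_2 = 0\}$, $\{x_3 = 0\}$, $\{x_2 + x_3 = 0\}$ in $D_1 \cong \P^2$) yield $(a,b) = (1,1)$. Assertion (2) is then immediate since the change of basis from $(D_1, H)$ to $(D_1, D_2)$ is unimodular. For (1), adjunction on $D_1 \cong \P^2$ combined with $K_V \cdot f = -2$ forces $K_V = -2 D_1 - H$, whence $K_V + D_1 + D_2 = 0$. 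Assertion (3) follows from $D_2|_{E_\varphi} = (D_1 + H)|_{E_\varphi} = H|_{E_\varphi}$, a line in $E_\varphi \cong \P^2$.

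For (4), I would work in the affine chart $\P \setminus S_1 \cong \C^3/\Z_2$, where $\Z_2$ acts by simultaneous negation on the three coordinates. The substitution $x_1' \coloneqq x_1 + x_2 x_3(x_2 + x_3)$ is $\Z_2$-equivariant (both $x_1$ and $x_1'$ are antiinvariant) and converts the equation of $S_2$ to $x_1' = 0$, so $\P \setminus (S_1 \cup S_2) \cong (\C^* \times \C^2)/\Z_2$; passing to the $\Z_2$-invariants $(x_2/x_1', x_3/x_1', (x_1')^2)$ identifies this quotient with $\A^2 \times \C^*$.

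For (5), I apply Kaliman's Theorem \ref{thm:2.26} to $X \coloneqq V \setminus (D_1 \cup D_2)$. Condition (0) is immediate from (2): the divisor on $V$ of a unit on $X$ lies in $\Z D_1 \oplus \Z D_2$ and is trivial in $\Pic V$, so it vanishes and the unit is constant, and the same reasoning gives $\Pic X = 0$. Condition (2) takes $Z \coloneqq X \setminus E_\varphi \cong \P \setminus (S_1 \cup S_2) \cong \A^2 \times \C^*$, with $p$ the projection onto $\C^*$ whose fibers are $\A^2$. Condition (3) is exactly assertion (3) above, since $X \setminus Z = E_\varphi \setminus (D_2 \cap E_\varphi) \cong \A^2$. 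The remaining condition (1), $\eu(X) = 1$, is the most delicate step: by additivity it reduces to computing $\eu(V) = 6$, $\eu(D_1) = 3$, $\eu(D_1 \cap D_2) = 4$ (three lines through a common point), and $\eu(D_2) = 6$. For the last, $S_2$ has a unique $A_1$-singularity at $v$ (its tangent cone is the smooth plane $\{x_1 = 0\}$), so $\varphi|_{D_2} \colon D_2 \to S_2$ is the minimal resolution; stratifying $S_2 = (S_2 \cap \{x_1 \neq 0\}) \sqcup (S_2 \cap \{x_1 = 0\}) = \A^2 \sqcup (\text{three rational curves through } v)$ gives $\eu(S_2) = 5$, hence $\eu(D_2) = (5-1) + \eu(\P^1) = 6$. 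Substituting into $\eu(X) = \eu(V) - \eu(D_1 \cup D_2) = 6 - (3 + 6 - 4) = 1$ completes the verification. The main obstacle is this Euler-number bookkeeping for condition (1); everything else reduces to elementary intersection theory on the $\P^1$-bundle or direct coordinate manipulation in the affine chart.
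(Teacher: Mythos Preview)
Your proposal is correct and follows the same overall strategy as the paper: verify (1)--(4) by elementary intersection theory on the $\P^1$-bundle and the equivariant coordinate change, then deduce (5) from Kaliman's characterization of $\A^3$. Two remarks are worth making. First, you do not explicitly check that $X$ is affine, which is a standing hypothesis in Theorem~\ref{thm:2.26}; this follows at once since $D_1+D_2\sim -K_V$ is ample. Second, your Euler-number verification in (5) is correct but more laborious than necessary: rather than computing $\eu(V)$, $\eu(D_1)$, $\eu(D_2)$, $\eu(D_1\cap D_2)$ separately, the paper simply stratifies $X=(E_\varphi\setminus D_2)\sqcup\bigl(\P\setminus(S_1\cup S_2)\bigr)$ and reads off $\eu(X)=\eu(\A^2)+\eu(\A^2\times\C^*)=1+0=1$ directly from assertions (3) and (4), bypassing the analysis of $D_2$ entirely.
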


\begin{proof}
\noindent(1): Let $H\coloneqq \varphi^*\mathcal{O}_\P(2)$, then $D_1 \sim H$. By the Jacobian criterion, the vertex $p$ of $V$ is the unique singular point of $S_2$ and $(S_2, p)$ is an $A_1$-singularity. Hence $E_{\varphi}|_{D_2}$ is a (-2)-curve. 
We write $D_2 \sim aH+bE_{\varphi}$. Then we have $4a=H^2 \cdot D_2=6$ and $4b=E^2_{\varphi} \cdot D_2=-2$. Hence we have $D_2 \sim \frac{3}{2}H-\frac{1}{2}E_{\varphi}$, which proves (1).

\noindent(2): Let $F$ be the pullback of $\mathcal{O}_{\P^2}(1)$ by the $\P^1$-bundle structure of $V=\P_{\P^2}(\mathcal{O} \oplus \mathcal{O}(2))$. By Theorem \ref{thm:2.3} and (1), we have $D_2 \sim H+F$ and $\Pic V=\Z[H] \oplus \Z[F]=\Z[D_1] \oplus \Z[D_2]$.

\noindent(3): Since $(E_{\varphi} \cdot D_2^2)=1$, the intersection $D_2|_{E_{\varphi}}$ is a line in $E_{\varphi} \cong \P^2$ and hence $E_{\varphi} \setminus (D_2 \cap E_{\varphi}) \cong \A^2$.

\noindent(4): Let $\alpha$ be the involution on $\A^3_{(x_1, x_2, x_3)}$ which sends $a$ to $-a$. 
Let $\pi\colon \A^3 \rightarrow \A^3 /\{\mathrm{id}, \alpha\}$ be the quotient morphism.
By regarding $\{y \neq 0\} \subset \P$ as $\A^3 /\{\mathrm{id}, \alpha\}$, we have an isomorphism 
\[
\P \setminus (S_1 \cup S_2)\cong(\A^3 /\{\mathrm{id}, \alpha\}) \setminus \pi(\{x_1+x_2x_3(x_2+x_3)=0\}).
\] 
Consider the polynomial automorphism $\beta$ of $\A^3$ such that 
\[
\beta(x_1)=x_1-x_2x_3(x_2+x_3), \beta(x_2)=x_2, \beta(x_3)=x_3.
\]
Since $\alpha$ commutes with $\beta$, we have the desired isomorphism 
\[
\begin{aligned}
\P \setminus (S_1 \cup S_2)
\cong&(\A^3 /\{\mathrm{id}, \alpha\}) \setminus \pi(\{\beta(x_1+x_2x_3(x_2+x_3))=0\})\\
\cong&
(\A^3 /\{\mathrm{id}, \alpha\}) \setminus \pi(\{x_1=0\})\\
\cong&
(\A^2_{(x_2, x_3)} \times \C^*_{(x_1)})/\{\mathrm{id}, \alpha\} \\
\cong& \A^2 \times \C^*.
\end{aligned}
\]
\noindent(5): By (1), the compliment $V \setminus (D_1 \cup D_2)$ is affine. By (2) and \cite[Proposition 1.1(1)]{fuj82}, the condition (0) of Theorem \ref{thm:2.26} holds for $V \setminus (D_1 \cup D_2)$. By (3) and (4), we have $\eu(V \setminus (D_1 \cup D_2))=\eu(E_{\varphi} \setminus (D_2 \cap E_{\varphi}))+\eu(\P \setminus (S_1 \cup S_2))=\eu(\A^2)+\eu(\A^2\times \C^*)=1$. Hence the condition (1) of Theorem \ref{thm:2.26} holds for $V \setminus (D_1 \cup D_2)$. Applying Theorem \ref{thm:2.26} by setting $X\coloneqq V \setminus (D_1 \cup D_2)$ and $Z \coloneqq V \setminus (D_1 \cup D_2 \cup E_{\varphi}) \cong \P \setminus (S_1 \cup S_2)$, we obtain (5).
\end{proof}

\subsection{The types (B1) and (B2)}\label{sec:constB12}

To construct examples of triplets with $(\dagger)$ of type (B1) and (B2), we use the following lemma, which is a corollary of Theorem \ref{thm:2.26}.

\begin{lem}\label{lem:condi2}
Let $\varphi\colon V \rightarrow \P^2$ be a $\P^1$-bundle. Let $D_1$ and $D_2 \subset V$ be irreducible and generically birational sections of $\varphi$ which satisfy the following:
\begin{enumerate}
\item[\textup{(0')}] $D_1$ and $D_2$ are $\Z$-basis of $\Pic V$ and $D_1+D_2$ is ample.
\item[\textup{(1')}] There is the unique point $p \in \P^2$ such that $\varphi^{-1}(p) \subset D_1 \cup D_2$.
\item[\textup{(2')}] $\varphi(D_1 \cap D_2) \subset \P^2$ is a line containing $p$.

\end{enumerate}
Then the open subvariety $U\coloneqq V \setminus (D_1 \cup D_2)$ is isomorphic to $\A^3$.
\end{lem}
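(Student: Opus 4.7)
The plan is to verify the four hypotheses of Kaliman's characterization (Theorem \ref{thm:2.26}) for the open subvariety $U$. Affineness of $U$ is immediate from the ampleness of $D_1+D_2$ in (0'). For condition (0) of Theorem \ref{thm:2.26}, the excision sequence $\Z[D_1] \oplus \Z[D_2] \to \Pic V \to \Pic U \to 0$ combined with (0') forces $\Pic U = 0$; moreover, any unit $f \in \mathcal{O}(U)^*$ has $\div f$ supported on $D_1 \cup D_2$ and principal in $\Pic V$, so the $\Z$-linear independence of $[D_1]$ and $[D_2]$ forces $\div f = 0$, making $f$ a nowhere-vanishing regular function on the projective variety $V$, hence a constant.

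Let $l \coloneqq \varphi(D_1 \cap D_2)$, which is a line containing $p$ by (2'), and let $L \coloneqq \varphi^{-1}(l)$, a Hirzebruch surface. Set $Z \coloneqq U \setminus L$. Over $\P^2 \setminus l \cong \A^2$, the divisors $D_1, D_2$ restrict to honest disjoint sections of $\varphi$: honest because $p$ is the only point whose fiber lies in $D_1 \cup D_2$ by (1'), and disjoint because $D_1 \cap D_2 \subset \varphi^{-1}(l)$. Thus $Z \to \A^2$ is a $\C^*$-torsor, necessarily trivial since $\Pic \A^2 = 0$, yielding $Z \cong \A^2 \times \C^*$. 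Projection to the second factor is the $\A^2$-fibration over a curve required by condition (2), and $\eu(Z) = 0$.

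It remains to prove $U \setminus Z = L \setminus ((D_1 \cup D_2) \cap L) \cong \A^2$, which simultaneously delivers $\eu(U) = 1$ (condition (1)) and the triviality of the Picard group of the unique irreducible component of $U \setminus Z$ (condition (3)). The crucial step is the set-theoretic identification $(D_1 \cup D_2) \cap L = \sigma \cup F$, where $F \coloneqq \varphi^{-1}(p)$ and $\sigma$ is a common section of $\varphi|_L \colon L \to l$: the generic section components of $D_1 \cap L$ and $D_2 \cap L$ must coincide because $\varphi(D_1 \cap D_2) = l$ together with birationality forces $D_1$ and $D_2$ to meet the generic fiber of $L$ at the same single point, while no additional fiber component can appear by (1'). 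After removing $F$, the bundle $L \setminus F \to l \setminus \{p\} \cong \A^1$ trivializes to $\P^1 \times \A^1$, and a M\"obius-type change of fiber coordinate centred at $\sigma$ exhibits the complement of any section in $\P^1 \times \A^1$ as $\A^2$. The main obstacle is this set-theoretic identification $(D_1 \cup D_2) \cap L = \sigma \cup F$; once it is in place, Theorem \ref{thm:2.26} immediately gives $U \cong \A^3$.
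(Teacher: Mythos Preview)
Your proof is correct and follows essentially the same route as the paper: verify Kaliman's criterion (Theorem~\ref{thm:2.26}) via the decomposition $Z = U \setminus \varphi^{-1}(l) \cong \A^2 \times \C^*$ and $U \cap \varphi^{-1}(l) \cong \A^2$. The only differences are cosmetic---you argue condition~(0) directly rather than citing \cite[Proposition~1.18(1)]{fuj82}, and you spell out the set-theoretic identification $(D_1 \cup D_2)\cap L = \sigma \cup F$ that the paper compresses into ``condition~(2') implies $U \cap \varphi^{-1}(l) \cong \A^2$''.
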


\begin{proof}
Since $D_1+D_2$ is ample, the variety $U$ is affine. By \cite[Proposition 1.18 (1)]{fuj82}, the condition (0') implies the condition (0) of Theorem \ref{thm:2.26} for $U$.

Let $l\coloneqq \varphi(D_1 \cap D_2)$. Then $V_0\coloneqq V \setminus (\varphi^{-1}(l) \cup D_1)$ is $\A^1$-bundle over $\A^2$ by the condition (1') and \cite[\S4.1]{miy78}. Since $V_0 \cap D_2$ is a section of $V_0$ by (1') and (2'), it is easy to check that
\[
U \cap (V \setminus \varphi^{-1}(l)) \cong V_0 \cap (V \setminus D_2) \cong \A^2 \times \C^*. 
\]
Note that the condition (2') implies that $U \cap \varphi^{-1}(l) \cong \A^2$. Hence $U$ satisfies the conditions (2) and (3) of Theorem \ref{thm:2.26} if we set $Z\coloneqq U \cap (V \setminus \varphi^{-1}(l))$. 

Since $\eu (U)=\eu(Z)+\eu(U \setminus Z)=1$, the condition (1) of Theorem \ref{thm:2.26} is also satisfied. Hence we have the assertion by Theorem \ref{thm:2.26}.
\end{proof}

\begin{rem}\label{rem:counterex}
The conditions as in Lemma \ref{lem:condi2} is not necessary to construct triplets $(V, D_1 \cup D_2, U)$ with $(\dagger)$ of type (B). For example, let $(V, D_1 \cup D_2, U)$ be a triplet as in Proposition \ref{prop:const5}. The intersection $D_1 \cap D_2$ consists of three curves, but the conditions (1') and (2') implies the intersection consists of at most two curves.
\end{rem}

We can now construct examples of triplets with $(\dagger)$ of type (B1) and (B2). 

\begin{prop}\label{prop:const7}
There exist divisors $D_1$ and $D_2$ in a smooth 3-fold $V \subset \P^2 \times \P^2$, which is a divisor of bidegree $(1,2)$, such that $(V, D_1 \cup D_2, \A^3)$ is a triplet with $(\dagger)$ of type \textup{(B1)}.
\end{prop}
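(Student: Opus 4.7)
The plan is to apply Lemma~\ref{lem:condi2} to the $\P^1$-bundle projection $\varphi\coloneqq\varphi_2\colon V\to\P^2_y$ onto the second factor. Writing the bidegree $(1,2)$ defining equation of $V$ as $F=x_0Q_0(y)+x_1Q_1(y)+x_2Q_2(y)=0$, the projection $\varphi$ is a $\P^1$-bundle exactly when $Q_0,Q_1,Q_2$ have no common zero on $\P^2_y$. A divisor of class $aH_1+bH_2$ is a generically birational section of $\varphi$ iff $a=1$, and since $(\dagger)$ forces $D_1+D_2\sim -K_V\sim 2H_1+H_2$ with $\{D_1,D_2\}$ a $\Z$-basis of $\Pic V$, we may (up to swap) take $D_1\sim H_1$ and $D_2\sim H_1+H_2$; concretely $D_1=V\cap\{L(x)=0\}$ for a linear form $L$ and $D_2=V\cap\{M(x,y)=0\}$ for a bidegree $(1,1)$ form $M=\sum_i x_iM_i(y)$.

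The main obstacle is enforcing (1') and (2'). A direct determinantal computation with $L=x_0$ shows that $\varphi(D_1\cap D_2)$ is supported on the cubic $\{M_1Q_2-M_2Q_1=0\}\subset\P^2_y$, which is generically irreducible so (2') typically fails; moreover the fibers $\varphi^{-1}(q)$ contained in $D_1\cup D_2$ correspond to $q$ at which the coefficient vector of $L$, respectively of $M(\cdot,q)$, is projectively proportional to $[Q_0(q)\colon Q_1(q)\colon Q_2(q)]$, and for generic $L$ the $D_1$-condition alone already gives four such $q$ (the preimages under $\mu\colon y\mapsto[Q_0(y)\colon Q_1(y)\colon Q_2(y)]$), failing (1'). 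The key specialization is $Q_2=y_0^2$, $M_1=y_0$, $M_2=0$: the cubic becomes $y_0^3$, a triple line, and the quadruple $\mu$-preimage of $[0\colon 0\colon 1]$ collapses to the single totally ramified point $[0\colon 1\colon 0]$ on this line, killing both obstructions simultaneously.

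For the concrete verification I take $Q_0=y_1^2+y_0y_2$, $Q_1=y_2^2+y_0y_1$, $Q_2=y_0^2$, $L=x_0$, and $M=y_2x_0+y_0x_1$. A Jacobian calculation confirms that $V$ is smooth and that $Q_0,Q_1,Q_2$ have no common zero, so $\varphi$ is indeed a $\P^1$-bundle. Condition (0') is immediate from the classes and the ampleness of $-K_V$, condition (2') is immediate from $M_1Q_2-M_2Q_1=y_0^3$, and (1') reduces to a short rank-one check that produces the unique point $p=[0\colon 1\colon 0]$ on the line $\{y_0=0\}$. Irreducibility of $D_1$ and $D_2$ follows from the coprimeness of the relevant quadratic factors together with the base-point-freeness of $|H_1+H_2|$. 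Lemma~\ref{lem:condi2} then yields $V\setminus(D_1\cup D_2)\cong\A^3$, and $(\dagger)$ is clear from the linear equivalences; thus $(V,D_1\cup D_2,\A^3)$ is a triplet with $(\dagger)$ of type (B1).
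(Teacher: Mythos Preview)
Your proof is correct and follows the same strategy as the paper's: both apply Lemma~\ref{lem:condi2} to an explicit pair of divisors on an explicit smooth $(1,2)$-divisor in $\P^2\times\P^2$, verifying conditions (0')--(2') by direct determinantal computation; the paper simply chooses a different explicit example (with the roles of $D_1\sim H_1$ and $D_2\sim H_1+H_2$ swapped and $p=[0{:}0{:}1]$). One small point: your stated reason for the irreducibility of $D_2$ (``base-point-freeness of $|H_1+H_2|$'') does not by itself justify irreducibility of a \emph{specific} member---the clean argument is that $\varphi|_{D_2}$ is one-to-one over $\{y_0\neq 0\}$, so $D_2$ is generically a section and hence a prime divisor.
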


\begin{proof}
In $\P^2_{[x_0:x_1:x_2]} \times \P^2_{[y_0:y_1:y_2]}$, take $V$, $D_1$ and $D_2$ as 
\begin{eqnarray*}
V &=&\{x_0y_0^2+x_1y_1^2+x_2q(y_0,y_1,y_2)=0\},\\
D_1&\coloneqq &\{x_0y_0^2+x_1y_1^2+x_2q(y_0,y_1,y_2)=x_0y_1 +x_2(by_0+ay_1)=0\},\\
D_2&\coloneqq &\{x_0y_0^2+x_1y_1^2+x_2q(y_0,y_1,y_2)=x_2=0\}
\end{eqnarray*}
with $a \in \C$ and $b \in \C^*$ and where $q$ is a quadric form with $q([0:0:1]) \neq 0$. It is easy to check the smoothness of $V$ by the Jacobian criterion. Let 
$\varphi\coloneqq \mathrm{pr}_2\colon V \rightarrow \P^2$, 
which is a $\P^1$-bundle. We note that $\Pic V$ is generated by 
$H_1\coloneqq \mathrm{pr}_1^* \mathcal{O}_{\P^2}(1)$
 and 
$H_2\coloneqq \mathrm{pr}_2^* \mathcal{O}_{\P^2}(1)$
 by the Grothendieck-Lefschetz theorem. 
Since $D_1 \sim H_1+H_2$ and $D_2 \sim H_1$, $D_1$ and $D_2$ satisfy the condition (0') of Lemma \ref{lem:condi2} and $(\dagger)$. An easy computation shows that $D_1 \cap D_2=\{x_0=y_0=y_1=0\} \cup \{x_0=x_2=y_1=0\}$ and the condition (2') holds for $D_1$ and $D_2$. 

For homogeneous elements $g_0, g_1$ and $g_2 \in \C[y_0, y_1, y_2]$, a divisor $\{f= x_0g_0$ $+x_1g_1+x_2g_2=0 \}$ in $V$ contains $\varphi^{-1}([a:b:c])$ only if the rank of
\[
\left(
\begin{array}{ccc}
a^2 & b^2 & q(a,b,c)\\
g_0(a,b,c)& g_1(a,b,c) & g_2(a,b,c)
\end{array}
\right)
\]
is smaller than two. Hence we can check that $D_1$ and $D_2$ satisfy the condition (1') with $p\coloneqq [0:0:1]$.
\end{proof}

\begin{prop}\label{prop:No24}\label{prop:const6}
There exist divisors $D_1$ and $D_2$ in $\P^1 \times \P^2$ such that $(\P^1 \times \P^2, D_1 \cup D_2, \A^3)$ is a triplet with $(\dagger)$ of type \textup{(B2)}.
\end{prop}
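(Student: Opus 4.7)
The plan is to apply Lemma \ref{lem:condi2} to $V=\P^1_{[s:t]}\times\P^2_{[x_0:x_1:x_2]}$ with $\varphi\coloneqq\mathrm{pr}_2\colon V\to\P^2$, which is a $\P^1$-bundle. Writing $H_i\coloneqq\mathrm{pr}_i^{*}\mathcal{O}(1)$, one has $-K_V\sim 2H_1+3H_2$, so $(\dagger)$ forces $D_1+D_2\sim 2H_1+3H_2$. Combined with Lemma \ref{lem:2.5} and the unimodularity needed for condition (0') of Lemma \ref{lem:condi2}, a brief case analysis on the change-of-basis determinant forces, up to swapping $D_1$ and $D_2$, the bidegrees to be $(1,1)$ and $(1,2)$. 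In particular, since a divisor of bidegree $(1,b)$ has $\varphi$-degree one over the base, $D_1$ and $D_2$ are automatically generically birational sections of $\varphi$, as required.

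Concretely, I would take
\[
D_1\coloneqq\{\,sx_0+tx_1=0\,\},\qquad D_2\coloneqq\{\,s(x_0^2-x_1^2)+tx_0x_1=0\,\},\qquad p\coloneqq[0:0:1].
\]
Irreducibility of $D_1$ is immediate, and that of $D_2$ follows from a bidegree argument: any nontrivial factorisation would force the two conics $x_0^2-x_1^2$ and $x_0x_1$ to be proportional, which they are not. Condition (0') is then clear: the matrix $\bigl(\begin{smallmatrix}1 & 1\\1 & 2\end{smallmatrix}\bigr)$ has determinant $1$, so $[D_1],[D_2]$ is a $\Z$-basis of $\Pic V$, and $D_1+D_2=-K_V$ is ample.

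For (1'), I check that $\varphi^{-1}(y)\subset D_1$ iff $x_0(y)=x_1(y)=0$, giving only $y=p$, and $\varphi^{-1}(y)\subset D_2$ iff $y\in V(x_0^2-x_1^2)\cap V(x_0x_1)$; since $x_0x_1=0$ forces $x_0$ or $x_1$ to vanish and then $x_0^2-x_1^2=0$ kills the other, this base locus is again $\{p\}$ set-theoretically. For (2'), a point $y\in\P^2\setminus\{p\}$ lies in $\varphi(D_1\cap D_2)$ iff the vectors $(x_0(y),x_1(y))$ and $(q_1(y),q_2(y))$ are proportional, with $q_1\coloneqq x_0^2-x_1^2$ and $q_2\coloneqq x_0x_1$; the relevant resultant is
\[
x_0q_2-x_1q_1=x_0^2x_1-x_1(x_0^2-x_1^2)=x_1^3,
\]
so $\varphi(D_1\cap D_2)=\{x_1=0\}$ set-theoretically, a line through $p$. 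Lemma \ref{lem:condi2} then yields $V\setminus(D_1\cup D_2)\cong\A^3$, furnishing the desired triplet of type (B2).

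The main obstacle lies in the simultaneous choice of $q_1,q_2$ meeting three competing demands: (i) the four B\'ezout intersections of the conics $V(q_1)$ and $V(q_2)$ must collapse to the single prescribed point $p$, so that $D_2$ contains exactly one $\varphi$-fibre; (ii) the cubic resultant $x_0q_2-x_1q_1$ must reduce set-theoretically to a single line through $p$, equivalently to a linear form raised to the third power; and (iii) $D_2$ must stay irreducible. The pair $q_1=x_0^2-x_1^2$, $q_2=x_0x_1$ engineered above satisfies all three, after which the verifications are routine.
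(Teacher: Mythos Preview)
Your proof is correct and follows essentially the same approach as the paper: both apply Lemma~\ref{lem:condi2} to explicit divisors of bidegree $(1,1)$ and $(1,2)$ on $\P^1\times\P^2$, verifying conditions (0'), (1'), (2') directly. One small imprecision: your irreducibility argument for $D_2$ only rules out a factorisation of type $(1,0)\cdot(0,2)$; a factorisation of type $(0,1)\cdot(1,1)$ would instead require $x_0^2-x_1^2$ and $x_0x_1$ to share a common linear factor, which they do not, so the conclusion still stands.
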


\begin{proof}
In $V=\P^1_{[x_0:x_1]} \times \P^2_{[y_0:y_1:y_2]}$, we take $D_1$ and $D_2$ as 
\begin{eqnarray*}
D_1&\coloneqq &\{x_0y_1 +x_1y_2=0\},\\
D_2&\coloneqq &\{x_0y_1(ay_1+y_2)+x_1(by_1^2+ay_1y_2+y_2^2)=0\}
\end{eqnarray*}
with $a \in \C$ and $b \in \C^*$. 
Since $D_i$ is of bidegree $(1, i)$ for $i=1,2$, the condition (0') and $(\dagger)$ holds for $D_1$ and $D_2$. They also satisfy the condition (1') with $\varphi\coloneqq \mathrm{pr}_2$ and $p\coloneqq [1:0:0]$. This implies that $D_1$ and $D_2$ are irreducible.
An easy computations shows that $D_1 \cap D_2= \{y_1=y_2=0\} \cup \{x_1=y_1=0\}$. Hence $D_1$ and $D_2$ satisfy the condition (2') and we complete the proof.
\end{proof}

Summarizing Propositions \ref{prop:const1}--\ref{prop:const4.5}, \ref{prop:const5}, \ref{prop:const7} and \ref{prop:const6}, we have proved Theorem 1.1 (2).

In the remaining part of this article, we fix a triplet $(V, D_1 \cup D_2, U)$ with $(\dagger)$. We shall prove Theorem \ref{thm:main} (1) to seek a contradiction to Lemma \ref{lem:euler} when $V$ is of type neither (A) or (B). To obtain a contradiction, we compute $\eu(D_1)$, $\eu(D_2)$ and $\eu(D_1 \cap D_2)$. 
From now on, we use Notation \ref{no:Fano}.

\section{Exclusion of imprimitive Fano 3-folds}\label{sec:excimpri}
The aim of this section is to prove Theorem \ref{thm:main} (1) when $V$ is imprimitive. For this reason, we assume that $V$ is the blow-up of a Fano variety $W$ of index $r$ along a smooth curve $C$. We use Notation \ref{no:Fano} and fix $\varphi_1=\Bl_C$. By Proposition \ref{prop:2.6}, we may assume that
\begin{equation}\label{eq:H}
D_1 \sim \varphi^*\mathcal{O}_{W}(1) \textup{ and }D_2 \sim \varphi^*\mathcal{O}_{W}(r-1) - E_{\varphi}.
\end{equation}
\begin{nota} We also use the following notation.
\begin{itemize}
\item $S_i\coloneqq {\varphi_1}_*(D_i) \:\mathrm{for}\: i=1,2.$ 
\item $\tau\colon \wt{S}_2 \rightarrow S_2$: the composition of the normalization and the minimal resolution.
\item $F\coloneqq (S_1 \cap S_2)_{\mathrm{red}}$.
\item $J_1\coloneqq \!\!\{l:$ a curve $ \subset D_1 \cap E_{\varphi_1}\mid \varphi_1(l)$ is a point$\}, N_1\coloneqq \sharp J_1$.
\item $J_2\coloneqq \!\!\{l:$ a curve $ \subset D_2 \cap E_{\varphi_1}\mid \varphi_1(l)$ is a point$\}, N_2\coloneqq \sharp J_2$.
\item $J_{1 \cap 2}\coloneqq \!\!\{l:$ a curve $ \subset (D_1 \cap D_2) \cap E_{\varphi_1} \mid \varphi_1(l)$ is a point$\}, N_{1 \cap 2}\coloneqq \sharp J_{1 \cap 2}$.
\end{itemize}
By (\ref{eq:H}), we have
$
N_1=\sharp(C \cap S_1), N_2=\sharp(C \cap \Sing\, S_2)\textup{ and }N_{1 \cap 2}=\sharp(C \cap S_1 \cap \Sing\, S_2).
$ We
use Notation \ref{no:dP} when $S_2$ is non-normal.
\end{nota}

\subsection{The image of contraction $\varphi_1 \colon V \rightarrow W_1$}\label{sec:image}

\begin{lem}\label{lem:2.9}
It holds that $\eu(D_i)= \eu(S_i)+N_i$ for $i=1, 2$, and $\eu(D_1 \cap D_2)= \eu(F)+N_{1 \cap 2}$.
\end{lem}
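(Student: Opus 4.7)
The plan is to apply additivity of the topological Euler characteristic along the birational morphisms $\varphi_1|_{D_1}\colon D_1\to S_1$, $\varphi_1|_{D_2}\colon D_2\to S_2$, and $\varphi_1|_{D_1\cap D_2}\colon D_1\cap D_2\to F$. In each case the morphism is an isomorphism over the complement of a finite subset of its image, and the fibers over that subset are either a single point or a full $\P^1$-fiber of the $\P^1$-bundle $E_{\varphi_1}\to C$.

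The structure of the exceptional intersections follows from \eqref{eq:H}. Since $C\not\subset S_1$, we have $\varphi_1^*S_1=D_1$, so $D_1\cap E_{\varphi_1}$ is the pullback of $S_1\cap C$ via $E_{\varphi_1}\to C$: it consists of $N_1=\sharp(S_1\cap C)$ disjoint $\P^1$-fibers, each contracted to one of the points of $S_1\cap C$. On the other hand $\varphi_1^*S_2=D_2+E_{\varphi_1}$ shows that $\mathrm{mult}_C S_2=1$ and $C\subset S_2$; at a smooth point $p$ of $S_2$ on $C$ the strict transform $D_2$ meets $\varphi_1^{-1}(p)$ in the single point determined by the direction of $TS_2/TC$ inside $N_{C/W}$, while at a point $p\in\Sing\,S_2\cap C$ the whole $\P^1$-fiber is absorbed into $D_2$. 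Thus $D_2\cap E_{\varphi_1}$ is a section of $E_{\varphi_1}\to C$ together with $N_2$ fiber components, and $\varphi_1|_{D_2}$ is bijective off $\Sing\,S_2\cap C$ with $\P^1$-fibers over those $N_2$ points.

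Plugging these descriptions into the standard stratification formula for $\eu$ gives the three equalities. For $i=1,2$, removing the $N_i$ special points of $S_i$ and replacing them with $\P^1$-fibers contributes $-N_i+2N_i$, yielding $\eu(D_i)=\eu(S_i)+N_i$. For the intersection, $F\cap C=S_1\cap C$ because $C\not\subset S_1$, so the $N_1$ points of $F\cap C$ split into two classes: the $N_{1\cap 2}$ points of $C\cap S_1\cap\Sing\,S_2$, over which $D_1$ and $D_2$ share the full $\P^1$-fiber, so the fiber of $\varphi_1|_{D_1\cap D_2}$ is $\P^1$; and the remaining $N_1-N_{1\cap 2}$ points, over which the $\P^1$-fiber of $D_1$ meets the section of $D_2$ transversely in a single point. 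Additivity then gives $\eu(D_1\cap D_2)=\eu(F)-N_1+2N_{1\cap 2}+(N_1-N_{1\cap 2})=\eu(F)+N_{1\cap 2}$.

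The delicate technical step is the local claim that $D_2$ absorbs the full fiber of $\varphi_1$ at precisely the singular points of $S_2$ on $C$ and meets the other exceptional $\P^1$'s transversely in a single point; once this is granted the Euler-characteristic computations are routine additivity. This local picture is already encoded in the identifications $N_2=\sharp(\Sing\,S_2\cap C)$ and $N_{1\cap 2}=\sharp(S_1\cap C\cap\Sing\,S_2)$ derived from \eqref{eq:H}, so one can take it as input to the proof.
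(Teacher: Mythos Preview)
Your proof is correct and follows essentially the same approach as the paper's: stratify $D_i$ (resp.\ $D_1\cap D_2$) by removing the finitely many exceptional $\P^1$-fibers in $J_i$ (resp.\ $J_{1\cap 2}$), use that $\varphi_1$ is an isomorphism on the complement, and apply additivity of $\eu$. The paper's version is terser---it writes only the chain $\eu(D_i)=\eu(D_i\setminus\bigcup_{l\in J_i}l)+N_i\cdot\eu(\P^1)=\eu(S_i)+N_i$ and says the intersection case is analogous---whereas you spell out the fiber analysis and the $D_1\cap D_2$ computation explicitly.
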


\begin{proof}
We have the first equations as follows:
\[
\begin{aligned}
\eu(D_i)
&=\eu\left(D_i\setminus \bigcup_{l \in J_i}l\right)+\eu\left(\bigcup_{l \in J_i}l\right)
\\
 &=\eu\left(S_i\setminus \bigcup_{l \in J_i}\varphi(l)\right)+N_i \times \eu(\P^1)\\
 &=\eu(S_i)+N_i.
\end{aligned}
\]
The second assertion follows from the same argument.
\end{proof}

\begin{prop}\label{prop:2.10}
We have the following:
\begin{enumerate}
\item[\textup{(1)}] $\eu(F) = \eu(S_1)+\eu(S_2) +B_3(W_1)+2p_a(C)+ N_1 +N_2 - N_{1 \cap 2}-5$.
\item[\textup{(2)}] $N_1 + N_2 -N_{1 \cap 2} \geq 1$ and the equality holds if and only if $(N_1, N_2, N_{1 \cap 2})$ $=$ $(1,0,0)$ or $(1,1,1)$.
\end{enumerate}
\end{prop}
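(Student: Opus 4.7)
The plan is to prove (1) by combining Lemmas~\ref{lem:euler} and~\ref{lem:2.9} with the blow-up formula for Betti numbers, and to prove (2) by reinterpreting $N_1 + N_2 - N_{1\cap 2}$ as the cardinality of a set and exploiting the ampleness of $S_1$.

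For (1), the first step is to substitute the identities $\eu(D_i) = \eu(S_i) + N_i$ (for $i = 1,2$) and $\eu(D_1 \cap D_2) = \eu(F) + N_{1\cap 2}$ from Lemma~\ref{lem:2.9} into the formula $\eu(D_1 \cap D_2) = \eu(D_1) + \eu(D_2) + B_3(V) - 5$ of Lemma~\ref{lem:euler}. Solving for $\eu(F)$ immediately yields
\[
\eu(F) = \eu(S_1) + \eu(S_2) + B_3(V) + N_1 + N_2 - N_{1\cap 2} - 5.
\]
It then remains to replace $B_3(V)$ by $B_3(W_1) + 2p_a(C)$; this follows from the classical blow-up formula $H^i(V,\Z) \cong H^i(W_1,\Z) \oplus H^{i-2}(C,\Z)$ for the blow-up of a smooth 3-fold along a smooth curve, together with $B_1(C) = 2p_a(C)$.

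For (2), the identifications recorded just above give $N_1 = \sharp(C \cap S_1)$, $N_2 = \sharp(C \cap \Sing S_2)$, and $N_{1\cap 2} = \sharp(C \cap S_1 \cap \Sing S_2)$, so inclusion--exclusion yields
\[
N_1 + N_2 - N_{1\cap 2} = \sharp\bigl((C \cap S_1) \cup (C \cap \Sing S_2)\bigr).
\]
Since $D_1$ is irreducible and maps birationally to its image, the projection formula gives $\varphi_{1*}D_1 = S_1 \sim \varphi_{1*}\varphi_1^*\mathcal{O}_{W_1}(1) = \mathcal{O}_{W_1}(1)$, so $S_1$ is linearly equivalent to the ample generator of $\Pic W_1$. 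In particular $(S_1 \cdot C) > 0$, hence $C \cap S_1 \neq \emptyset$ and $N_1 \geq 1$, which establishes the claimed inequality. If equality holds, the union $(C \cap S_1) \cup (C \cap \Sing S_2)$ consists of a single point $p$; since $N_1 \geq 1$, necessarily $p \in C \cap S_1$, so $N_1 = 1$. The two possibilities $p \notin \Sing S_2$ and $p \in \Sing S_2$ then correspond to $(N_1, N_2, N_{1\cap 2}) = (1,0,0)$ and $(1,1,1)$ respectively.

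The whole argument is essentially bookkeeping, so no serious obstacle is expected; the only steps requiring brief justifications are the invocation of the blow-up formula for $B_3$ in (1) and the verification via the projection formula that $S_1$ is ample in (2).
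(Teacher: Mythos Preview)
Your proof is correct and follows essentially the same approach as the paper. For (1) both arguments combine Lemma~\ref{lem:euler}, Lemma~\ref{lem:2.9}, and the blow-up formula $B_3(V)=B_3(W_1)+2p_a(C)$; for (2) the paper states the two inequalities $N_1\geq 1$ and $N_{1\cap 2}\leq \max\{N_1,N_2\}$ directly, whereas your inclusion--exclusion reading $N_1+N_2-N_{1\cap 2}=\sharp\bigl((C\cap S_1)\cup(C\cap\Sing S_2)\bigr)$ packages the same facts and makes the equality-case analysis marginally cleaner.
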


\begin{proof}
(1) The assertion holds by Lemma \ref{lem:euler}, Lemma \ref{lem:2.9} and the equation $B_3(V)=B_3(W_1)+2p_a(C)$. \\
(2) The desired inequality is given by following inequalities: 
\[
N_1\geq 1\textup{ and }\mathrm{max}\{N_1, N_2\} \geq N_{1 \cap 2}.
\]
\end{proof}

\begin{lem}\label{lem:deg}
It holds that $\eu(F) \leq 1 + (S_1^2 \cdot S_2)$.
\end{lem}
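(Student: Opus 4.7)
The plan is to reduce the inequality to a combinatorial estimate on the Gorenstein del Pezzo surface $S_2$. From (\ref{eq:H}) one has $-K_{W_1} \sim S_1 + S_2$, so adjunction on the Cartier divisor $S_2$ in the smooth $W_1$ gives $-K_{S_2} \sim S_1|_{S_2}$; hence $S_2$ is a Gorenstein del Pezzo surface of degree $d := (-K_{S_2})^2 = (S_1^2 \cdot S_2)$ and $S_1|_{S_2}$ is ample. Writing $F = F_1 \cup \cdots \cup F_n$ for the decomposition into irreducible components, the effective Cartier divisor $S_1|_{S_2}$ has support $F$, so it takes the form $\sum_i m_i F_i$ with $m_i \geq 1$; ampleness gives $(F_i \cdot S_1|_{S_2})_{S_2} \geq 1$ for every $i$, and summing yields
\[
n \leq \sum_i m_i (F_i \cdot S_1|_{S_2})_{S_2} = (S_1|_{S_2})^2 = d.
\]

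Next I would estimate $\eu(F)$ through the normalization $\nu \colon \widetilde{F} \to F$. Writing $g_i$ for the genus of $\widetilde{F}_i$ and $b_p = |\nu^{-1}(p)|$ for the number of branches of $F$ at each $p \in \Sing\,F$, the standard formula
\[
\eu(F) = 2n - 2\sum_i g_i - \sum_{p \in \Sing\,F}(b_p - 1)
\]
combined with $g_i \geq 0$ gives $\eu(F) \leq 2n - \sum_p(b_p - 1)$.

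The crucial input is the connectedness of $F$. Since $S_1|_{S_2}$ is ample on the irreducible projective surface $S_2$, the zero scheme $S_1 \cap S_2$ is connected, so $F = (S_1 \cap S_2)_{\mathrm{red}}$ is connected too. Let $c_p$ be the number of components of $F$ passing through $p \in \Sing\,F$; form the intersection graph on the vertex set $\{F_1, \ldots, F_n\}$ by inserting, for each such $p$, a spanning tree on the $c_p$ components through $p$, which contributes $c_p - 1$ edges. Connectedness of $F$ forces this graph to be connected, so $\sum_p(c_p - 1) \geq n - 1$; and since each component passing through $p$ contributes at least one branch there, $b_p \geq c_p$. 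Combining these,
\[
\eu(F) \leq 2n - (n-1) = n + 1 \leq d + 1 = 1 + (S_1^2 \cdot S_2),
\]
which is the desired inequality.

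The hardest point is the combinatorial step $\sum_p(b_p - 1) \geq n - 1$: one must handle singular points where a single component already has multiple branches (a node or cusp of some $F_i$, with $c_p = 1$ but $b_p \geq 2$) on the same footing as genuine multi-component crossings, which is why the argument is phrased first via $c_p$ and then upgraded by $b_p \geq c_p$. A minor side-issue is that $S_2$ need not be normal, but since $S_2$ is Cartier in the smooth $W_1$ it is Gorenstein, so the adjunction formula and the Cartier--Weil intersection pairing $(F_i \cdot S_1|_{S_2})_{S_2}$ remain valid.
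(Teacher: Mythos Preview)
Your proof is correct and rests on the same two ingredients as the paper's: the connectedness of $F$ (from ampleness of $S_1|_{S_2}$) and the bound $n \leq (S_1^2 \cdot S_2)$ on the number of irreducible components (from ampleness of $S_1$ in $W_1$). Where you diverge is in the passage from these ingredients to $\eu(F) \leq n+1$: you invoke the normalization formula $\eu(F) = 2n - 2\sum g_i - \sum_p (b_p-1)$ and a graph-theoretic argument to show $\sum_p(b_p-1) \geq n-1$, whereas the paper simply observes
\[
\eu(F) = B_0(F) - B_1(F) + B_2(F) \leq B_0(F) + B_2(F) = 1 + n,
\]
using nothing more than the nonnegativity of $B_1$. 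Your combinatorial computation is in effect a decomposition of $B_1(F)$ as $2\sum g_i + \sum_p(b_p-1) - (n-1)$ together with a verification that each piece is nonnegative; it is valid, but the paper's one-line Betti-number argument achieves the same conclusion without any of that machinery. Your adjunction remark that $S_1|_{S_2} \sim -K_{S_2}$ is also unnecessary here (the paper uses only ampleness), though it does no harm.
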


\begin{proof}
Since $S_1|_{S_2} \subset S_2$ is ample, its support is connected. Since $S_1 \subset W_1$ is ample, we have $B_2(F) \leq (S_1^2 \cdot S_2)$. Hence $\eu(F) \leq B_0(F) + B_2(F) \leq 1 + (S_1^2 \cdot S_2)$.
\end{proof}

\begin{prop}\label{prop:W_1}
It holds that $W_1 = \P^3, \Q^3 $ or $ V_5$.
\end{prop}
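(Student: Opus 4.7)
The plan is to extract an Euler-characteristic inequality from Proposition~\ref{prop:2.10} and Lemma~\ref{lem:deg} and then confront it with the list of Fano $3$-folds of Picard rank $1$. Combining Proposition~\ref{prop:2.10}(1),(2) with Lemma~\ref{lem:deg}, and discarding the nonnegative contributions $2p_a(C)$ and $N_1+N_2-N_{1\cap 2}-1$, one reaches the master inequality
\[
(\ast)\qquad \eu(S_1)+\eu(S_2)+B_3(W_1)\le 5+(S_1^2\cdot S_2).
\]
Write $H=H_{W_1}$ for the ample generator of $\Pic W_1$, let $r$ be the Fano index of $W_1$, and set $d=H^3$. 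By (\ref{eq:H}) one has $S_1\in|H|$ and $S_2\in|(r-1)H|$, so $(S_1^2\cdot S_2)=(r-1)d$. Because $D_2$ is a prime effective divisor with $D_2\sim\varphi_1^*\mc O_{W_1}(r-1)-E_{\varphi_1}$, effectivity already forces $r\ge 2$; hence $W_1$ is one of $\P^3$, $\Q^3$, $V_1,\dots,V_5$.

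Next I would bound $\eu(S_i)$ from below. By adjunction, $-K_{S_1}=(r-1)H|_{S_1}$ and $-K_{S_2}=H|_{S_2}$ are ample, so $S_1$ and $S_2$ are (irreducible) Gorenstein del Pezzo surfaces of degrees $K_{S_1}^2=(r-1)^2d$ and $K_{S_2}^2=(r-1)d$, each embedded in the smooth $3$-fold $W_1$. Trivially $\eu(S_i)\ge 1$; moreover, as soon as the corresponding degree is at least $4$, Lemma~\ref{lem:cone} rules out $S_i$ being a cone, and Theorem~\ref{thm:dP}(4) sharpens the bound to $\eu(S_i)\ge 3$.

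Finally I would test $(\ast)$ against each admissible $W_1$ using standard classification data. For $\P^3$ $(r=4,\,d=1)$, $\Q^3$ $(r=3,\,d=2)$ and $V_5$ $(r=2,\,d=5)$, one has $B_3(W_1)=0$ and $(\ast)$ holds comfortably. For $V_4$ both del Pezzo degrees equal $4$, so the refined bound forces $\eu(S_i)\ge 3$; combined with $B_3(V_4)=4$ this pushes the left-hand side of $(\ast)$ to at least $10$, whereas the right-hand side is $9$, a contradiction. For $V_3$, applying $(\ast)$ with $\eu(S_i)\ge 1$ would require $B_3(V_3)\le 6$, contradicting $B_3(V_3)=10$. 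For $V_2$ and $V_1$, the values $B_3(V_2)=20$ and $B_3(V_1)=42$ dwarf the respective right-hand sides $5+(r-1)d=7$ and $6$. The main obstacle is the borderline case $V_4$, where the inequality is tight and one genuinely needs the cone-exclusion of Lemma~\ref{lem:cone} to reach the stronger $\eu(S_i)\ge 3$; the remaining exclusions dissolve from the rapid growth of $B_3(V_d)$ as $d$ decreases relative to the modest right-hand side $5+(r-1)d$.
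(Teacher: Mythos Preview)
Your proof is correct and follows essentially the same route as the paper: both derive the inequality $\eu(S_1)+\eu(S_2)+B_3(W_1)\le 5+(S_1^2\cdot S_2)$ from Proposition~\ref{prop:2.10} and Lemma~\ref{lem:deg}, then eliminate $V_1,\dots,V_4$ using the known values of $B_3(V_d)$ together with the cone-exclusion Lemma~\ref{lem:cone} and Theorem~\ref{thm:dP}(4) for the borderline case $V_4$. The only cosmetic differences are that the paper cites \cite[Proposition~5.12]{m-m83} to restrict $W_1$ to $\P^3,\Q^3,V_1,\dots,V_5$ (whereas you argue $r\ge 2$ from effectivity of $D_2$), and the paper first compresses the case analysis into the single bound $3+d\ge B_3(V_d)$ before isolating $d=4$; your ``trivially $\eu(S_i)\ge 1$'' is exactly the paper's appeal to Theorem~\ref{thm:dP}.
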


\begin{proof}
By \cite[Proposition 5.12]{m-m83}, it suffices to show that $W_1$ is not del Pezzo 3-fold $V_d$ of degree $1 \leq d \leq 4$. Suppose that $W_1 = V_d$ for some $1 \leq d \leq 4$.

By (\ref{eq:H}), the surfaces $S_1$ and $S_2$ are hypersurfaces of $V_d$. By Proposition \ref{prop:2.10} and Lemma \ref{lem:deg}, we have
\begin{equation}\label{eq:dP}
1+d = 1 + (S_1^2 \cdot S_2) \geq \eu(S_1) + \eu(S_2) +B_3(V_d) -4.
\end{equation}
Together with Theorem \ref{thm:dP}, we have $3+d \geq B_3(V_d)$. Since 
\[
B_3(V_d)=
\left\{
\begin{array}{ll}
42&\mathrm{if}\:d=1\\
20&\mathrm{if}\:d=2\\
10&\mathrm{if}\:d=3\\
4 &\mathrm{if}\:d=4
\end{array}
\right.
\]
by \cite[Table 3.5]{isk80},
we have $d=4$. By Theorem \ref{thm:dP} (4) and Lemma \ref{lem:cone}, we have $\eu(S_i) \geq 3$ for $i=1,2$. Hence (\ref{eq:dP}) is rewritten as $5=1+d \geq 6$, a contradiction. 
\end{proof}

\subsection{On the conditions in Lemma \ref{lem:const1}}\label{sec:condiIII}

Now we can prove the relations stated in Remark \ref{rem:123}. Note that we use only Proposition \ref{prop:5.3} among propositions in this subsection to prove Theorem \ref{thm:main} (1).

\begin{prop}\label{prop:condiII}
The condition \textup{(I\hspace{-.1em}I)} in Lemma \ref{lem:const1} holds for $(S_1, S_2, C)$ if and only if $B_2(S_1)$ is smallest possible among hyperplane sections of $W_1$.
\end{prop}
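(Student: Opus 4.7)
The plan is to reduce to a case-by-case verification using Proposition \ref{prop:W_1}, which restricts $W_1$ to one of $\P^3$, $\Q^3$, or $V_5$. Since $S_1$ is a hyperplane section of a Fano variety, it is ample, so $W_1 \setminus S_1$ is automatically affine; thus condition (I\hspace{-.1em}I) of Lemma \ref{lem:const1} collapses to the contractibility of $W_1 \setminus S_1$. For each possibility for $W_1$, I would enumerate the hyperplane sections up to isomorphism, read off $B_2$ in each case, and match the sections achieving the minimal $B_2$ with those whose complement is contractible, invoking the classification of smooth projective compactifications of $\A^3$ with $B_2=1$ due to Furushima and collaborators.

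First the trivial case $W_1 = \P^3$: every hyperplane section is $\P^2$ with $B_2=1$ (trivially the minimum), and $\P^3 \setminus \P^2 \cong \A^3$ is contractible, so both sides of the equivalence hold universally. Next for $W_1 = \Q^3$, the hyperplane sections are quadric surfaces in $\P^3$; the smooth ones give $\P^1 \times \P^1$ with $B_2 = 2$, while the singular ones give quadric cones with $B_2 = 1$. Furushima \cite{fur93} showed that $\Q^3 \setminus S_1 \cong \A^3$ precisely when $S_1 \cong \Q^2_0$, which is exactly when $B_2(S_1)=1$. Conversely, as noted in Example \ref{ex:counterex}, $\Q^3 \setminus (\P^1 \times \P^1) \cong \mathrm{SL}(2;\C)$ is homotopy equivalent to $S^3$ and so is not contractible.

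The substantive case is $W_1 = V_5$, for which I would invoke the classification of hyperplane sections in \cite{fur93, fur00}. Its hyperplane sections range from smooth quintic del Pezzo surfaces (with $B_2 = 5$) down through various Du Val and non-normal degenerations with smaller $B_2$. The minimum is realized by the non-normal surfaces $H_5^0$ and $H_5^\infty$ used in Propositions \ref{prop:const4} and \ref{prop:const4.5}, and for these the complement $V_5 \setminus S_1$ is isomorphic to $\A^3$ by \cite[Lemma 12]{fur00}. Every other hyperplane section has strictly larger $B_2$ and its complement fails to be contractible.

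The main obstacle is the $V_5$ case, which requires carefully traversing Furushima's list to verify that minimality of $B_2$ coincides with contractibility of the complement for every class of hyperplane section; for $\P^3$ and $\Q^3$ the equivalence drops out of standard facts.
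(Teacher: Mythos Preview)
Your approach is essentially the same as the paper's: reduce to $W_1\in\{\P^3,\Q^3,V_5\}$ via Proposition~\ref{prop:W_1} and then appeal to the known classification of compactifications. The paper collapses your three case discussions into a single citation of \cite[Corollary~2.1]{ki05}, which already records, for each Fano 3-fold with $B_2=1$, exactly which hyperplane sections have contractible affine complement; this is slightly sharper than invoking Furushima's $\A^3$-classification alone, since condition~(I\hspace{-.1em}I) asks only for contractibility, and you must still rule out contractible complements that are not $\A^3$ in the $V_5$ case (you do this explicitly for $\Q^3$ via $\mathrm{SL}(2;\C)$, but for $V_5$ it is an assertion). One small factual slip: for $V_5$ the minimal $B_2$ is achieved not only by non-normal sections but also by the normal quintic del~Pezzo surface with an $A_4$-singularity (cf.\ the proof of Proposition~\ref{prop:No14}); both types have complement $\cong\A^3$.
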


\begin{proof}
By (\ref{eq:H}) and Proposition \ref{prop:W_1}, the surface $S_1$ is a hyperplane section of $W_1 \cong \P^3$, $\Q^3$ or $V_5$. By the classification of compactifications of contractible affine 3-folds into Fano 3-folds with $B_2=1$ (see \cite[Corollary 2.1]{ki05}), we have the assertion.
\end{proof}

\begin{prop}\label{prop:5.2} Suppose that the condition \textup{(I\hspace{-.1em}I)} in Lemma \ref{lem:const1} holds for $(S_1, S_2)$. We also assume that $S_2$ is normal and rational. Then $S_2 \setminus F$ is smooth.
\end{prop}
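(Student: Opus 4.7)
The plan is to apply Lemma~\ref{lem:2.18} with $X := W_1 \setminus S_1$, $S := S_2 \setminus F$, and $r := C \setminus (C \cap S_1)$, and then to combine the resulting homological identities with the rationality of $S_2$ to conclude that $\Sing S_2 \subset S_1$ (which is equivalent to the smoothness of $S_2 \setminus F$).

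First I would verify the hypotheses. By Proposition~\ref{prop:W_1}, condition (I\hspace{-.1em}I), and the classification of compactifications of contractible affine $3$-folds into Fano $3$-folds with $B_2 = 1$ (cf.\ \cite[Corollary 2.1]{ki05}), $X$ is a smooth contractible affine $3$-fold. The surface $S \subset X$ is closed, irreducible, and normal, and $r \subset S$ is closed, irreducible, and smooth. The essential hypothesis that $N := \Bl_r X \setminus \varphi^{-1}_*(S)$ is contractible follows from $(\dagger)$: since $C \not\subset S_1$, the exceptional fibers of $\varphi_1$ over the points of $C \cap S_1$ already lie in the strict transform $D_1$, hence $\Bl_r X = V \setminus D_1$ set-theoretically, and so $N = V \setminus (D_1 \cup D_2) = U$ is contractible.

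Lemma~\ref{lem:2.18} then supplies in particular
\[
H_2(S_2^{\sm} \setminus F, \Z) = 0 \quad \text{and} \quad H_3(S_2^{\sm} \setminus F, \Z) = \Z^{\sharp(\Sing S_2 \setminus S_1)},
\]
so it suffices to show that the latter vanishes. Since $S_2$ is a hyperplane section of the smooth Fano $W_1$, it is Gorenstein, and together with $-K_{S_2} = H|_{S_2}$ being ample and the normality and rationality hypotheses, $S_2$ is a Gorenstein del Pezzo surface whose singularities are Du Val. I would then consider the minimal resolution $\tau \colon \tilde S_2 \to S_2$, a smooth projective rational (weak del Pezzo) surface with exceptional divisor $E = \bigsqcup_{p \in \Sing S_2} E_p$ a disjoint union of trees of $(-2)$-curves, and realize $S_2^{\sm} \setminus F = \tilde S_2 \setminus D$ for $D := \tau^{-1}(F) \cup E$.

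The final step is to extract the required cancellation from the long exact sequence in compactly supported cohomology for the pair $(\tilde S_2, D)$. Using $H^1(\tilde S_2, \Z) = H^3(\tilde S_2, \Z) = 0$, the vanishing $H_2(S_2^{\sm} \setminus F, \Z) = H^2_c(\tilde S_2 \setminus D, \Z) = 0$ translates, via Poincar\'e--Lefschetz duality on $\tilde S_2$, into the two constraints $H^1(D, \Z) = 0$ (which forces $D$ to be a forest of rational curves) and the injection $H^2(\tilde S_2, \Z) \hookrightarrow H^2(D, \Z)$ whose cokernel reproduces the $H_1(S_2^{\sm} \setminus F, \Z) = H_1(C \setminus (C \cap (S_1 \cup \Sing S_2)), \Z)$ prescribed by Lemma~\ref{lem:2.18}. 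The main obstacle is to combine these numerical constraints with the classification of Gorenstein del Pezzo surfaces of degree $d = \deg S_2 \in \{3, 4, 5\}$ with Du Val singularities and with the structure of $F = (S_1|_{S_2})_{\mathrm{red}}$ as the support of an ample divisor of degree $d$, in order to rule out the presence of an isolated exceptional tree $E_p$ inside $D$ disjoint from $\tau^{-1}(F)$ (which would correspond to a singular point $p \in \Sing S_2 \setminus S_1$), thereby forcing $\Sing S_2 \subset S_1$.
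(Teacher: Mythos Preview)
Your setup and the verification that Lemma~\ref{lem:2.18} applies are correct and match the paper. The divergence comes in how you use the output. You focus on $H_3(S_2^{\sm}\setminus F,\Z)=\Z^{n}$ with $n=\sharp(\Sing S_2\setminus F)$ and try to force $n=0$ by comparing with the long exact sequence for the pair $(\wt{S}_2,D)$. But this cannot produce a contradiction on its own: the same long exact sequence gives $H_3(\wt{S}_2\setminus D,\Z)\cong H^1_c(\wt{S}_2\setminus D,\Z)\cong\Z^{b_0(D)-1}$, and since $\tau^{-1}(F)$ is connected and each $E_p$ with $p\notin F$ is an isolated tree, one has $b_0(D)-1=n$ tautologically. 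So your two computations of $H_3$ agree for free, and the ``main obstacle'' you identify---ruling out an isolated tree via classification---is not just an obstacle but the entire content of the proposition, and you have not supplied an argument for it.

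The paper avoids this global bookkeeping by working locally. It applies Lemma~\ref{lem:2.18} as you do, but then uses Mayer--Vietoris on $S=S_2\setminus F$ with small balls $U_i$ around each singular point $p_i$. The key step is that both $H_2(S,\Z)$ and $H_1(r\setminus(r\cap\Sing S),\Z)$ are free $\Z$-modules, because $S$ and $r\setminus(r\cap\Sing S)$ are Stein (this is \cite{ham83}). Sandwiched between these, the abelianization $H_1(U_i\setminus\{p_i\},\Z)$ of the local fundamental group $\pi_{S,p_i}$ must also be free. But since $(S,p_i)$ is a rational double point, $\pi_{S,p_i}$ is finite (Theorem~\ref{thm:2.20}), so its abelianization is finite and free, hence trivial. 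Thus $\pi_{S,p_i}$ is perfect, and by Theorem~\ref{thm:2.21} each $(S,p_i)$ is an $E_8$-singularity. Now the Euler characteristic bound $9\geq\eu(\wt{S}_2)\geq\eu(S_2)+8n\geq 3+8n$ forces $n=0$. This local-fundamental-group-plus-$E_8$ step is the missing idea in your proposal.
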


\begin{proof} 
By Proposition \ref{prop:W_1}, we have $W_1 \cong \P^3, \Q^3$ or $V_5$. Hence $S_2$ is a Gorenstein del Pezzo surface with $K_{S_2}^2 \geq 3$ by (\ref{eq:H}).
Let $S\coloneqq S_2 \setminus F$ and $\Sing\,S\coloneqq \{p_1, \dots, p_n\}$ $(n \geq 0)$. For all $i$, take an open ball $U_i'\coloneqq \{x \in \C^3 \:|\: d(x, p_i) < \epsilon \}$ with a sufficiently small $\epsilon>0$ such that $U_i' \cap U_j' =\emptyset $ for any $i \neq j$. Let $U_i \coloneqq U_i' \cap S$. By the Mayer-Vietoris exact sequence, we have the following exact sequence of homologies:
\[
\xymatrix@=10pt{
&{\phantom{aaaaaaaaaaaaaaa}\cdots }\ar[r] 
&{H_{2}(S \setminus \Sing\,S, \Z)} \oplus {\bigoplus_{i=1}^n}{H_{2}(U_i, \Z)} \ar [r]
&H_{2}(S, \Z) 
\\
\ar [r] 
&{\bigoplus_{i=1}^{n}} {H_{1}(U_i \setminus \{p_i\}, \Z)} 
\ar [r]
&{H_{1}(S \setminus \Sing\,S, \Z)} \oplus {\bigoplus_{i=1}^n}{H_{1}(U_i, \Z)} \ar [r]
& {\cdots.\phantom{aaaa}} 
}
\]
Since the singularity $(S, p_i)$ is a rational double point by \cite[Proposition 1.2]{h-w81}, $U_i$ is contractible for any $i$ by Theorem \ref{thm:2.22}. Applying Lemma \ref{lem:2.18} by setting $X= W \setminus S_1, S$ as above and $r=C \setminus (C \cap S_1)$, we have $H_1(S \setminus \Sing\,S, \Z)=H_1(r \setminus (r \cap \Sing\,S), \Z)$ and $H_2(S \setminus \Sing\,S, \Z)=0$. Hence the exact sequence is rewritten as
\[
\xymatrix@=10pt{
\cdots \ar [r]
&0 \ar [r]
&H_{2}(S, \Z) \ar [r] 
&\displaystyle{\oplus_{i=1}^{n}} {H_{1}(U_i \setminus \{p_i\}, \Z)} \ar [r]
&H_1(r \setminus (r \cap \Sing\,S), \Z) \ar [r]
& {\cdots}. 
}
\]
Since $S$ and $r \setminus (r \cap \Sing\,S)$ are affine, both $H_2(S, \Z)$ and $H_1(r \setminus (r \cap \Sing\,S), \Z)$ are free $\Z$-modules by \cite[Korollar]{ham83}. Therefore $H_{1}(U_i \setminus \{p_i\}, \Z)$ is also a free module for any $i$. 

Fix $i \in \{1, \dots, n\}$. By the Hurewicz theorem, $H_{1}(U_i \setminus \{p_i\}, \Z)$ is the abelianization of the local fundamental group $\pi_{S, p_i}$ of $(S, p_i)$. By Theorem \ref{thm:2.20}, $\pi_{S, p_i}$ is finite and so is $H_{1}(U_i \setminus \{p_i\}, \Z)$. Hence $H_{1}(U_i \setminus \{p_i\}, \Z)=0$ and $\pi_{S, p_i}$ is perfect. Hence $(S, p_i)$ is the $E_8$ singularity by Theorem \ref{thm:2.21}. By Theorem \ref{thm:dP} (1), it follows that 
\[
\begin{aligned}
9&\geq (9-K_{S_2}^2)+\eu(\P^2)=\eu(\wt{S}_2) \\
 &\geq \eu(S_2)+8n\\
 &\geq 3+8n.
\end{aligned}
\]
Hence $n=0$ and we have the assertion.
\end{proof}

The next proposition is used in our proof of Propositions \ref{prop:No23} and \ref{prop:No14}. In the forthcoming article \cite{n2}, this proposition is also used to investigate the boundary divisors $D_1$ and $D_2$ of triplets $(V, D_1 \cup D_2, U)$.

\begin{prop}\label{prop:5.3} 
Suppose that the assumptions in Proposition \ref{prop:5.2} hold. Let $\pi\colon \wt{S}_2 \rightarrow S_2$ be the minimal resolution. Then $B_0(F)=1, B_1(F)=0$ and $B_2(F)=B_2(S_2) + \sharp(C \cap S_1)+2p_a(C) -1$.
\end{prop}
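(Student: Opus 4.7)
My plan is to compute $H_*(S_2 \setminus F, \Z)$ via Lemma \ref{lem:2.18}, and then to extract the Betti numbers of $F$ from the Borel--Moore long exact sequence associated with the closed embedding $F \hookrightarrow S_2$. The necessary preparations are already in hand: condition \textup{(I\hspace{-.1em}I)} of Lemma \ref{lem:const1} (available by Proposition \ref{prop:condiII}) makes $W_1 \setminus S_1$ contractible; Proposition \ref{prop:5.2} makes $S_2 \setminus F$ smooth; and, since $S_2$ is a normal rational Gorenstein del Pezzo surface, its singularities are rational double points.

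I would apply Lemma \ref{lem:2.18} with $X = W_1 \setminus S_1$, $S = S_2 \setminus F$, and $r = C \setminus (C \cap S_1)$, after noting that the space $N$ in that lemma is identified with the contractible $U = V \setminus (D_1 \cup D_2)$. Since $\mathrm{Sing}\,S = \emptyset$ by Proposition \ref{prop:5.2}, the conclusion reads
\[
H_0(S_2 \setminus F, \Z) = \Z, \quad H_1(S_2 \setminus F, \Z) \cong H_1(C \setminus (C \cap S_1), \Z), \quad H_i(S_2 \setminus F, \Z) = 0 \ (i \geq 2),
\]
and the middle group has rank $2p_a(C) + N_1 - 1$ with $N_1 := \sharp(C \cap S_1) \geq 1$ (Proposition \ref{prop:2.10}(2)). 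Poincar\'e duality on the smooth real $4$-manifold $S_2 \setminus F$ then gives $H_4^{BM}(S_2 \setminus F) = \Z$, $H_3^{BM}(S_2 \setminus F) \cong \Z^{2p_a(C) + N_1 - 1}$, and $H_i^{BM}(S_2 \setminus F) = 0$ for $i \leq 2$. Feeding these values into the Borel--Moore long exact sequence
\[
\cdots \to H_{i+1}^{BM}(S_2 \setminus F) \to H_i(F) \to H_i(S_2) \to H_i^{BM}(S_2 \setminus F) \to \cdots,
\]
together with $H_3(F) = 0$ (since $F$ is one-dimensional), immediately yields $H_0(F) = \Z$, $H_1(F) \cong H_1(S_2)$, and a short exact sequence $0 \to \Z^{2p_a(C) + N_1 - 1} \to H_2(F) \to H_2(S_2) \to 0$.

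The delicate step is establishing $B_1(S_2) = B_3(S_2) = 0$. For this I would use the minimal resolution $\pi\colon \wt{S}_2 \to S_2$: the exceptional fibers are trees of rational curves, so $R^1 \pi_* \Z = 0$, and the Leray spectral sequence together with $H^1(\wt{S}_2, \Z) = 0$ (smooth rational surface) forces $H^1(S_2, \Z) = 0$. Because rational double points are quotient singularities, $S_2$ is a $\Q$-homology manifold, and rational Poincar\'e duality transfers this vanishing to $H^3(S_2, \Q) = 0$. Substituting $B_1(S_2) = B_3(S_2) = 0$ into the sequences above completes the proof, giving $B_0(F) = 1$, $B_1(F) = 0$, and $B_2(F) = B_2(S_2) + 2p_a(C) + \sharp(C \cap S_1) - 1$.
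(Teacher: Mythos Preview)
Your argument is correct and, after the shared first step of computing $H_*(S_2\setminus F,\Z)$ via Lemma~\ref{lem:2.18}, it diverges from the paper's route. The paper lifts everything to the smooth resolution: it identifies $S_2\setminus F$ with $\wt{S}_2\setminus(\pi^{-1}_*(F)\cup E_\pi)$, applies Lefschetz duality to the pair $(\wt{S}_2,\pi^{-1}_*(F)\cup E_\pi)$ to obtain the Betti numbers of $\pi^{-1}_*(F)\cup E_\pi$, and then descends to $F$ using the modification exact sequence of Lemma~\ref{lem:2.17}. You instead stay on $S_2$, invoking Poincar\'e duality on the smooth open surface $S_2\setminus F$ to pass to Borel--Moore homology and reading off $B_*(F)$ directly from the long exact sequence of the closed pair $(S_2,F)$. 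Your path is shorter and bypasses Lemma~\ref{lem:2.17} entirely; the price is that you must justify $B_3(S_2)=0$ for a singular surface, which you do via the $\Q$-homology-manifold structure coming from quotient singularities. The paper, by contrast, only ever does duality on the smooth $\wt{S}_2$, and at the very end simply asserts $H^1(S_2,\Z)=H^3(S_2,\Z)=0$ ``since $S_2$ is rational''.

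One small expository wrinkle: you announce the short exact sequence $0\to\Z^{2p_a(C)+N_1-1}\to H_2(F)\to H_2(S_2)\to 0$ before establishing $B_3(S_2)=0$; what the long exact sequence actually gives at that stage is the four-term sequence $0\to H_3(S_2)\to\Z^{2p_a(C)+N_1-1}\to H_2(F)\to H_2(S_2)\to 0$. Since you then prove $B_3(S_2)=0$ and your final substitution step acknowledges this, the conclusion is unaffected, but it would read more cleanly to delay the short-exact-sequence claim until after that vanishing is in hand.
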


\begin{proof}
Since $F$ is the support of a member of $\left|-K_{S_2}\right|$ and $-K_{S_2}$ is ample, we have $B_0(F)=1$.
Applying Lemma \ref{lem:2.18} by setting $X= \P^3 \setminus S_1, S= S_2 \setminus F$ and $r=C \setminus (C \cap S_1)$, we have the following by Proposition \ref{prop:5.2}:
\[
H_i(\wt{S}_2 \setminus (\pi^{-1}_*(F) \cup E_{\pi}), \Z)=H_i(S_2 \setminus F, \Z)=
\left\{
\begin{array}{ll}
\Z &i=0\\
\Z^{\sharp(C \cap S_1)+2p_a(C)-1}&i=1\\
0&i \geq 2.
\end{array}
\right.
\]

Now we consider the following exact sequence of cohomologies:
\[
\xymatrix@=10pt{
&{\phantom{aaaaaaaaaaaaa} \cdots} \ar [r]&H^{1}(\wt{S}_2, \Z) \ar [r]
&H^{1}(\pi^{-1}_*(F) \cup E_{\pi}, \Z)\\
\ar[r]
&H^{2}(\wt{S}_2, \pi^{-1}_*(F) \cup E_{\pi}, \Z) \ar [r]
&H^{2}(\wt{S}_2, \Z) \ar [r]
&H^{2}(\pi^{-1}_*(F) \cup E_{\pi}, \Z)\\
\ar[r]
&H^{3}(\wt{S}_2, \pi^{-1}_*(F) \cup E_{\pi}, \Z) \ar [r]
&H^{3}(\wt{S}_2, \Z) \ar[r]
&{\cdots. \phantom{aaaaaaaaa}}
}
\]

By the Lefschetz duality, we have $H^{i}(\wt{S}_2, \pi^{-1}_*(F) \cup E_{\pi}, \Z) \cong H_{4-i}(\wt{S}_2 \setminus (\pi^{-1}_*(F) \cup E_{\pi}), \Z)$. We also note that $H^{1}(\wt{S}_2, \Z)=H^{3}(\wt{S}_2, \Z)=0$ since $S_2$ is rational. Hence $B_1(\pi^{-1}_*(F) \cup E_{\pi}, \Z) = 0$ and $B_2(\pi^{-1}_*(F) \cup E_{\pi}) = B_2(\wt{S}_2) + \sharp(C \cap S_1)+2p_a(C) -1$. Moreover, we have the following exact sequence of cohomologies by Lemma \ref{lem:2.17}:
\[
\xymatrix@=10pt{
\cdots \ar [r]
&{H^{1}(S_2, \Z)} \ar [r]
&H^{1}({F}, \Z) \oplus H^{1}(\wt{S}_2, \Z) \ar[r] 
&H^{1}(\pi^{-1}_*(F) \cup E_{\pi}, \Z)
\\
{\phantom{aaa}}\ar [r]
&{H^{2}(S_2, \Z)}\ar [r]
&H^{2}({F}, \Z) \oplus H^{2}(\wt{S}_2, \Z) \ar[r] 
&{H^{2}(\pi^{-1}_*(F) \cup E_{\pi}, \Z)}
\\
{\phantom{aaa}}\ar[r]
&H^{3}(S_2, \Z).
&
&
}
\]
We note that $H^{1}({S}_2, \Z)=H^{3}({S}_2, \Z)=0$ since $S_2$ is rational.
Hence $B_1(F)=0$ and 
\[
B_2(F)=B_2(S_2)+B_2(\pi^{-1}_*(F) \cup E_{\pi}) - B_2(\wt{S}_2)=B_2(S_2) + \sharp(C \cup F)+2p_a(C) -1.
\]
\end{proof}

\subsection{The case $W_1=\P^3$}\label{sec:P3}
In this subsection, we assume that $W_1=\P^3$. By (\ref{eq:H}), the surface $S_1$ (resp.\,$\ S_2$) is a hyperplane which does not contain $C$ (resp.\,a cubic hypersurface which contains $C$ such that $C \not\subset \Sing\,S_2 $).

\begin{prop}\label{prop:4.2}
It holds that $p_a(C) \leq 1$.
\end{prop}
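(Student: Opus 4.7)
The plan is to get a uniform bound on $\eu(S_2)+2p_a(C)$ from the topological input and then run through the Alexeev--Furushima trichotomy for the Gorenstein del Pezzo surface $S_2$.

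First, I would combine the tools already developed in Section~3. By (\ref{eq:H}) we have $S_1\cong\P^2$ and $S_2$ a cubic surface in $\P^3$, so $\eu(S_1)=3$ and $(S_1^2\cdot S_2)=3$; also $B_3(\P^3)=0$. Plugging these into Proposition~\ref{prop:2.10}(1) gives
\[
\eu(F)=\eu(S_2)+2p_a(C)+(N_1+N_2-N_{1\cap 2})-2.
\]
Combining this with Lemma~\ref{lem:deg} (which yields $\eu(F)\le 4$) and the inequality $N_1+N_2-N_{1\cap 2}\ge 1$ from Proposition~\ref{prop:2.10}(2), I obtain the key estimate
\[
\eu(S_2)+2p_a(C)\le 5.
\]

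Next I would split according to Theorem~\ref{thm:dP}. If $S_2$ is not a cone, part~(4) gives $\eu(S_2)\ge 3$, so $p_a(C)\le 1$. If $S_2$ is a non-normal cubic cone, part~(3) gives $\eu(S_2)\ge 2$, whence $2p_a(C)\le 3$ and again $p_a(C)\le 1$. The only remaining possibility is that $S_2$ is a normal cone, which, since its base curve must be smooth (otherwise the singular ruling would make $S_2$ non-normal), is the cone over a smooth plane cubic. In that case $S_2$ is irrational with $\eu(S_2)=1$, and the estimate only yields $p_a(C)\le 2$, so extra work is required.

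The main obstacle is therefore this cone-over-elliptic case, and I would handle it by explicit adjunction on the minimal resolution. Let $e$ be the base elliptic curve with vertex $v\in S_2$, and let $\pi\colon\wt{S}_2\to S_2$ be the blow-up at $v$, so $\wt{S}_2\cong\P_e(\mathcal{O}\oplus\mathcal{O}(3))$ is a $\P^1$-bundle with $E_\pi^2=-3$ and $K_{\wt{S}_2}\sim-2E_\pi-3f$. Writing $\wt{C}\sim aE_\pi+bf$ numerically, smoothness of $C$ at $v$ forces $\wt{C}\cdot E_\pi\in\{0,1\}$, giving $b=3a$ (when $v\notin C$) or $b=3a+1$ (when $v\in C$). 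A direct computation using $2p_a(\wt{C})-2=\wt{C}\cdot(\wt{C}+K_{\wt{S}_2})$ yields
\[
p_a(C)=1+\tfrac{3a(a-1)}{2}\quad\text{or}\quad p_a(C)=1+\tfrac{(3a+2)(a-1)}{2},
\]
respectively. In each expression, $a\ge 2$ forces $p_a(C)\ge 4$, which contradicts the bound $p_a(C)\le 2$ obtained above. Hence $a\le 1$, and a glance at the two formulas gives $p_a(C)\le 1$, completing the proof.
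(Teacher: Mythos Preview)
Your proof is correct and follows essentially the same line as the paper's: both combine Proposition~\ref{prop:2.10}, Lemma~\ref{lem:deg} and Theorem~\ref{thm:dP} to reduce to the case where $S_2$ is the cone over a smooth plane cubic, and then run a genus computation on the minimal resolution $\wt{S}_2$.

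The only substantive difference is in the endgame on $\wt{S}_2$. The paper solves $(a-1)(2b-3a)=2$ directly, finds $(a,b)\in\{(2,4),(3,5)\}$, and observes that $\wt{C}\cdot C_0<0$ in both cases, forcing $\wt{C}$ to be reducible. You instead impose the constraint $\wt{C}\cdot E_\pi\in\{0,1\}$ from smoothness of $C$ (which is valid: the resolution of the cubic cone is the strict transform in $\Bl_v\P^3$, and there $\wt{C}\cdot E=1$ since $C$ is smooth at $v$), turning the genus formula into an explicit function of $a$ that jumps from $\le 1$ to $\ge 4$. Both arguments are short and use the same data; the paper's version avoids having to justify the multiplicity bound, while yours avoids the irreducibility check.
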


\begin{proof}
As $B_3(V)=2p_a(C)$, it holds that 
\begin{equation}\label{eq:P3}
4 \geq \eu(F) = \eu(S_2) +2p_a(C)+ N_1 +N_2 - N_{1 \cap 2}-2 \geq 2p_a(C)
\end{equation}
by Lemma \ref{lem:deg}, Proposition \ref{prop:2.10} and Theorem \ref{thm:dP}. Hence $2 \geq p_a(C)$.

 Suppose that $p_a(C)=2$. Since all equalities of (\ref{eq:P3}) hold, we have $\eu(S_2)=1$ and hence $S_2$ is a cone over an elliptic curve by Theorem \ref{thm:dP}. 

By \cite{h-w81}, its minimal resolution $\wt{S}_2$ is a geometrically ruled surface over the elliptic curve, which corresponds to a vector bundle of degree 3. 
We write $\tau^{-1}_*(C) \equiv a C_0 +b f$, where $C_0$ (resp.\,$f$) is the minimal section (resp.\,a ruling) of $\wt{S}_2$. Then we have 
\[
\begin{aligned}
2=2p_a(\tau^{-1}_*(C))-2&=(\tau^{-1}_*(C) \cdot \tau^{-1}_*(C)+K_{\wt{S}_2})\\
 &=(aC_0+bf \cdot (a-2)C_0+(b-3)f)\\
 &=(2b-3a)(a-1)
\end{aligned}
\]
by the genus formula. Hence $(a, b)=(2,4)$ or $(3, 5)$, which implies that $\tau^{-1}_*(C)$ is reducible. This contradicts the irreducibility of $C$. 
\end{proof}

By \cite[Table 2]{m-m81} and Proposition \ref{prop:4.2}, $C$ is either a smooth rational curve of degree $1 \leq d \leq 4$ or an elliptic curve of degree $3 \leq d \leq 5$.
The following completes the proof of Theorem~\ref{thm:main}~(1) in this case.

\begin{prop}\label{prop:No17}
The center of the blow-up $C$ cannot be an elliptic curve of degree $5$.
\end{prop}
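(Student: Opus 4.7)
The plan is to assume for contradiction that $C$ is a smooth elliptic quintic ($p_a(C)=1$, $\deg C=5$) and derive a contradiction from Lemma~\ref{lem:euler} via its repackaging Proposition~\ref{prop:2.10}(1). Substituting $W_1=\P^3$, $B_3(\P^3)=0$, $\eu(S_1)=3$, $p_a(C)=1$, and $(S_1^2\cdot S_2)=3$ into Proposition~\ref{prop:2.10}(1) and Lemma~\ref{lem:deg}, and combining with Proposition~\ref{prop:2.10}(2), yields the master inequalities
\[
\eu(F)\;=\;\eu(S_2)+N_1+N_2-N_{1\cap 2}\;\leq\;4,\qquad N_1+N_2-N_{1\cap 2}\;\geq\;1,
\]
so $\eu(S_2)\leq 3$ and $S_2$ falls into one of the three classes described in Theorem~\ref{thm:dP}.

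In the \emph{normal rational} case Theorem~\ref{thm:dP}(1) forces $\eu(S_2)=3$ and $B_2(S_2)=1$. Since $S_1\cong\P^2$ realises the smallest $B_2$ among hyperplane sections of $\P^3$, Proposition~\ref{prop:condiII} gives condition~(I\hspace{-.1em}I) of Lemma~\ref{lem:const1}, so Proposition~\ref{prop:5.3} applies: $B_2(F)=B_2(S_2)+N_1+1$ and hence $\eu(F)=N_1+3$. Together with $\eu(F)\leq 4$ this pins $N_1=1$, so $S_1$ is $5$-osculating to $C$ at a single point. I would then rule this out using the classification of normal Gorenstein cubic surfaces with $B_2=1$, namely those with singularity type $E_6$, $A_5+A_1$ or $3A_2$: the curve $C$ represents a non-Cartier Weil class with $C\cdot H=5$ on a surface with $H^2=3$, and on each minimal resolution $\wt S_2$ one uses $\wt C\cdot(-K_{\wt S_2})=5$, $\wt C^{\,2}=5$, and $p_a(\wt C)=1$ together with the $(-2)$-configuration to check that no smooth such class exists. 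In the \emph{normal irrational} case, Theorem~\ref{thm:dP}(2) and \cite{h-w81} identify $S_2$ as the cone over a plane cubic elliptic curve; mimicking the computation in the proof of Proposition~\ref{prop:4.2} forces $\tau^{-1}_*(C)\equiv C_0+5f$ on the minimal resolution, so $\wt C\cdot C_0=2$, and $C$ must pass through the vertex $v$. I would split on whether $v\in S_1$ (where $F$ is three rulings through $v$ so $\eu(F)=4$, forcing $N_1=3$) or $v\notin S_1$ (where $F$ is a plane cubic section off the vertex) and in each subcase verify that the master equality either fails numerically or is incompatible with $C$ being smooth of degree~$5$. In the \emph{non-normal} case, the Abe--Furushima classification \cite{a-f83} gives explicit descriptions: for class (C) with $\ol E_{S_2}$ reducible, Lemma~\ref{lem:line} sharpens $B_2(F)\leq 2$ and so $\eu(F)\leq 3$, making the master inequality unachievable; the remaining sub-classes are excluded by inspecting the normalisation $\ol S_2$ and ruling out smooth elliptic quintics on it.

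The main obstacle is the normal rational subcase. Although the Euler bounds pin $B_2(S_2)=1$ and $N_1=1$ very sharply, eliminating these configurations requires a genuinely case-by-case Picard-lattice computation on the minimal resolution of each of the three admissible singularity types, carefully tracking the $(-2)$-chains and the numerical Weil class of $C$ (of "fractional degree" $5/3$ relative to $H$). The normal irrational and non-normal subcases reduce to finite checklists which are dispatched directly by the $\eu(F)$ computation combined, in the non-normal setting, with Lemma~\ref{lem:line}.
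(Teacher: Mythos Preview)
Your proposal has a genuine gap in the normal rational subcase, and it misses the structural simplification that makes the paper's argument short.

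\textbf{The gap.} In the normal rational case you correctly pin down $\eu(S_2)=3$, $B_2(S_2)=1$, $N_1=1$, but then you only \emph{announce} a case-by-case Picard-lattice computation on the resolutions of the $E_6$, $A_5{+}A_1$, $3A_2$ cubics to rule out a smooth elliptic quintic Weil class. That computation is never carried out; as written this subcase is not proved. Likewise, in the non-normal case you treat only class~(C) with reducible conductor and defer ``the remaining sub-classes'' to unspecified inspection. Your normal irrational case splits on whether the vertex lies on $S_1$, which introduces unnecessary work.

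\textbf{What the paper does instead.} The paper splits by \emph{cone versus non-cone}, using Theorem~\ref{thm:dP}(4). If $S_2$ is not a cone then $\eu(S_2)\geq 3$, and the master chain $4\geq \eu(F)\geq \eu(S_2)+1\geq 4$ forces \emph{all} equalities: $\eu(F)=4$ and $B_2(F)=3$. Thus $F\subset S_1\cong\P^2$ is a union of three concurrent lines. These three lines lie on the cubic $S_2$; in the normal case, the tables in \cite{qi02} show that on a normal cubic with $B_2(S_2)=1$ no three $(-1)$-curves are concurrent, while in the non-normal (non-cone) case $\eu(S_2)=3$ forces class~(C) with reducible conductor and Lemma~\ref{lem:line} gives $B_2(F)<3$, contradicting $B_2(F)=3$. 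The point is to analyse $F$ on $S_2$, not $C$ on $S_2$: this replaces your proposed lattice computation by a one-line appeal to the geometry of lines on singular cubics. In the cone case, one works on the minimal resolution $\wt S_2$ and computes $\tau^{-1}_*C\equiv C_0+5f$, whence $(\tau^{-1}_*C\cdot C_0)=2$; this forces $C$ to be singular at the vertex, independently of where $S_1$ sits.

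Your detour through Proposition~\ref{prop:5.3} in the normal rational case is not wrong, but it is unnecessary: the concurrency of the three lines of $F$ is already forced by $\eu(F)=4$ and $B_2(F)=3$, and that is all you need.
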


\begin{proof}
Suppose that $C$ is an elliptic curve of degree 5. By Lemma \ref{lem:deg}, Proposition \ref{prop:2.10} and Theorem \ref{thm:dP}, we have 
\begin{equation}\label{eq.No.17}
4 \geq \eu(F) \geq \eu(S_2) + 1 \geq 2.
\end{equation}

Suppose that $S_2$ is not a cone. Then all equalities of (\ref{eq.No.17}) hold by Theorem \ref{thm:dP}. Hence $\eu(S_2)=3$ and $F$ is a sum of three concurrent lines on $S_1 \cong \P^2$.
If $S_2$ is normal, then each sum of three $(-1)$-curves is not concurrent by \cite[Appendix: Configurations of the Singularity types]{qi02}, a contradiction. 
If $S_2$ is non-normal, then $S_2$ belongs to the class (C) of \cite{a-f83} and the conductor locus $\ol{E}_{S_2}$ of the normalization of $S_2$ is reducible, which contradicts Lemma \ref{lem:line}. 

Hence $S_2$ must be a cone. By \cite{h-w81} and \cite{a-f83}, the resolution $\tau\colon \wt{S}_2 \rightarrow S_2$ contracts the minimal section $C_0$ of the geometrically ruled surface $\wt{S}_2$, which corresponds to a vector bundle of degree 3 on a curve. We also note that $\tau^*(-K_{S_2}) \sim C_0 + 3f$, where $f$ is a ruling of $\wt{S}_2$. We can write $\tau^{-1}_*(C) \sim aC_0 +bf$ with $3b \geq a >0$ since $C$ is not a ruling of $S_2$. 
Then we have
\[
5=(C \cdot -K_{S_2})_{S_2}=(\tau^{-1}_*(C) \cdot \tau^*(-K_{S_2}))_{\wt{S}_2} = (a C_0 + bf \cdot C_0 + 3f)_{\wt{S}_2}=b.
\] 
This implies $\tau^{-1}_*(C) \sim C_0+5f$. Hence $C$ must be singular since $(\tau^{-1}_*(C) \cdot C_0)=2$, which contradicts the smoothness of $C$.
\end{proof}

\subsection{The case $W_1=\Q^3$}\label{sec:Q3}
In this subsection, we assume that $W_1=\Q^3$. By (\ref{eq:H}), the surface $S_1$ (resp.\,$S_2$) is a hyperplane section which does not contain $C$ (resp.\,a member of $\left| \mathcal{O}_{\Q^3}(2) \right|$ which contains $C$ such that $C \not\subset \Sing\,S_2$).

\begin{prop}\label{prop:9.2}
It holds that $p_a(C)\leq 1$.
\end{prop}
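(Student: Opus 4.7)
The plan is to mirror the argument used for Proposition \ref{prop:4.2}, squeezing $p_a(C)$ between an upper bound on $\eu(F)$ coming from intersection numbers and a lower bound coming from the topological formula in Proposition \ref{prop:2.10}.

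First I would compute $(S_1^2 \cdot S_2)_{\Q^3}$. Since (\ref{eq:H}) gives $S_1 \sim H$ and $S_2 \sim 2H$ where $H$ is the restriction of $\mathcal{O}_{\P^4}(1)$, and $H^3 = \deg \Q^3 = 2$, we obtain $(S_1^2 \cdot S_2)=4$, so Lemma \ref{lem:deg} yields $\eu(F) \leq 5$.

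Next I would bound $\eu(S_1)$ and $\eu(S_2)$ from below. The surface $S_1$ is a hyperplane section of $\Q^3$, hence isomorphic either to $\P^1 \times \P^1$ (with $\eu = 4$) or to the quadric cone $\Q^2_0$ (with $\eu = 3$), so $\eu(S_1) \geq 3$. The surface $S_2$ is a Gorenstein del Pezzo surface of degree $K_{S_2}^2 = (-H|_{S_2})^2 = (H^2 \cdot 2H)_{\Q^3} = 4$. Because $S_2 \subset \Q^3$ and $\Q^3$ is smooth, Lemma \ref{lem:cone} forbids $S_2$ from being a cone over a curve, so Theorem \ref{thm:dP}(4) gives $\eu(S_2) \geq 3$. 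This is the key point that distinguishes the argument from the $\P^3$ case: here the larger degree of $S_2$ rules out the cone case outright, so no separate analysis (as in the end of the proof of Proposition \ref{prop:4.2}) is needed.

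Finally, since $B_3(\Q^3) = 0$, Proposition \ref{prop:2.10}(1) combined with Proposition \ref{prop:2.10}(2) gives
\[
\eu(F) \;=\; \eu(S_1) + \eu(S_2) + 2p_a(C) + (N_1 + N_2 - N_{1 \cap 2}) - 5 \;\geq\; 3 + 3 + 2p_a(C) + 1 - 5 \;=\; 2 + 2p_a(C).
\]
Combining with $\eu(F) \leq 5$ yields $2p_a(C) \leq 3$, hence $p_a(C) \leq 1$. The main obstacle is really just ensuring the lower bound $\eu(S_2) \geq 3$ holds uniformly, and this is handled cleanly by the degree-4 hypothesis via Lemma \ref{lem:cone}.
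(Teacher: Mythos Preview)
Your proof is correct and follows essentially the same route as the paper's own argument: both bound $\eu(F)$ above by $5$ via Lemma~\ref{lem:deg}, use $\eu(S_1)\geq 3$ from the classification of hyperplane sections of $\Q^3$, use $\eu(S_2)\geq 3$ via Lemma~\ref{lem:cone} and Theorem~\ref{thm:dP}(4), and then invoke Proposition~\ref{prop:2.10} with $B_3(\Q^3)=0$ to obtain $5\geq 2p_a(C)+2$. Your write-up simply unpacks the chain of inequalities that the paper records in a single displayed line.
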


\begin{proof}
Note that $\eu(S_1) \geq 3$ since $S_1$ is either $\P^1 \times \P^1$ or $\Q^2_0$. Since $
B_3(\Q^3)=0$, we obtain the desired inequality 
\[
5 \geq\eu(F) \geq \eu(S_1)+\eu(S_2) +2p_a(C)-4 \geq 2p_a(C)+2
\]
by Theorem \ref{thm:dP}, Lemma \ref{lem:cone}, Proposition \ref{prop:2.10} and Lemma \ref{lem:deg}.
\end{proof}

\begin{lem}\label{lem:F_0}
If $S_1 \cong \P^1 \times \P^1$, then we have $\eu(F) \leq 4$.
\end{lem}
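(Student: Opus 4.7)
The plan is to run Lemma \ref{lem:deg} to near-equality and then rule out the equality case by a direct combinatorial analysis on $\P^1 \times \P^1$.

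First I would compute the intersection number bounding $\eu(F)$. Since $W_1 = \Q^3 \subset \P^4$ has $H^3 = 2$, and by \eqref{eq:H} we have $S_1 \sim H$ and $S_2 \sim 2H$, Lemma \ref{lem:deg} gives
\[
\eu(F) \leq 1 + (S_1^2 \cdot S_2) = 1 + 2 \cdot H^3 = 5.
\]
So the task reduces to excluding the case $\eu(F) = 5$.

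Assume for contradiction that $\eu(F)=5$. As noted in the proof of Lemma \ref{lem:deg}, $F$ is connected (since $S_1|_{S_2}$ is ample on $S_2$), so $B_0(F) = 1$, and $B_2(F) \leq (S_1^2 \cdot S_2) = 4$ (because $F$ is a reduced curve whose number of components is at most $(S_1 \cdot F) \leq (S_1^2 \cdot S_2)$). From $5 = B_0(F) - B_1(F) + B_2(F) \leq 1 + B_2(F) \leq 5$ we force $B_2(F) = 4$ and $B_1(F) = 0$. In particular, $F$ has exactly four irreducible components $r_1,\dots,r_4$, each satisfying $(S_1 \cdot r_i) = 1$.

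Next I would use the concrete geometry of $S_1 \cong \P^1 \times \P^1$. Under the embedding $S_1 \hookrightarrow \Q^3 \subset \P^4$, the hyperplane class restricts to the $(1,1)$-class, so $F$ is the support of the divisor $S_2|_{S_1}$ of bidegree $(2,2)$. Since each $r_i$ is irreducible with $(S_1 \cdot r_i) = 1$ (equivalently $r_i$ has bidegree $(a,b)$ with $a+b = 1$), the only decomposition of $(2,2)$ into four such components is two rulings of each type, say $r_1, r_2 \sim (1,0)$ and $r_3, r_4 \sim (0,1)$. Then $r_1 \cap r_2 = r_3 \cap r_4 = \emptyset$ while each $r_i \cap r_j$ with $i \in \{1,2\}$, $j \in \{3,4\}$ is a single point, giving four distinct nodes. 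Hence $F$ is topologically a cycle of four $\P^1$'s, which has $B_1(F) = 1$, contradicting $B_1(F) = 0$.

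The main step is the combinatorial classification of how a $(2,2)$ divisor on $\P^1 \times \P^1$ can split into four irreducible components; once that is pinned down, the cycle topology and the resulting $B_1(F) \geq 1$ are immediate.
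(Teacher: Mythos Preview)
Your proof is correct and follows essentially the same route as the paper's: bound $\eu(F)\le 5$ via Lemma~\ref{lem:deg}, reduce to the case $B_2(F)=4$, and then observe that on $S_1\cong\P^1\times\P^1$ the support of a $(2,2)$-divisor with four components must consist of two rulings of each family, forming a $4$-cycle with $B_1(F)=1$. The only cosmetic difference is that the paper phrases the reduction as ``suppose $B_2(F)=4$ and show $B_1(F)>0$'' rather than ``suppose $\eu(F)=5$ and derive a contradiction''; the content is identical.
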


\begin{proof}
Suppose that $B_2(F)=4$. By the proof of Lemma \ref{lem:deg}, it suffices to show that $B_1(F)>0$.
Since $F$ is the support of a member of $\left|\mathcal{O}_{\P^1 \times \P^1}(2, 2)\right|$, $F$ consists of four rulings $f_{ij}$ for $1 \leq i, j \leq 2$ with $(f_{kj} \cdot f_{lj})=\delta_{kl}$. Hence $B_1(F)=1$.
\end{proof}

By \cite[Table 2]{m-m81} and Proposition \ref{prop:9.2},
$C$ is either a smooth rational curve of degree $1 \leq d \leq 4$ or an elliptic curve of degree $4 \leq d \leq 5$.
To prove Theorem \ref{thm:main} (1), we have only to show the following:

\begin{prop}
The center of the blow-up $C$ cannot be an elliptic curve of degree 5.
\end{prop}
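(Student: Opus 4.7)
The plan is to derive a contradiction from Proposition \ref{prop:2.10}(1) by comparing upper and lower bounds on $\eu(F)$. Assuming $C$ is a smooth elliptic quintic, $p_a(C) = 1$ and $B_3(\Q^3) = 0$, so the formula reduces to
\[
\eu(F) = \eu(S_1) + \eu(S_2) + (N_1+N_2-N_{1\cap 2}) - 3.
\]
Since $S_2 \in |\mathcal{O}_{\Q^3}(2)|$ is a Gorenstein del Pezzo surface of degree $4$ sitting in the smooth threefold $\Q^3$, Lemma \ref{lem:cone} forbids $S_2$ from being a cone; Theorem \ref{thm:dP}(4) then yields $\eu(S_2) \geq 3$. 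Combined with $N_1+N_2-N_{1\cap 2} \geq 1$ from Proposition \ref{prop:2.10}(2), this gives $\eu(F) \geq \eu(S_1) + 1$.

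Next, split by the type of the hyperplane section $S_1$. If $S_1 \cong \P^1 \times \P^1$, then $\eu(F) \geq 5$, which contradicts Lemma \ref{lem:F_0}. Hence $S_1 \cong \Q^2_0$, $\eu(S_1) = 3$, and Lemma \ref{lem:deg} with $(S_1^2 \cdot S_2) = 4$ gives $\eu(F) \leq 5$; the only numerically admissible triples $(\eu(S_2),\,N_1+N_2-N_{1\cap 2},\,\eu(F))$ are then $(3,1,4)$, $(3,2,5)$ and $(4,1,5)$.

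To exclude these, use adjunction on $S_2$ applied to the elliptic curve $C$: $C^2 = -K_{S_2}\cdot C = H \cdot C = 5$, so $C$ is a smooth elliptic curve of self-intersection $5$ on the quartic del Pezzo surface $S_2$. Split by the type of $S_2$ in the sense of \cite{h-w81, a-f83}. The normal irrational case is excluded immediately since then $\eu(S_2)=1<3$. In the non-normal cases (classes (C) and (D), with normalization $\F_2$ and $\F_0$ respectively), compute $\eu(S_2) = 3$ as in Proposition \ref{prop:const3} and apply Lemma \ref{lem:line} to further restrict $\eu(F)$; when some admissible triple remains, lift $C$ to the normalization, write its strict transform as $aC_0 + bf$ on the Hirzebruch surface, and derive a contradiction between $(-\sigma_{S_2}^*K_{S_2}\cdot \wt C)=5$, the smoothness of $C$, and $p_a(\wt C) = 1$, in the spirit of the cone analysis at the end of the proof of Proposition \ref{prop:No17}. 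In the normal rational case, $B_2(S_2) \in \{1,2\}$ and the minimal resolution $\wt S_2$ is a weak del Pezzo of degree $4$ with four or five contracted $(-2)$-curves; pull $C$ back and use self-intersection and genus calculations to rule out each singularity configuration.

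The main obstacle is this last step: the case-by-case exclusion of an elliptic quintic from each of the degenerate quartic del Pezzo surfaces allowed by the numerical constraints requires combining the classification in \cite{h-w81, a-f83} with careful intersection and adjunction computations on the normalization or minimal resolution.
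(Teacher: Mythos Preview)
Your direct attack on $\Q^3$ is unnecessary and remains incomplete. The paper's proof is a single observation you have overlooked: when $W_1=\Q^3$ and $C$ is an elliptic quintic, $V$ is the Fano 3-fold No.~17 in \cite[Table~2]{m-m81}, and its \emph{other} extremal contraction $\varphi_2$ realizes $V$ as the blow-up of $\P^3$ along an elliptic curve of degree~5. Swapping the roles of $\varphi_1$ and $\varphi_2$, the case is already excluded by Proposition~\ref{prop:No17}. No new computation is required.

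By contrast, your plan tries to redo the analysis entirely on the $\Q^3$ side. The first two paragraphs are fine and correctly reduce to $S_1\cong\Q^2_0$ with $\eu(F)\in\{4,5\}$, but the final step---ruling out a smooth elliptic curve with $C^2=5$ on each admissible quartic del Pezzo surface $S_2$---is only sketched. You yourself flag this as ``the main obstacle,'' and indeed the case analysis over the configurations of \cite{qi02} and the non-normal classes of \cite{a-f83} would be substantially longer than what is needed; you also appeal to Lemma~\ref{lem:line}, which in the paper is stated only for class~(C) with reducible $\ol E_{S_2}$ and does not directly cover class~(D). So as written the argument has a genuine gap. The fix is not to push harder on the case split but to notice the second contraction and invoke Proposition~\ref{prop:No17}.
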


\begin{proof}
Suppose that $C$ is an elliptic curve of degree 5. Then $V$ is of No.17 in \cite[Table 2]{m-m81} and hence given by the blow-up $\varphi_2\colon V \rightarrow \P^3$ of $\P^3$ along an elliptic curve of degree 5 (see \cite[\S I\hspace{-.1em}I\hspace{-.1em}I-3]{ma95}). Applying Proposition \ref{prop:No17} by exchanging the subscripts of $\varphi_1$ and $\varphi_2$, we have the assertion.
\end{proof}

\begin{prop}\label{prop:No23}
The center of the blow-up $C$ cannot be an elliptic curve of degree 4.
\end{prop}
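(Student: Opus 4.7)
The plan is to assume that $C$ is a smooth elliptic curve of degree $4$ and derive a contradiction to the Euler characteristic equality of Proposition~\ref{prop:2.10}. By~(\ref{eq:H}), $S_1$ is a hyperplane section of $\Q^3$ and $S_2 \in |\mathcal{O}_{\Q^3}(2)|$ is a Gorenstein del Pezzo surface of degree $4$; by Lemma~\ref{lem:cone}, $S_2$ is not a cone, so Theorem~\ref{thm:dP} forces $\eu(S_2)\geq 3$. I would split the argument according to whether $S_1\cong \P^1\times\P^1$ or $S_1\cong\Q^2_0$.

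In the case $S_1\cong \P^1\times\P^1$, substituting $\eu(S_1)=4$, $B_3(\Q^3)=0$, $p_a(C)=1$ and $N_1+N_2-N_{1\cap 2}\geq 1$ into Proposition~\ref{prop:2.10} gives
\[
\eu(F) \;\geq\; \eu(S_2) + 2 \;\geq\; 5,
\]
which contradicts Lemma~\ref{lem:F_0}.

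In the case $S_1\cong\Q^2_0$, one has $\eu(S_1)=3$, and since $B_2(\Q^2_0)=1$ is minimal among hyperplane sections of $\Q^3$, condition (I\hspace{-.1em}I) of Lemma~\ref{lem:const1} holds automatically by Proposition~\ref{prop:condiII}. If $S_2$ is normal and rational, Proposition~\ref{prop:5.3} applies and gives $B_1(F)=0$ and $B_2(F)=B_2(S_2)+N_1+1$; combined with $\eu(F)\leq 1+(S_1^2\cdot S_2)=5$ from Lemma~\ref{lem:deg}, this forces $B_2(S_2)+N_1\leq 3$. Since $F\sim -K_{\Q^2_0}$ is a connected anticanonical divisor of degree $4$, an analysis of its possible decomposition into rulings and conics together with the vanishing $B_1(F)=0$ should rule out the intermediate configurations (for instance, a Mayer-Vietoris calculation shows that two rulings plus a smooth conic forces $B_1(F)\geq 1$), leaving only the extremal possibility that $F$ consists of four concurrent rulings through the vertex $v$ of $\Q^2_0$. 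From this I would deduce $v\in S_2$ and that $S_2$ contains four distinct lines through $v$; combining this with the containment of the smooth elliptic quartic $C\subset S_2$ and the prescribed structure of $C\cap F$ should yield the contradiction. The non-normal case for $S_2$ is handled via the Abe-Furushima classification (restricted to degree $4$) together with Lemma~\ref{lem:line}, and the normal irrational case is already excluded by Lemma~\ref{lem:cone}.

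The main obstacle, as I see it, is the sub-case of $S_1\cong\Q^2_0$ in which $F$ is four concurrent rulings through the vertex of the cone: the Euler-characteristic bookkeeping alone permits this configuration, and excluding it requires delicate local analysis at the vertex of $\Q^2_0$, together with a precise understanding of how the smooth elliptic quartic $C\subset S_2$ can meet $F$ in $N_1\in\{1,2\}$ points with intersection multiplicities totaling $4$.
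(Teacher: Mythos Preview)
Your overall plan---splitting on whether $S_1\cong\P^1\times\P^1$ or $S_1\cong\Q^2_0$ and, in the latter case, invoking Proposition~\ref{prop:5.3}---matches the paper, and the $\P^1\times\P^1$ case is fine. But in the $\Q^2_0$ case you miss the decisive idea and propose the wrong endgame.

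First, a simplification you overlooked: an elliptic quartic $C\subset\Q^3$ is a complete intersection of a hyperplane $H$ and a member of $|\mathcal{O}_{\Q^3}(2)|$, so for \emph{any} $S_2\in|\mathcal{O}_{\Q^3}(2)\otimes\mathcal{I}_C|$ the hyperplane $H$ cuts $S_2$ exactly in $C$. Thus $C$ is a smooth ample Cartier divisor on $S_2$, and a surface is smooth along any smooth Cartier divisor; hence $S_2$ is automatically normal (and rational by Lemma~\ref{lem:cone} and \cite{h-w81}). The non-normal discussion is never needed.

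Second, your case analysis is incomplete. After Proposition~\ref{prop:5.3} gives $B_1(F)=0$ and $B_2(F)=B_2(S_2)+N_1+1\in\{3,4\}$, your Mayer--Vietoris observation does rule out conic components (an irreducible conic on $\Q^2_0$ avoids the vertex, so two rulings plus a conic already has $B_1\ge 1$), but this leaves \emph{two} configurations, not one: $F$ equal to four concurrent rulings with $B_2(S_2)\le 2$, or $F$ equal to three concurrent rulings (the cycle $S_1\cap S_2$ being non-reduced) with $B_2(S_2)=1$. You only name the first.

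Third, and most importantly, the contradiction is not obtained by ``delicate local analysis at the vertex'' or by tracking how $C$ meets $F$. The paper's key move is to change viewpoint from $S_1$ to $S_2$: the rulings composing $F$ are lines on the normal degree-$4$ del Pezzo surface $S_2$, hence images of $(-1)$-curves on its minimal resolution, and they are concurrent (at the image of the vertex). The classification in \cite[Appendix]{qi02} then does the work: when $B_2(S_2)\le 2$ no four such lines are concurrent, and when $B_2(S_2)=1$ there are at most two lines in total. Both surviving configurations are therefore impossible. Your proposed local/intersection-multiplicity argument is not obviously wrong, but it is vague where the paper's argument is a direct lookup, and it is aimed at the wrong surface.
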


\begin{proof}
Suppose that $C$ is an elliptic curve of degree 4. Since $C$ is a complete intersection of divisors in $\left| \mathcal{O}_{\Q^3}(1)\right|$ and $\left|\mathcal{O}_{\Q^3}(2)\right|$, the surface $S_2$ is normal. Also, $S_2$ is rational by \cite{h-w81} and Lemma \ref{lem:cone}. By the same method as in Proposition \ref{prop:9.2}, we obtain
\[
5 \geq \eu(F) \geq \eu(S_1) + \eu(S_2) +2p_a(C)-4 \geq \eu(S_1)+1.
\]
Therefore $S_1$ cannot be $\P^1 \times \P^1$ by Lemma \ref{lem:F_0}. Hence $S_1 \cong \Q^2_0$ and $\Q^3 \setminus S_1 \cong \A^3$ by \cite[Theorem A]{fur93}. We can rewrite the above inequality as
\[
5 \geq \eu(F) \geq \eu(S_2) +2p_a(C)-1 = \eu(S_2)+1 \geq 4.
\]
By Proposition \ref{prop:5.3}, one of the following two cases occurs:
\begin{enumerate}
\item[\textup{(1)}] $B_1(F)=0, B_2(F)=4, B_2(S_2) \leq 2$ or
\item[\textup{(2)}] $B_1(F)=0, B_2(F)=3, B_2(S_2) = 1$.
\end{enumerate}

For each case, the intersection $F$ is a sum of concurrent lines since $\Q^2_0$ contains $F$. However, each sum of four $(-1)$-curves in $S_2$ is not concurrent when $B_2(S_2) \leq 2$ and there is at most two $(-1)$-curves in $S_2$ when $B_2(S_2)=1$ by \cite[Appendix: Configurations of the Singularity types]{qi02}. Hence we have a contradiction to the existence of $F$.
\end{proof}

\subsection{The case $W_1=V_5$}\label{sec:V5}
In this subsection, we assume that $W_1=V_5$. By (\ref{eq:H}), the surface $S_1$ (resp.\,$S_2$) is a hyperplane section which does not contain $C$ (resp.\,a hyperplane section which contains $C$ such that $C \not\subset \Sing\,S_2$). By \cite[Table 2]{m-m81}, $C$ is either a smooth rational curve of degree $1 \leq d \leq 3$ or a complete intersection of two hyperplane sections.
To prove Theorem \ref{thm:main} (1), we have only to show the following:
\begin{prop}\label{prop:No14}
The center of the blow-up $C$ cannot be a complete intersection of two hyperplane sections.
\end{prop}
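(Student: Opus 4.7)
I plan to follow the template of Propositions~\ref{prop:No17} and~\ref{prop:No23}: combine Euler--characteristic inequalities from Lemma~\ref{lem:deg}, Proposition~\ref{prop:2.10}, and Theorem~\ref{thm:dP} with the Gorenstein del Pezzo classification to derive a contradiction.

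The numerical setup is as follows. Since $C = T_1 \cap T_2$ is a complete intersection of two hyperplane sections $T_i\sim H$, adjunction on $V_5$ (where $-K_{V_5}\sim 2H$) gives $K_C\sim 0$ and $\deg C = H^3 = 5$, so $C$ is a smooth elliptic curve of degree $5$. Using $B_3(V_5)=0$ and $p_a(C)=1$, Proposition~\ref{prop:2.10} together with $N_1+N_2-N_{1\cap 2}\geq 1$ yields $\eu(F)\geq \eu(S_1)+\eu(S_2)-2$, while Lemma~\ref{lem:deg} gives $\eu(F)\leq 1+(S_1^2\cdot S_2)=6$. Each $S_i$ is a Gorenstein del Pezzo surface of degree $5$ embedded in the smooth threefold $V_5$, so Lemma~\ref{lem:cone} excludes cones and Theorem~\ref{thm:dP}~(4) forces $\eu(S_i)\geq 3$. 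Consequently $\eu(S_1)+\eu(S_2)\leq 8$ and $3\leq \eu(S_i)\leq 5$, so neither $S_i$ can be smooth.

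The key additional constraint I would exploit is that $C\sim -K_{S_2}$ on $S_2$: the restriction $S_2\cap T_1=C$ is Cartier on $S_2$, and the numerical identities $C\cdot(-K_{S_2})=5=(-K_{S_2})^2$ with $C^2=5$ (from adjunction, using $p_a(C)=1$) force linear equivalence on the Gorenstein del Pezzo $S_2$. Thus $S_2$ must carry a smooth elliptic anticanonical divisor. When condition~(I\hspace{-.1em}I) of Lemma~\ref{lem:const1} holds---by Proposition~\ref{prop:condiII} and Furushima's classification \cite{fur00}, this amounts to $S_1\in\{H_5^0,H_5^\infty\}$, both with $\eu(S_1)=3$---and $S_2$ is normal and rational, Proposition~\ref{prop:5.3} sharpens the estimate to $\eu(F)=B_2(S_2)+N_1+2$; combined with $\eu(F)\leq 6$ and $N_1\geq 1$ this forces $B_2(S_2)\leq 3$, so the minimal resolution $\tilde S_2$ carries total ADE rank at least $2$.

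The remaining task is a combinatorial enumeration, which I expect to be the hard part: for each normal Gorenstein del Pezzo $S_2$ of degree $5$ with $B_2(S_2)\leq 3$, check whether there is an anticanonical divisor $F$ of Euler number at most $6$ arising as the restriction $S_1\cap S_2$ and compatible with the coexistence of a smooth elliptic anticanonical $C$; the exclusion should proceed via the $(-1)$-curve configuration tables of \cite[Appendix]{qi02}, paralleling Proposition~\ref{prop:No23}. The non-normal case of $S_2$ is dispatched by Lemma~\ref{lem:line} (for class (C) of \cite{a-f83}) or by a direct adjunction computation on the normalization for the remaining classes, in analogy with the cone subcase of Proposition~\ref{prop:No17}. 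Finally, the residual case where $\eu(S_1)\geq 4$ and condition~(I\hspace{-.1em}I) fails (so Proposition~\ref{prop:5.3} is unavailable) must be handled using only the raw estimate $\eu(F)\geq \eu(S_1)+1$ together with the del Pezzo classification of $S_1$; I anticipate that the incompatibility between the smoothness of $C\sim -K_{S_2}$ and the required small Euler number of $F$ will yield the contradiction in every case.
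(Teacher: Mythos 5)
Your setup is sound and follows the paper's own strategy: the bound $6\geq 1+B_2(F)\geq \eu(S_1)+\eu(S_2)-2$, the identification $C\sim -K_{S_2}$ with $C$ a smooth elliptic quintic, normality and rationality of $S_2$, and the use of Proposition \ref{prop:5.3} to pin down $B_2(S_2)$ are all exactly the ingredients the paper uses. But the proposal stops where the actual proof begins. Two concrete gaps remain. First, you never eliminate the case $\eu(S_1)\geq 4$; you defer it with ``I anticipate that the incompatibility \dots will yield the contradiction.'' The paper closes this first, by a short but genuine argument: if $\eu(S_1)\geq 4$ then $B_2(F)\geq 4$, so $F$ (of total anticanonical degree $5$) contains at least three lines inside $S_2$; Qiang's configuration tables then force $\eu(S_2)\geq 4$, whence equality everywhere and $F$ is a union of five lines, contradicting the line count on $S_2$. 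Only after this does one know $\eu(S_1)=3$, hence that $S_1$ is one of Furushima's two hyperplane sections and that condition (I\hspace{-.1em}I) holds, which is what licenses Proposition \ref{prop:5.3}. Your plan inverts this logic by assuming (I\hspace{-.1em}I) in the main branch and leaving the complementary branch unresolved.

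Second, and more seriously, the ``combinatorial enumeration'' you flag as the hard part is the heart of the proof, and the key mechanism is not the one you name. The coexistence of a smooth elliptic $C\in\lvert -K_{S_2}\rvert$ is not by itself incompatible with small $\eu(F)$: the degree-$5$ del Pezzo with an $A_4$-singularity does carry smooth elliptic anticanonical members. The paper's contradiction in the residual case $B_2(F)=3$, $B_2(S_2)=1$ is a concurrency argument: $F$ is a reduced Cartier divisor equal to the unique line plus two conics; the line passes through the $A_4$-point $p_1$, so $p_1\in\Sing F$; since $N_1=\sharp(C\cap F)=1$ and $C$ is ample on $S_2$, every component of $F$ passes through the single point $p_2$ of $C\cap F$, and $B_1(F)=0$ forces $\Sing F=\{p_2\}$; hence $p_1=p_2\in C$, contradicting that the smooth Cartier divisor $C$ must avoid $\Sing S_2$. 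The cases $B_2(F)=5$ and $B_2(F)=4$ are excluded by line/conic counts on $S_1$ (via Lemma \ref{lem:line} and the fact that $H_5^{\infty}$ contains no conic), not by any property of $C$. None of these steps appears in your proposal, so as written it is a plausible plan rather than a proof.
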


\begin{proof}
Suppose that $C$ is a complete intersection of two hyperplane sections. Then the surface $S_2$ is normal and rational by Lemma \ref{lem:cone} and \cite{h-w81}. By Proposition \ref{prop:2.10} and Lemma \ref{lem:deg}, we have
\begin{equation}\label{eq:No14}
\begin{aligned}
6 \geq 1 + B_2(F) &\geq \eu(S_1) + \eu(S_2)+N_1 +N_2 -N_{1 \cap 2}-3 \\
 &\geq \eu(S_1) + \eu(S_2)-2.
\end{aligned}
\end{equation}
\begin{claim}
It holds that $\eu(S_1) =3$.
\end{claim}
\begin{claimproof}
By Theorem \ref{thm:dP} (4) and Lemma \ref{lem:cone}, we have $\eu(S_i) \geq 3$ for $i=1,2$. Suppose that $\eu(S_1)\geq 4$. By (\ref{eq:No14}), we have $B_2(F) \geq 4$ and hence $F \subset S_2$ has at least three lines. By \cite[Appendix: Configurations of the Singularity types]{qi02}, the surface $S_2$ contains only one line if $\eu(S_2)=3$.
Hence $\eu(S_2) \geq 4$. By (\ref{eq:No14}), we have $\eu(S_2)=4$ and $F$ is a sum of 5 lines.
This contradicts the fact that $S_2$ contains at most three lines by \cite[Appendix: Configurations of the Singularity types]{qi02}.
\hfill $\blacksquare$
\end{claimproof}
Hence $S_1$ is either normal with an $A_4$-singularity which contains only one $(-1)$-curve by \cite[Appendix: Configurations of the Singularity types]{qi02} or non-normal belonging to the class (C) of \cite{a-f83}. Hence $V_5 \setminus S_1 \cong \A^3$ by \cite[Theorem A]{fur93}. By Proposition \ref{prop:5.3}, one of the following three cases occurs:
\begin{enumerate}
\item[\textup{(1)}] $B_2(F)=5$
\item[\textup{(2)}] $B_2(F)=4, B_2(S_2) \leq 2, $
\item[\textup{(3)}] $B_2(F)=3, B_1(F)=0, B_2(S_2)=1$
\end{enumerate}

If the case (1) occurs, then $F$ is a sum of 5 lines. However, $S_1$ cannot contain such $F$ by \cite[Appendix: Configurations of the Singularity types]{qi02} and Lemma \ref{lem:line}, a contradiction.

If the case (2) occurs, then $F$ contains 3 lines. Hence $S_1$ must be non-normal and $S_2$ contains at most three lines. Hence $F \subset S_1$ contains a conic, but $S_1$ contains no conic (see \cite[Lemma 4.5, 4.6]{ki05}), a contradiction.

Hence the case (3) must occur. Since $S_2$ contains the unique $(-1)$-curve, $F$ is a sum of the line and two conics. In particular, the intersection $S_1 \cap S_2$ is reduced. Since the line contains an $A_4$-singular point $p_1 \in S_2$, we obtain $p_1 \in \Sing\,F$. 

Note that $N_1+N_2-N_{1 \cap 2}=1$ by (\ref{eq:No14}) and hence $N_1=\sharp (C \cap F)=1$ by Proposition \ref{prop:2.10} (2). Let $p_2\in C \cap F$ be the point. Since $C$ is an ample divisor on $S_2$, all irreducible component of $F$ must contain $p_2$. Since $B_1(F)=0$, the point $p_2$ is the only concurrent point on $F$ and hence we have $p_1=p_2$.
Then the smooth Cartier divisor $C$ on $S_2$ contains a singular point $p_1$, which gives a contradiction.
\end{proof}

\section{Exclusion of primitive Fano 3-folds}\label{sec:excpri}
The aim of this section is to prove Theorem \ref{thm:main} (1) when $V$ is primitive.
Hence it suffices to show that the following cases cannot occur (see \cite[\S I\hspace{-.1em}I\hspace{-.1em}I-3]{ma95}):
\begin{itemize}
\item $V$ is a Fano 3-fold of No.2. 
It is given by a double covering $g \colon V \rightarrow \P^1 \times \P^2$ whose branch locus $B$ is a divisor of bidegree $(2, 4)$. 
One extremal contraction $\varphi_1=\mathrm{pr}_1 \circ g$ of $V$ is of $D_1$-type and the other $\varphi_2=\mathrm{pr}_2 \circ g$ is of $C_1$-type. 
We also have $B_3(V)=40$.
\item $V$ is a Fano 3-fold of No.6. 
Its extremal contractions $\varphi_1$ and $\varphi_2$ are $C_1$-type of discriminant degree 6. 
We also have $B_3(V)=18$.
\item $V$ is a Fano 3-fold of No.8. 
It is given by a double covering $g \colon V \rightarrow V_7\coloneqq \P_{\P^2}(\mathcal{O} \oplus \mathcal{O}(1))$ whose branch locus is a member of $\left| -K_{V_7} \right|$. 
Note that $V_7$ has extremal contractions $\psi_1\colon V_7 \rightarrow \P^3$ and $\psi_2\colon V_7 \rightarrow \P^2$. 
One extremal contraction $\varphi_1$ of $V$ is the Stein factorization of $\psi_1 \circ g$, which is of $E_3$ or $E_4$-type, and the other $\varphi_2$ is the composite $\psi_2 \circ g$, which is of $C_1$-type. 
We also have $B_3(V)=18$.
\item $V$ is a Fano 3-fold of No.18. 
It is given by a double covering $g \colon V \rightarrow \P^1 \times \P^2$ whose branch locus $B$ is a divisor of bidegree $(2, 2)$.
One extremal contraction $\varphi_1=\mathrm{pr}_1 \circ g$ of $V$ is of $D_2$-type, and the other $\varphi_2=\mathrm{pr}_2 \circ g$ is of $C_1$-type.
We also have $B_3(V)=4$.
\item $V$ is a Fano 3-fold of No.32. It is a divisor on $\P^2 \times \P^2$ of bidegree $(1,1)$.
\item $V$ is a Fano 3-fold of No.35. It is the blow-up of $\P^3$ at a point.
\end{itemize}
In what follows, we exclude the possibilities of above six cases separately.
\begin{prop}
The Fano 3-fold $V$ cannot be of either No.32 or No.35.
\end{prop}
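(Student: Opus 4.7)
The plan is to apply Proposition \ref{prop:2.6}~(2), which asserts that for any triplet with $(\dagger)$, if both extremal contraction lengths satisfy $\mu_1,\mu_2\geq 2$ then $V\cong \P^1\times\P^2$. Since $\P^1\times\P^2$ is the Fano 3-fold of No.34 in \cite{m-m81}, it is isomorphic to neither No.32 nor No.35. Thus it suffices to compute $\mu_1$ and $\mu_2$ for these two Fano 3-folds and observe that both are equal to $2$.

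For No.32, both $\varphi_i\colon V\to \P^2$ ($i=1,2$) are $\P^1$-bundles, obtained as the restrictions of the projections $\P^2\times\P^2\to\P^2$. Since the general fibers are $\P^1$'s contained in $V$, the curves $l_i$ as in Notation \ref{no:Fano} can be taken to be such fibers, and by the adjunction $-K_V|_{l_i}=-K_{l_i}$ we get $\mu_i=(-K_V\cdot l_i)=\eu(\P^1)=2$ for $i=1,2$. (Equivalently, using adjunction from $\P^2\times\P^2$, we have $-K_V=2(H_1+H_2)$ on $V$, which gives $-K_V\cdot l_i = 2$ directly.)

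For No.35, the variety is $V\cong \P_{\P^2}(\mathcal{O}\oplus\mathcal{O}(1))$, which is also the blow-up $\psi_1\colon V\to\P^3$ at a point. The second extremal contraction $\psi_2\colon V\to\P^2$ is the $\P^1$-bundle projection, whose fibers $l_2\cong\P^1$ satisfy $(-K_V\cdot l_2)=2$ as above, so $\mu_2=2$. The contraction $\psi_1$ sends the exceptional divisor $E\cong\P^2$ to a point; for a line $l_1\subset E$, adjunction gives $(K_V+E)|_E=K_E$, so $K_V|_E=-3h+h=-2h$ where $h$ is the hyperplane class on $E\cong\P^2$, and therefore $(-K_V\cdot l_1)=2$, i.e.\ $\mu_1=2$.

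In either case we have $\mu_1,\mu_2\geq 2$, so Proposition \ref{prop:2.6}~(2) forces $V\cong \P^1\times \P^2$, contradicting the fact that $V$ is of type No.32 or No.35. There is no real obstacle: the entire argument reduces to the routine verification of the two lengths, and the substantive input is Proposition \ref{prop:2.6}~(2), which has already been proved.
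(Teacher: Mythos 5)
Your proposal is correct and is essentially identical to the paper's own proof: the paper likewise observes that No.32 and No.35 each have two distinct extremal contractions of length two and then concludes via Proposition \ref{prop:2.6}~(2). The only difference is that you spell out the length computations, which the paper leaves implicit.
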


\begin{proof}
Each of Fano 3-folds of No.32 and No.35 has different extremal contractions of length two. Hence we have the assertion by Proposition \ref{prop:2.6} (2).
\end{proof}

\begin{prop}
The Fano 3-fold $V$ cannot be of No.2.
\end{prop}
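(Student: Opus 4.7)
My plan is to derive a contradiction with Lemma~\ref{lem:euler}, exploiting that $B_3(V)=40$ is very large for a Fano 3-fold of No.2.

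First, I pin down $D_1$ and $D_2$ up to linear equivalence. The $C_1$-type conic bundle $\varphi_2$ has singular fibers consisting of two lines of $(-K_V)$-degree $1$, so $\mu_2=1$. The general fiber of the $D_1$-type del Pezzo fibration $\varphi_1$ is a smooth degree-$2$ del Pezzo surface, which carries $(-1)$-curves of $(-K_V)$-degree $1$, so $\mu_1=1$. Applying Proposition~\ref{prop:2.6}(1) to each of $k=1,2$, both $H_1$ and $H_2$ are linearly equivalent to boundary divisors; comparing the two $\Z$-bases $\{H_1,H_2\}$ and $\{D_1,D_2\}$ of $\Pic V$, I conclude $\{D_1,D_2\}=\{H_1,H_2\}$ up to permutation. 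After relabeling, $D_1$ is a fiber of $\varphi_1$ and $D_2=\varphi_2^{-1}(L)$ is the preimage of a line $L\subset \P^2$.

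Second, I bound $\eu(D_1\cap D_2)$ from above. Using $h_1^2=0$ on $\P^1\times \P^2$ together with the fact that $g$ has degree $2$, a direct intersection-theoretic calculation gives
\[
D_1\cdot D_2\cdot (-K_V) \;=\; H_1\cdot H_2\cdot (H_1+H_2) \;=\; H_1^2 H_2 + H_1 H_2^2 \;=\; 0+2 \;=\; 2.
\]
Since $-K_V$ is ample, every irreducible component of the curve $D_1\cap D_2$ has $(-K_V)$-degree at least $1$, so there are at most two components. Accordingly $b_0(D_1\cap D_2)\leq 2$ and $b_2(D_1\cap D_2)\leq 2$, yielding the crude but sufficient estimate $\eu(D_1\cap D_2)\leq 4$.

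Third, I bound $\eu(D_i)$ from below. Each $D_i$ is an irreducible projective surface realized as a double cover of a rational surface ($D_1\to \P^2$ and $D_2\to \P^1\times L\cong \P^1\times \P^1$ via $g$), hence unirational, hence rational by Castelnuovo. Consequently $b_1(D_i)=b_3(D_i)=0$, and $\eu(D_i)=2+b_2(D_i)\geq 3$.

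Combining these ingredients, Lemma~\ref{lem:euler} with $B_3(V)=40$ produces
\[
\eu(D_1\cap D_2) \;=\; \eu(D_1)+\eu(D_2)+B_3(V)-5 \;\geq\; 3+3+35 \;=\; 41,
\]
in stark contradiction to $\eu(D_1\cap D_2)\leq 4$. The one subtlety I anticipate is justifying $b_1=b_3=0$ when $D_i$ is allowed to be singular or non-normal; but the gap between $41$ and $4$ is so wide that even the crudest bound such as $\eu(D_i)\geq 0$ (far weaker than any rationality argument) would already give the contradiction, so no delicate estimate is really needed.
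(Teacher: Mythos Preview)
Your overall strategy matches the paper exactly: identify $D_i\sim H_i$ via Proposition~\ref{prop:2.6}, bound $\eu(D_1\cap D_2)$ from above via the anticanonical degree, bound each $\eu(D_i)$ from below, and invoke Lemma~\ref{lem:euler} with $B_3(V)=40$. The computation $D_1\cdot D_2\cdot(-K_V)=2$ and the resulting bound $\eu(D_1\cap D_2)\le 4$ are correct and coincide with the paper's.

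The gap is in your lower bound on $\eu(D_i)$. The step ``double cover of a rational surface $\Rightarrow$ unirational'' is false in general---K3 surfaces are double covers of $\P^2$ branched over sextics---and Castelnuovo's criterion is stated for smooth surfaces, whereas $D_i$ may well be singular or non-normal. Your fallback claim $\eu(D_i)\ge 0$ is likewise not a general fact about irreducible projective surfaces (ruled surfaces over curves of genus $\ge 2$ have negative Euler characteristic), and you offer no argument specific to these $D_i$. So as written, neither the bound $\eu(D_i)\ge 3$ nor the weaker $\eu(D_i)\ge 0$ is actually established.

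The paper closes this gap without any rationality discussion, using the finite degree-$2$ map $g|_{D_i}$ directly. For any such map one has $\eu(X)=2\,\eu(Y)-\eu(\text{branch})$, whence
\[
\eu(D_1)=2\,\eu(\P^2)-\eu\bigl(B|_{g(D_1)}\bigr)\ \ge\ 6-5=1,
\]
since a connected plane curve of degree $4$ has at most four components and hence Euler number at most $5$; similarly $\eu(D_2)\ge 8-7=1$. These already yield $\eu(D_1\cap D_2)\ge 1+1+40-5=37>4$. Replacing your third paragraph with this two-line computation makes the argument complete.
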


\begin{proof}
Suppose that $V$ is a Fano 3-fold of No.2. We may assume that $D_1 \sim H_1$ and $D_2 \sim H_2$ by Proposition \ref{prop:2.6}. 

Let us compute topological Euler characteristics of boundaries. As $g(D_1) \cong \P^2$ and $B|_{g(D_1)} \sim \mathcal{O}_{\P^2}(4)$, it holds that 
\[
\eu(D_1) =2\eu({g(D_1)})-\eu(B|_{g(D_1)}) \geq 6-5=1.
\]
As $g(D_2) \cong \P^1 \times \P^1$ and $B|_{g(D_2)} \sim \mathcal{O}_{\P^1 \times \P^1}(2,4)$, it holds that
\[
\eu(D_2) = 2\eu({g(D_2)})-\eu(B|_{g(D_2)}) \geq 8-(1+2+4)=1.
\]
As $-K_V$ is ample, it holds that 
\[
\eu(D_1 \cap D_2) \leq B_0(D_1 \cap D_2)+B_2(D_1 \cap D_2) \leq 2(D_1 \cdot D_2 \cdot -K_V) =4.
\]
Hence we obtain the following inequality which contradicts Lemma \ref{lem:euler}.
\[
\eu(D_1 \cap D_2) \leq 4 < 37 \leq \eu(D_1) + \eu(D_2) +B_3(V)-5.
\]
\end{proof}

\begin{prop}
The Fano 3-fold $V$ cannot be of No.6.
\end{prop}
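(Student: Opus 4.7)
The plan is to follow the template of the previous case and contradict Lemma \ref{lem:euler} by Euler characteristic estimates. Both extremal contractions of $V$ are of type $C_1$, so $\mu_1 = \mu_2 = 1$, and Proposition \ref{prop:2.6}(1) together with Lemma \ref{lem:2.1} allows me to assume, after permuting $D_1$ and $D_2$, that $D_1 \sim H_1$ and $D_2 \sim H_2$. Consequently $D_i = \varphi_i^{-1}(l_i)$ for some line $l_i \subset \P^2$, and the primeness of $D_i$ forces $l_i$ to not lie inside the degree-$6$ discriminant $\Delta_i$ of $\varphi_i$, since otherwise the conic fibration $\varphi_i|_{D_i}$ would be either generically reducible or generically non-reduced.

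Next I would bound $\eu(D_i)$ from below via the conic fibration $\varphi_i|_{D_i} \colon D_i \to l_i \cong \P^1$. Its general fiber is a smooth conic with $\eu = 2$, while each fiber over a point of $l_i \cap \Delta_i$ is either a pair of distinct lines ($\eu = 3$) or a double line ($\eu = 2$). Stratifying by these loci yields
\[
\eu(D_i) \;=\; 2\bigl(2 - \#(l_i \cap \Delta_i)\bigr) + \sum_{p \in l_i \cap \Delta_i} \eu(F_p) \;\geq\; 4.
\]

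On the other hand, I would bound $\eu(D_1 \cap D_2)$ from above via intersection theory. Since $H_i$ is pulled back from $\P^2$ we have $H_i^3 = 0$, and combined with $-K_V \sim H_1 + H_2$ and $(-K_V)^3 = 12$ (from \cite[Table 2]{m-m81}), this gives $(H_1^2 \cdot H_2) + (H_1 \cdot H_2^2) = 4$, hence $(D_1 \cdot D_2 \cdot -K_V) = 4$. Exactly as for No.2, the ampleness of $-K_V$ then produces
\[
\eu(D_1 \cap D_2) \;\leq\; B_0(D_1 \cap D_2) + B_2(D_1 \cap D_2) \;\leq\; 2(D_1 \cdot D_2 \cdot -K_V) \;=\; 8.
\]
Combining this with Lemma \ref{lem:euler} and $B_3(V) = 18$, I deduce $\eu(D_1 \cap D_2) \geq 4 + 4 + 13 = 21$, contradicting the bound $8$ just obtained.

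The main difficulty I expect is pinning down the lower bound $\eu(D_i) \geq 4$ cleanly. It rests on confirming that $D_i$ really is a flat conic fibration over $l_i$ whose fibers take only the three classical forms (smooth conic, pair of distinct lines, or double line), which in turn relies on the primeness of $D_i$ forcing $l_i \not\subset \Delta_i$. Once this structural fact is in hand, the numerical estimates and the final contradiction are essentially immediate.
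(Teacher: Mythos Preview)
Your approach matches the paper's: bound $\eu(D_i)$ from below via the conic fibration, bound $\eu(D_1\cap D_2)$ from above via $(-K_V)$-degree, and contradict Lemma~\ref{lem:euler}. The numerics and the upper bound $\eu(D_1\cap D_2)\le 8$ are correct.

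The gap is exactly where you suspected. Primeness of $D_i$ does \emph{not} force $l_i\not\subset\Delta_i$. If $l_i$ happens to be an irreducible component of $\Delta_i$, the general fibre of $\varphi_i|_{D_i}$ is a pair of distinct lines; the total space $D_i$ is then reduced, and it is irreducible precisely when the monodromy along $l_i$ swaps the two line-components. So ``generically reducible fibres'' is compatible with $D_i$ being prime, and your justification for $l_i\not\subset\Delta_i$ breaks down. (Your other clause is fine: generically non-reduced fibres would make $D_i$ a non-reduced divisor, contradicting primeness.)

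The paper simply handles the possibility $l_i\subset\Delta_i$ separately rather than excluding it: the general fibre then has Euler number $3$, and the special fibres (non-reduced lines, occurring over the at most $6-1=5$ intersection points of $l_i$ with the residual degree-$5$ part of $\Delta_i$) have Euler number $2$, giving
\[
\eu(D_i)\ \ge\ 3\bigl(\eu(\P^1)-5\bigr)+2\cdot 5\ =\ 1.
\]
With this weaker bound one still obtains $\eu(D_1)+\eu(D_2)+B_3(V)-5\ge 1+1+18-5=15>8$, so the contradiction survives. To repair your argument, either drop the unjustified exclusion of $l_i\subset\Delta_i$ and treat this second case as the paper does, or simply observe that the crude bound $\eu(D_i)\ge 1$ already suffices given how large $B_3(V)=18$ is.
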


\begin{proof}
Suppose that $V$ is a Fano 3-fold of No.6. We may assume that $D_1 \sim H_1$ and $D_2 \sim H_2$ by Proposition \ref{prop:2.6}. 

Let us compute topological Euler characteristics of boundaries. Fix $i\in\{1,2\}$. Then $\varphi_i|_{D_i}$ is a conic bundle. 
If the image of $\varphi_i|_{D_i}$ is not contained in the discriminant locus, then the general fiber of $\varphi_i|_{D_i}$ is smooth and we have 
\[
\eu(D_i) \geq \eu(\P^1) \times \eu(\P^1)=4.
\] 
Otherwise, the general fiber of $\varphi_i|_{D_i}$ is a reducible conic and the special fiber is a non-reduced line. Since $\varphi_i|_{D_i}$ has at most $6-1=5$ special fiber, we have 
\[
\eu(D_i) \geq 3 \times (\eu(\P^1)-5) +2 \times 5=1.
\]
Since $-K_V$ is ample, we have 
\[
\eu(D_1 \cap D_2) \leq B_0(D_1 \cap D_2)+B_2(D_1 \cap D_2) \leq 2(D_1 \cdot D_2 \cdot -K_V) =8.
\] 
Hence we obtain the following inequality which contradicts Lemma \ref{lem:euler}. 
\[
\eu(D_1 \cap D_2)\leq 8 < 15 \leq \eu(D_1) + \eu(D_2) +B_3(V)-5.
\]
\end{proof}
\begin{prop}
The Fano 3-fold $V$ cannot be of No.8.
\end{prop}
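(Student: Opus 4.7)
The plan is to derive a contradiction with Lemma \ref{lem:euler}, in parallel with the proofs for Fano 3-folds of No.2 and No.6, by bounding $\eu(D_1\cap D_2)$ from above and $\eu(D_1)+\eu(D_2)$ from below via the double cover $g\colon V\to V_7$. First I would pin down the boundary classes. Since $\varphi_1$ (of $E_3$ or $E_4$-type) and $\varphi_2$ (of $C_1$-type) both have length $\mu_1=\mu_2=1$, Proposition \ref{prop:2.6} together with $(\dagger)$ let us assume, after relabeling, that $D_1\sim H_1$ and $D_2\sim H_2$. Writing $h:=\psi_2^*\mathcal{O}_{\P^2}(1)$ and letting $e$ denote the exceptional divisor of $\psi_1$, so that $\psi_1^*\mathcal{O}_{\P^3}(1)=h+e$, the definitions of $\varphi_1$ as the Stein factorization of $\psi_1\circ g$ and of $\varphi_2=\psi_2\circ g$ give $H_1=g^*(h+e)$ and $H_2=g^*h$.

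Next, using $(g^*\alpha\cdot g^*\beta\cdot g^*\gamma)_V=2(\alpha\beta\gamma)_{V_7}$ and the standard intersection numbers $h^3=0$, $h^2e=1$, $he^2=-1$, $e^3=1$ on $V_7=\Bl_p\P^3$, a direct computation yields
\[
(D_1\cdot D_2\cdot -K_V)=H_1H_2(H_1+H_2)=2\bigl((h+e)^2h+(h+e)h^2\bigr)=4,
\]
so, as in No.2 and No.6, $\eu(D_1\cap D_2)\leq 2\cdot 4=8$. For $D_2$: since $D_2$ is prime with $[D_2]=g^*h$, it equals $g^{-1}(\psi_2^{-1}(\ell))$ for some line $\ell\subset\P^2$, whence $D_2\to\psi_2^{-1}(\ell)\cong\F_1$ is a double cover branched over $B\cap\F_1\in|-K_{\F_1}|=|2\Sigma_1+3f_1|$. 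Enumerating reduced effective decompositions of $2\Sigma_1+3f_1$ shows that the maximum topological Euler characteristic of such a divisor's support is $5$ (achieved by $\Sigma_1$ together with three distinct rulings), so $\eu(D_2)=8-\eu(B|_{\F_1})\geq 3$. For $D_1$: any member of $|\psi_1^*\mathcal{O}_{\P^3}(1)|$ through the blow-up center $p$ decomposes as the strict transform plus $e$, so the primeness of $D_1$ forces $D_1=g^{-1}(\psi_1^*P)$ for a plane $P\not\ni p$; hence $D_1\to\P^2$ is a double cover branched over a plane quartic, whose Euler characteristic is at most $5$ (achieved by four concurrent lines), so $\eu(D_1)\geq 1$.

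Combining these with $B_3(V)=18$, Lemma \ref{lem:euler} gives
\[
8\geq\eu(D_1\cap D_2)=\eu(D_1)+\eu(D_2)+B_3(V)-5\geq 1+3+13=17,
\]
a contradiction. The main obstacle I anticipate is the careful enumeration of reduced decompositions and incidence patterns of plane quartics and of divisors in $|2\Sigma_1+3f_1|$ on $\F_1$, needed to rigorously establish the Euler characteristic upper bound of $5$ in each case.
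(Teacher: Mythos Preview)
Your approach is essentially the paper's: bound $\eu(D_1)$, $\eu(D_2)$, $\eu(D_1\cap D_2)$ via the double cover $g$ and contradict Lemma~\ref{lem:euler}. There is, however, a computational slip. You assert $B|_{\F_1}\in\lvert{-K_{\F_1}}\rvert=\lvert 2\Sigma_1+3f_1\rvert$, but $B\in\lvert{-K_{V_7}}\rvert$ and adjunction gives $-K_{V_7}|_{\F_1}=-K_{\F_1}+\F_1|_{\F_1}$; since $\F_1\sim h$ and $h|_{\F_1}=f_1$, one has $B|_{\F_1}\sim 2\Sigma_1+4f_1$. The maximal Euler characteristic of the support of such a divisor is $6$ (realised by $\Sigma_1$ and four distinct rulings), so the corrected bound is $\eu(D_2)\geq 8-6=2$, which is exactly what the paper uses. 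Your contradiction survives unchanged, since $8\geq\eu(D_1\cap D_2)=\eu(D_1)+\eu(D_2)+13\geq 1+2+13=16$.

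A second, harmless difference: your bound $\eu(D_1\cap D_2)\leq 8$ is coarser than needed. The paper notes that $g(D_1)\cap g(D_2)$ is a line in the plane $g(D_1)\cong\P^2$, hence $\cong\P^1$; then $D_1\cap D_2\to\P^1$ is a double cover branched over $B\cap g(D_1)\cap g(D_2)$, giving $\eu(D_1\cap D_2)=4-\eu(B\cap g(D_1)\cap g(D_2))\leq 4$ with no enumeration required. Either bound suffices here because $B_3(V)=18$ is so large.
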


\begin{proof}
Suppose that $V$ is a Fano 3-fold of No.8. We may assume that $D_1 \sim H_1$ and $D_2 \sim H_2$ by Proposition \ref{prop:2.6}.

Let us compute topological Euler characteristics of the boundary divisors.
As $g(D_1) = \P^2 \sim \psi_1^*\mathcal{O}_{\P^3}(1)$ and $B|_{g(D_1)} \sim \mathcal{O}_{\P^2}(4)$, it holds that
\[
\eu(D_1) = 2 \eu(\P^2) -\eu(B|_{g(D_1)}) \geq 6-5=1.
\]
As $g(D_2) = \F_1$ and $B|_{g(D_2)} \sim 2\Sigma_1+4f_1$, it holds that 
\[
\eu(D_2) = 2 \eu(\F_1) -\eu(B|_{g(D_2)}) \geq 8-6=2.
\]
As $g(D_1) \cap g(D_2) \cong \P^1$, it holds that 
\[
\eu(D_1 \cap D_2)= 2 \eu(\P^1) -\eu(g(D_1) \cap g(D_2) \cap B) \leq 4.
\]
Hence we obtain the following inequality which contradicts Lemma \ref{lem:euler}.
\[
\eu(D_1 \cap D_2)\leq 4 < 16 \leq \eu(D_1) + \eu(D_2) +B_3(V)-5.
\]
\end{proof}

\begin{prop}\label{prop:No18}
The Fano 3-fold $V$ cannot be of No.18.
\end{prop}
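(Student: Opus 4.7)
The plan is to follow exactly the same template as the preceding propositions excluding Nos.\,2, 6, and 8: bound $\eu(D_1)$ and $\eu(D_2)$ from below, bound $\eu(D_1 \cap D_2)$ from above, and derive a contradiction to Lemma~\ref{lem:euler}.

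By Proposition~\ref{prop:2.6}, since $V \not\cong \P^1 \times \P^2$, I may assume $D_1 \sim H_1$ and $D_2 \sim H_2$. Then $g(D_1)$ is a fiber of $\mathrm{pr}_1$, so $g(D_1) \cong \P^2$, and $g(D_2)$ is the preimage of a line in $\P^2$ under $\mathrm{pr}_2$, so $g(D_2) \cong \P^1 \times \P^1$. The restrictions of the branch locus $B$ then have classes $B|_{g(D_1)} \sim \mathcal{O}_{\P^2}(2)$ and $B|_{g(D_2)} \sim \mathcal{O}_{\P^1 \times \P^1}(2,2)$.

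For the lower bounds on Euler numbers of the boundary divisors, I apply the double-cover formula $\eu(D_i) = 2\eu(g(D_i)) - \eu(B|_{g(D_i)})$ together with the same type of estimate used in the proof for No.\,2: the reduced support of an effective plane curve of degree $d$ is connected and has at most $d$ irreducible components, so its Euler number is at most $1+d$; similarly, a bidegree-$(a,b)$ divisor on $\P^1 \times \P^1$ has Euler number at most $1+a+b$. This gives $\eu(B|_{g(D_1)}) \leq 3$ and $\eu(B|_{g(D_2)}) \leq 5$, hence $\eu(D_1) \geq 3$ and $\eu(D_2) \geq 3$.

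For the upper bound, I note that $H_1^2 = 0$ since $H_1$ is pulled back from $\P^1$, so
\[
D_1 \cdot D_2 \cdot (-K_V) = H_1 \cdot H_2 \cdot (H_1 + H_2) = H_1 \cdot H_2^2 = 2
\]
using $\deg g = 2$ and the intersection theory on $\P^1 \times \P^2$. Thus $\eu(D_1 \cap D_2) \leq 2(D_1 \cdot D_2 \cdot (-K_V)) = 4$. Combining this with Lemma~\ref{lem:euler} and $B_3(V)=4$ yields
\[
\eu(D_1 \cap D_2) \geq \eu(D_1) + \eu(D_2) + B_3(V) - 5 \geq 3+3+4-5 = 5,
\]
which contradicts $\eu(D_1 \cap D_2) \leq 4$. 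Among the four double-cover exclusions this is the tightest (smallest bidegree of $B$, smallest $B_3(V)$), but the margin is comfortably positive, so I do not anticipate any essential obstacle beyond a careful execution of the No.\,2 template.
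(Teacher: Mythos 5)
Your proof fails at the very first step: the assumption $D_1 \sim H_1$, $D_2 \sim H_2$ is incompatible with $(\dagger)$ for No.\,18. The contraction $\varphi_1 = \mathrm{pr}_1 \circ g$ is of $D_2$-type, i.e.\ a quadric bundle, so its length is $\mu_1 = 2$ (not $1$), while $\mu_2 = 1$ for the conic bundle $\varphi_2$. By Theorem~\ref{thm:2.3}\,(2), $-K_V \sim \mu_1 H_2 + \mu_2 H_1 = H_1 + 2H_2$ (equivalently, $K_V = g^*(K_{\P^1\times\P^2} + \tfrac12 B) = g^*\mathcal{O}(-1,-2)$). Proposition~\ref{prop:2.6}\,(1) applies only to the length-one ray and gives $D_2 \sim H_2$; then $(\dagger)$ forces $D_1 \sim H_1 + H_2$, whereas your choice gives $D_1 + D_2 \sim H_1 + H_2 \neq -K_V$. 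This is exactly why No.\,18 is not just another instance of the template for Nos.\,2, 6, 8: in those cases both rays have length one.

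With the correct class $D_1 \sim H_1+H_2$, the divisor $D_1$ is no longer a fiber of the quadric bundle, so the double-cover Euler-number formula you use for $\eu(D_1)$ does not apply, and the numerics become genuinely tight rather than ``comfortably positive.'' The paper's proof bounds $\eu(D_1\cap D_2) \leq 1 + (D_1\cdot D_2\cdot(H_1+H_2)) = 5$, shows $\eu(D_2)\geq 4$ (with a separate argument that $\eu(D_2)=4$ forces $\eu(D_1\cap D_2)\leq 4$, using the configuration of $(-1)$-curves on the degree-$4$ del Pezzo surface $D_2$ in the normal case and the class-(D) structure of \cite{a-f83} in the non-normal case), and then devotes Lemmas~\ref{lem:No18.1}--\ref{lem:No18.5} to proving $\eu(D_1)\geq 2$: if $\eu(D_1)\leq 3$ then $D_1$ is non-normal with horizontal conductor locus, its normalization is a $\P^1$-bundle over a curve of genus $0$ or $1$, and in each case one computes $\eu(D_1)\geq 3$ or $=2$. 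None of this structure is visible from your argument, so the proposal cannot be repaired by adjusting constants; the analysis of $D_1$ has to be redone from scratch.
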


\begin{proof}
Suppose that $V$ is a Fano 3-fold of No.18. Since the length of $\varphi_2$ is one, we may assume that $D_1 \sim H_1+H_2$ and $D_2 \sim H_2$ by Proposition \ref{prop:2.6}. 

Let us compute topological Euler characteristics of $D_2$ and $D_1 \cap D_2$.
Since $-K_{D_2} \sim (H_1 +H_2)|_{D_2}$, the surface $D_2$ is a Gorenstein del Pezzo surface of degree $(-K_{D_2})^2 =4$. As both $H_1 +H_2$ and $D_1|_{D_2} \sim -K_{D_2}$ are ample, we have
\begin{equation}\label{eq:No18.0}
\eu(D_1 \cap D_2) \leq 1+B_2(D_1 \cap D_2) \leq 1+(D_1 \cdot D_2 \cdot H_1 +H_2) =5.
\end{equation}

\begin{claim}\label{cl:6.4}
It holds that $\eu(D_2) \geq 4$. Moreover, $\eu(D_2) =4$ implies that $\eu(D_1 \cap D_2) \leq 4$.
\end{claim}
\begin{claimproof}
Suppose that $D_2$ is normal. Then the general fiber of the conic bundle $\varphi_2|_{D_2}$ is smooth. Hence $\eu(D_2) \geq \eu(\P^1) \times \eu(\P^1)=4$.

Suppose that $D_2$ is non-normal. Let $\sigma_{D_2} \colon \ol{D}_2 \rightarrow D_2$ be the normalization. Then $D_2$ is not a cone by Lemma \ref{lem:cone} and hence $D_2$ belongs to 
\[
\left\{
\begin{array}{ll}
\textup{the class (B) of \cite{a-f83}}&\textup{if } \ol{D}_2 \cong \P^2, \\
\textup{the class (C) of \cite{a-f83}}&\textup{if } \ol{D}_2 \cong \F_2 \textup{ or}\\
\textup{the class (D) of \cite{a-f83}}&\textup{if } \ol{D}_2 \cong \F_0. 
\end{array}
\right.
\]
We note that $\sigma_{D_2}^*(H_i|_{D_2})$ is a nef and effective divisor in $\ol{D}_2$ with self-intersection number 0 for $i=1,2$. As $(H_1 \cdot H_2 \cdot D_2)=1$, we have $\sigma_{D_2}^*(H_1|_{D_2}) \not \sim \sigma_{D_2}^*(H_2|_{D_2})$. Hence only the case that $\ol{D}_2 \cong \F_0$ can occur and $D_2$ belongs to the class (D). By \cite{a-f83}, $\sigma_{D_2}$ is an isomorphism without one ruling $\Sigma_0$ and $\sigma_{D_2}|_{\Sigma_0}$ is a double covering to $\P^1$. Hence $\eu(D_2)=4$ and we have the first assertion.

We prove the second assertion below. To seek a contradiction, we assume that $\eu(D_2)=4$ and $\eu(D_1 \cap D_2)=5$. Then $B_1(D_1 \cap D_2)=0$ and $D_1 \cap D_2$ consists of four curves whose intersection number with $(H_1+H_2)|_{D_2} \sim -K_{D_2}$ is one by (\ref{eq:No18.0}).

Suppose that $D_2$ is normal. 
Then $D_1 |_{D_2} \subset D_2$ is a sum of four $(-1)$-curves with trivial first Betti number.
However, \cite[Appendix: Configurations of the Singularity types]{qi02} shows us that every sum of four $(-1)$-curves in $D_2$ has non-trivial first Betti number, a contradiction.

Suppose that $D_2$ is non-normal. Then we have shown that $D_2$ belongs to the class (D). 
By \cite{a-f83}, we have $\sigma_{D_2}^*(-K_{D_2}) \sim \Sigma_0+2f_0$. Then ${\sigma_{D_2*}}^{-1}(D_1|_{D_2})$ consists of four curves whose intersection number with $\Sigma_0+2f_0$ is one. Hence each of four curves is linearly equivalent to $f_0$. However, this implies that $D_1|_{D_2}$ is disconnected since $\sigma_{D_2}$ is as stated above, which contradicts the ampleness of $D_1|_{D_2}$.
\hfill $\blacksquare$
\end{claimproof}

By Lemma \ref{lem:euler} and Claim \ref{cl:6.4}, we have 
\[
\eu(D_1)=\eu(D_1 \cap D_2)-\eu(D_2)-B_3(V)+5 \leq 1,
\]
but we obtain $\eu(D_1) \geq 2$ by Lemmas \ref{lem:No18.1}--\ref{lem:No18.5} below, a contradiction.
\end{proof}

Here we explain how to deduce $\eu(D_1)\geq 2$ from Lemmas \ref{lem:No18.1}--\ref{lem:No18.5}.
If it holds that $\eu(D_1)\leq 3$, then $D_1$ is a certain non-normal surface (Lemma \ref{lem:No18.1}). Suppose $D_1$ is such a certain non-normal surface. Then its normalization is a $\P^1$-bundle over a smooth curve (Lemma \ref{lem:No18.2}) and the arithmetic genus $g_a$ of the curve is either $0$ or $1$ (Lemmas \ref{lem:nnwdP}--\ref{lem:No18.3}). We can compute $\eu(D_1)\geq 3$ when $g_a=0$ (Lemma \ref{lem:No18.4}) and $\eu(D_1)=2$ when $g_a=1$ (Lemma \ref{lem:No18.5}). Hence the inequality $\eu(D_1)\geq 2$ always holds. 

\begin{lem}\label{lem:No18.1}
If $\eu(D_1) \leq 3$, then $D_1$ is non-normal with $(E_{D_1} \cdot H_1 )>0$, where $E_{D_1}$ is the conductor locus of $D_1$.
\end{lem}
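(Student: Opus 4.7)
The plan is to study the restricted morphism $\varphi_1|_{D_1}\colon D_1\to\P^1$ arising from the extremal contraction $\varphi_1$, and to compute $\eu(D_1)$ via the constructible Euler-characteristic formula for this fibration. The lemma will then follow by proving its contrapositive.

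By adjunction in the smooth threefold $V$, one has $K_{D_1}=(K_V+D_1)|_{D_1}=-H_2|_{D_1}$. Together with the intersection numbers $H_1^2\cdot D_1=0$ and $H_1\cdot H_2\cdot D_1=2$, this gives
\[
p_a(H_1|_{D_1})=1+\tfrac{1}{2}\,H_1\cdot D_1\cdot(H_1-H_2)=0.
\]
Since $D_1$ is prime and $D_1\not\sim H_1$, the morphism $\varphi_1|_{D_1}$ is flat with one-dimensional fibers, every one of which is a Cartier divisor in the class $H_1|_{D_1}$ and therefore has arithmetic genus $0$.

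The contrapositive I would prove is the following: if $D_1$ is normal, or if $D_1$ is non-normal with $(E_{D_1}\cdot H_1)=0$, then $\eu(D_1)\ge 4$. In the first case, base change shows that the generic fiber of $\varphi_1|_{D_1}$ is normal, hence smooth of dimension $1$, and having $p_a=0$ it is a smooth $\P^1$. In the second case, $(E_{D_1}\cdot H_1)=0$ forces the non-normal locus $E_{D_1}$ to be contained in finitely many fibers of $\varphi_1|_{D_1}$, so after deleting those fibers $D_1$ is already normal and the same conclusion applies to the generic fiber. Additivity of the topological Euler characteristic for the fibration with generic fiber $\P^1$ then gives
\[
\eu(D_1)=2\cdot 2+\sum_{y}\bigl(\eu(F_y)-2\bigr)=4+\sum_{y}\bigl(\eu(F_y)-2\bigr).
\]
Because each $F_y$ is a connected projective curve of arithmetic genus $0$, its reduction is a tree of smooth rational curves and $\eu(F_y)\ge 2$, so $\eu(D_1)\ge 4$, contradicting $\eu(D_1)\le 3$. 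Hence $D_1$ must be non-normal with $(E_{D_1}\cdot H_1)>0$.

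The only delicate point is the inequality $\eu(F_y)\ge 2$: from the constancy of $p_a$ in a flat family one must rule out both irreducible components of positive genus and cycles in the dual graph of $F_y^{\mathrm{red}}$. This is standard once one notes $p_a(F_y^{\mathrm{red}})\le p_a(F_y)=0$ and applies Mayer--Vietoris to a tree configuration of rational curves, so the lemma follows cleanly once the flatness and the generic-smoothness claims are in place.
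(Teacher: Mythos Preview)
Your approach coincides with the paper's: both restrict $\varphi_1$ to $D_1$ and argue that if $D_1$ is normal, or non-normal with conductor locus not dominating $\P^1$, then the general fibre of $\varphi_1|_{D_1}\colon D_1\to\P^1$ is a smooth $\P^1$ and hence $\eu(D_1)\ge 4$. You make the Euler-characteristic additivity over the base explicit, whereas the paper simply writes ``hence $\eu(D_1)\ge\eu(\P^1)\times\eu(\P^1)=4$'' and leaves the special-fibre contribution implicit.

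One caution on your final paragraph: the inequality $p_a(F_y^{\mathrm{red}})\le p_a(F_y)$ is \emph{not} a general fact for Cohen--Macaulay curves. For instance, if $E$ is a $(-1)$-curve on a smooth surface then $p_a(2E)=-2<0=p_a(E)$, so the claimed inequality fails. The conclusion $\eu(F_y)\ge 2$ is nonetheless correct in the situation at hand, but the clean justification uses the ambient geometry rather than a bare $p_a$-inequality: since $H_2\cdot F_y=2$ and every curve in $V$ has positive $(-K_V)$-degree, each irreducible component $C_i$ of $F_y$ has $H_2\cdot C_i\in\{1,2\}$. A component with $H_2\cdot C_i=1$ maps isomorphically under $\varphi_2$ to a line in $\P^2$, hence is $\P^1$; combined with $p_a(F_y)=0$ this forces $(F_y)_{\mathrm{red}}$ to be a single $\P^1$ (possibly carrying a double structure) or two $\P^1$'s meeting in a single reduced point, so $\eu(F_y)\in\{2,3\}$.
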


\begin{proof}
As $D_1 \sim H_1+H_2$ and $\varphi_1$ is a conic bundle, so is $\varphi_1|_{D_1}$. Suppose that $D_1$ is normal. Then a general fiber of $\varphi_1|_{D_1}$ is smooth and hence $\eu(D_1) \geq \eu(\P^1) \times \eu(\P^1)=4$. The same conclusion holds when $D_1$ is non-normal with $(E_{D_1} \cdot H_1 )=0$.
\end{proof}

\begin{lem}\label{lem:No18.2}
Suppose that $D_1$ is non-normal with $(E_{D_1} \cdot H_1)>0$. Let $\sigma\colon \ol{D}_1 \rightarrow D_1$ be the normalization and $\ol{E}_{D_1} \subset \ol{D}_1$ the conductor locus of $\ol{D}_1$. Then $\ol{D}_1$ is a $\P^1$-bundle over a smooth curve $Z$. 
\end{lem}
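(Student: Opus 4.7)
The plan is to exploit the conic bundle structure $\psi\coloneqq\varphi_1|_{D_1}\colon D_1\to\P^1$ identified in the proof of Lemma~\ref{lem:No18.1}: since $H_1|_{D_1}$ is the class of a fiber of $\psi$, the hypothesis $(E_{D_1}\cdot H_1)>0$ says that $\psi$ restricted to $E_{D_1}$ is dominant, hence surjective (both sides being $1$-dimensional and proper), so the conductor meets every fiber of $\psi$. Because $D_1$ is a Cartier divisor of the smooth $3$-fold $V$ and the linear system $|H_1|_{D_1}|$ is base-point-free (pulled back from $\P^1$), Bertini implies that a general fiber $F_t$ of $\psi$ is smooth at every smooth point of $D_1$; therefore every point of $F_t\cap E_{D_1}$ is a singular point of the conic $F_t$. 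Consequently a general fiber of $\psi$ is a singular conic, i.e.\ either a pair of distinct lines meeting at one point or a non-reduced double line.

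Next, compose with the normalization to form $\bar\psi\coloneqq\psi\circ\sigma\colon\bar D_1\to\P^1$ and take its Stein factorization $\bar\psi=h\circ f$, with $f\colon\bar D_1\to Z$ of connected fibers and $h\colon Z\to\P^1$ finite. Since $\bar D_1$ is normal, so is $Z$; being a curve, $Z$ is smooth. The behavior of $\sigma$ on the two types of singular conic fibers of $\psi$ is as follows: if $F_t=2l_t$ is a double line, then $\sigma^{-1}(F_t)$ is a single reduced $\P^1$ (so $h$ has degree $1$ over $t$); if $F_t=l_t^++l_t^-$ is nodal, the normalization separates the two branches of $D_1$ along $E_{D_1}$, so $\sigma^{-1}(F_t)$ is a disjoint union of two smooth $\P^1$'s which are interchanged by monodromy around the base (so $h$ has degree $2$). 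In both cases every general fiber of $f$ is a single smooth rational curve.

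To conclude that $f$ is a $\P^1$-bundle, the remaining point---and what I expect to be the main obstacle---is to verify that $\bar D_1$ is smooth and that every (not merely general) fiber of $f$ is a smooth $\P^1$; once this is granted, a proper morphism from a smooth projective surface to a smooth curve with all fibers isomorphic to $\P^1$ is automatically a $\P^1$-bundle. I would attack smoothness by a local analytic computation along the conductor, using the explicit double-cover presentation $g\colon V\to\P^1\times\P^2$ branched along the bidegree-$(2,2)$ divisor $B$, which constrains the local equations of $D_1$ near $E_{D_1}$ to the model $y^2=u(x,t)$ and excludes cuspidal or tacnodal degenerations; the normalization of such a local model is analytically smooth and introduces no exceptional components in the fibers. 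Alternatively, one may invoke the classification of non-normal Gorenstein surfaces with nef anticanonical class---applicable here since $-K_{D_1}\sim H_2|_{D_1}$ is nef by adjunction---which directly yields that the normalization is a $\P^1$-bundle over a smooth curve.
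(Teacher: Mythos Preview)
Your geometric picture is correct and the approach is genuinely different from the paper's. The paper does not analyze the conic fibration directly; instead it passes to the minimal resolution $\pi\colon\wt D_1\to\ol D_1$, sets $\tau=\sigma\circ\pi$, and shows via the conductor inequality that $\tau^*((H_1+H_2)|_{D_1})+K_{\wt D_1}$ has negative intersection with $\tau^*(H_1|_{D_1})$, hence is not nef. Running the relative MMP over $\P^1$ then yields a contraction $g\colon\wt D_1\to Z$; a divisorial contraction would produce a $(-1)$-curve contracted by $\pi$, contradicting minimality, so $g$ is a $\P^1$-bundle. Intersection numbers against a ruling force the discrepancy divisor $A$ to vanish (so $\pi$ is crepant), and a $\P^1$-bundle admits no contractible configuration of $(-2)$-curves except $\Sigma_2\subset\F_2$ over $\Q^2_0$, which is excluded because $\varphi_1\circ\sigma$ is dominant. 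This MMP route is entirely self-contained: it never needs to know that special fibers of $f$ stay irreducible or that $\ol D_1$ is smooth \emph{a priori}---both facts fall out at the end.

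The gap you flag is real, and neither of your proposed fixes closes it as stated. For the classification route, note that $-K_{D_1}\sim H_2|_{D_1}$ is only nef and big (as the paper records in Lemma~\ref{lem:No18.3}), not ample: $D_1$ may contain components of fibers of the conic bundle $\varphi_2$, on which $H_2$ is trivial. So $D_1$ is a non-normal \emph{weak} del Pezzo surface, and the Abe--Furushima/Reid classification of non-normal del Pezzo surfaces does not apply directly; you would need an extension to the weak case, which is not cited in the paper and is essentially what the lemma is proving. For the local-analytic route, you assert that the double-cover presentation over $\P^1\times\P^2$ forces the local model $y^2=u(x,t)$ and ``excludes cuspidal or tacnodal degenerations''; but without pinning down how $D_1\sim H_1+H_2$ interacts with the branch divisor $B$ at the finitely many bad fibers, this is a hope rather than an argument---a pinch point or worse in $E_{D_1}$ over some $t_0$ could, on the face of it, produce a singular point on $\ol D_1$ or a reducible fiber of $f$. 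One minor correction: over $\C$, generic smoothness on the smooth locus of $D_1$ forces the general fiber of $\psi$ to be reduced, so the ``double line'' option does not occur generically; only the nodal case matters, and then $h$ has degree~$2$ globally.
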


\begin{proof}
Let $\pi\colon \wt{D}_1 \rightarrow \ol{D}_1$ be the minimal resolution and $\tau \coloneqq \sigma \circ \pi$. Let $A$ be an effective divisor on $\wt{D}_1$ such that $K_{\wt{D}_1} \sim \pi^*(K_{\ol{D}_1}) - A$. Then we have 
\[
\tau^*\left((H_1+H_2)|_{D_1}\right)+K_{\wt{D}_1} 
\sim \tau^*(H_1|_{D_1}-K_{D_1})+K_{\wt{D}_1}
\sim \tau^*(H_1|_{D_1})-A - \pi^* \ol{E}_{D_1}.
\] 
As $(E_{D_1} \cdot H_1)>0$, it holds that 
\[
\begin{aligned}
&\left(\tau^*((H_1+H_2)|_{D_1})+K_{\wt{D}_1} \cdot \tau^*(H_1|_{D_1})\right)_{\wt{D}_1} \\
=& \left(\tau^*(H_1|_{D_1})-A - \pi^* \ol{E}_{D_1} \cdot \tau^*(H_1|_{D_1})\right)_{\wt{D}_1}\\ 
\leq& -\left(\pi^* \ol{E}_{D_1} \cdot \tau^*H_1|_{D_1}\right)_{\wt{D}_1}=-\left(\ol{E}_{D_1} \cdot \sigma^*H_1|_{D_1}\right)_{\ol{D}_1} <0.
\end{aligned}
\]
Hence $\tau^*\left((H_1+H_2)|_{D_1}\right)+K_{\wt{D}_1}$ is not nef and there is an extremal ray $R \subset \ol{\mathrm{NE}}(\wt{D}_1/\P^1)$ with respect to $\varphi_1 \circ \tau$ such that
\[
\left(\tau^*\left(\left(H_1+H_2\right)|_{D_1}\right)+K_{\wt{D}_1} \cdot R\right)_{\wt{D}_1} <0.
\]
Let $g\colon \wt{D}_1 \rightarrow Z$ be an extremal contraction which corresponds to $R$. Since $\varphi_1 \circ \tau\colon \wt{D}_2 \rightarrow \P^1$ is dominant, we have $\mathrm{dim}\:Z=1$ or $2$.

Suppose that $\mathrm{dim}\:Z=2$. Then there is a curve $C \subset \wt{D}_1$ such that $C^2<0$ and $(C \cdot K_{\wt{D}_1})_{\wt{D}_1} <-\left(C \cdot \tau^*\left((H_1+H_2)|_{D_1}\right)\right)_{\wt{D}_1} \leq 0$. Hence $C$ is a $(-1)$-curve such that $\left(C \cdot \tau^*\left((H_1+H_2)|_{D_1}\right)\right)_{\wt{D}_1}=(\pi_*(C) \cdot \sigma^*((H_1+H_2)|_{D_1}))_{\ol{D}_1}=0$. Since $\sigma^*((H_1+H_2)|_{D_1})$ is ample, the morphism $\sigma$ contracts $C$, which contradicts the minimality of $\sigma$. 

Hence $\mathrm{dim}\:Z=1$ and $g\colon \wt{D}_1 \rightarrow Z$ is a $\P^1$-bundle. 
Let $f$ be a ruling of $\wt{D}_1$. Then we have 
\[
-2 = (f \cdot K_{\wt{D}_1})_{\wt{D}_1} <-\left(f \cdot \tau^*\left(\left(H_1+H_2\right)|_{D_1}\right)\right)_{\wt{D}_1} \leq 0.
\]
As $f^2=0$, the morphism $\pi$ does not contract $f$. We note that $\tau^*(H_2|_{D_1})$ is a sum of rulings. Hence
\begin{equation}\label{eq:7.0.1}
\begin{aligned}
&(f \cdot -\tau^*K_{D_1})_{\wt{D}_1} =(f \cdot \tau^*((H_1+H_2)|_{D_1}))_{\wt{D}_1} = 1 \textup{ and }\\
&(f \cdot A + \pi^* \ol{E}_{D_1})_{\wt{D}_1}=-(f \cdot \tau^*\left(\left(H_1+H_2\right)|_{D_1}\right)+K_{\wt{D}_1})_{\wt{D}_1}=1.
\end{aligned}
\end{equation}
Since $(\tau^*(H_1|_{D_1}) \cdot \pi^*\ol{E}_{D_1})_{\wt{D}_1} >0$, it holds that 
\begin{equation}\label{eq:claim4}
(f \cdot \pi^* \ol{E}_{D_1})_{\wt{D}_1}=1 \textup{ and } (f \cdot A)_{\wt{D}_1}=0.
\end{equation}
Then $A$ is a sum of rulings. Hence $A=0$ and $\pi$ is crepant.
As $g\colon \wt{D}_1 \rightarrow Z$ is $\P^1$-bundle, the surface $\ol{D}_1$ is smooth unless $\ol{D}_1 \cong \Q^2_0$ and $\wt{D}_1 \cong \F_2$. However, $\ol{D}_1 \not \cong \Q^2_0$ because $\varphi_1 \circ \sigma \colon \ol{D}_1 \rightarrow \P^1$ is dominant. Hence $\pi$ is the identity and we have the assertion by regarding $g$ as a morphism from $\ol{D}_1$. 
\end{proof}

\begin{lem}\label{lem:nnwdP}
Let $S$ be a non-normal weak del Pezzo surface and $\sigma \colon \ol{S} \rightarrow S$ the normalization of $S$. Let $E$ (resp.\,$\ol{E}$) be the conductor locus of $S$ (resp.\,$\ol{S}$). Then $\mathrm{dim}H^0( \mathcal{O}_{\ol{E}})=1$. Moreover, one of the following two cases occurs:
\begin{enumerate}
\item[\textup{(1)}] $\mathrm{dim}H^1( \mathcal{O}_{\ol{E}})=0, (\sigma^*K_S \cdot \ol{E})_{\ol{S}}=-2, (K_S\cdot E)_{S}=-1$
\item[\textup{(2)}] $ \mathrm{dim}H^1( \mathcal{O}_{\ol{E}})=1, (\sigma^*K_S\cdot \ol{E})_{\ol{S}}=0, (K_S\cdot E)_{S}=0$
\end{enumerate}
\end{lem}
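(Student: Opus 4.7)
The approach combines the conductor formula for non-normal Gorenstein surfaces with adjunction on $\ol{S}$ and a Kawamata--Viehweg-type vanishing coming from $-K_S$ being nef and big.

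First, since both $S$ and $\ol{S}$ are Gorenstein and $\sigma$ is finite and birational, Grothendieck duality for $\sigma$ yields the conductor formula
\[ K_{\ol{S}} + \ol{E} \sim \sigma^* K_S. \]
Combining this with the adjunction formula on $\ol{S}$ gives
\[ (\sigma^* K_S \cdot \ol{E})_{\ol{S}} = ((K_{\ol{S}} + \ol{E}) \cdot \ol{E})_{\ol{S}} = 2 p_a(\ol{E}) - 2 = 2 \bigl( h^1(\mathcal{O}_{\ol{E}}) - h^0(\mathcal{O}_{\ol{E}}) \bigr). \]

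Next I would prove the connectedness assertion $h^0(\mathcal{O}_{\ol{E}}) = 1$. Rewriting the conductor formula as $-\ol{E} \sim K_{\ol{S}} - \sigma^* K_S$ and using that $-\sigma^* K_S$ is nef and big (being the pullback of the nef and big divisor $-K_S$), Kawamata--Viehweg vanishing on $\ol{S}$ yields $H^1(\ol{S}, \mathcal{O}_{\ol{S}}(-\ol{E})) = 0$. Here one needs $\ol{S}$ to have at worst klt (in fact rational) singularities, which holds for the normalization of a non-normal Gorenstein weak del Pezzo surface; for the cases actually used in the paper, $\ol{S}$ is smooth (compare classes (B), (C), (D) of \cite{a-f83}). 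The long exact sequence attached to $0 \to \mathcal{O}_{\ol{S}}(-\ol{E}) \to \mathcal{O}_{\ol{S}} \to \mathcal{O}_{\ol{E}} \to 0$ then gives $H^0(\mathcal{O}_{\ol{E}}) \cong H^0(\mathcal{O}_{\ol{S}}) = \C$, proving the first claim.

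With $h^0(\mathcal{O}_{\ol{E}}) = 1$, the displayed identity becomes $(\sigma^* K_S \cdot \ol{E}) = 2 h^1(\mathcal{O}_{\ol{E}}) - 2$. Nefness of $-\sigma^* K_S$ forces $(\sigma^* K_S \cdot \ol{E}) \leq 0$, hence $h^1(\mathcal{O}_{\ol{E}}) \in \{0, 1\}$, which produces the dichotomy of cases (1) and (2) together with the stated values of $(\sigma^* K_S \cdot \ol{E})_{\ol{S}}$. To deduce the values of $(K_S \cdot E)_S$, I would use the defining feature of the conductor locus of a non-normal Gorenstein surface: the restriction $\sigma|_{\ol{E}} \colon \ol{E} \to E$ is generically of degree $2$, so $\sigma_* \ol{E} = 2E$ as $1$-cycles. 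The projection formula then gives $(\sigma^* K_S \cdot \ol{E})_{\ol{S}} = (K_S \cdot \sigma_* \ol{E})_S = 2(K_S \cdot E)_S$, yielding $(K_S \cdot E)_S = -1$ in case (1) and $(K_S \cdot E)_S = 0$ in case (2).

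The main obstacle is checking the hypotheses for Kawamata--Viehweg on $\ol{S}$, i.e.\ that the singularities of the normalization are mild enough. A uniform argument relies on Reid's general structure result for non-normal Gorenstein surfaces (the normalization is Gorenstein with canonical singularities), but the cleanest route in the present context is to read this off the Hidaka--Watanabe / Alexeev--Faenzi classification of non-normal Gorenstein del Pezzo surfaces already used elsewhere in the paper.
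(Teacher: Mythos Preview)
Your argument is correct and is precisely the content of \cite[Lemma~3.35]{mo82}, which is all the paper invokes for its proof; in particular the conductor formula $K_{\ol S}+\ol E\sim\sigma^*K_S$, adjunction, the vanishing $H^1(\ol S,\mathcal O_{\ol S}(-\ol E))=0$ from nef-and-bigness of $-\sigma^*K_S$, and the relation $\sigma_*\ol E=2E$ coming from the Gorenstein length identity $\operatorname{length}(\ol R/\mathcal C)=2\operatorname{length}(R/\mathcal C)$ at the generic point of $E$ are exactly Mori's steps. Your caution about the singularities of $\ol S$ is well placed but not a gap: in Mori's setting (and in every case needed here) $\ol S$ has at worst rational double points, so Kawamata--Viehweg applies; alternatively one can bypass this by first showing $H^1(\mathcal O_{\ol S})=0$ from $H^1(\mathcal O_S)=0$ and the conductor sequence, which is closer to Mori's original 1982 phrasing.
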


\begin{proof}
This follows from the same argument as in the proof of \cite[Lemma 3.35]{mo82}.
\end{proof}

\begin{lem}\label{lem:No18.3}
Under the assumptions of Lemma \ref{lem:No18.2}, it holds that $(f \cdot \ol{E}_{D_1})_{\ol{D}_1}$ $= -(f \cdot \sigma^*K_{D_1})_{\ol{D}_1}=1$, $\sigma^*(H_1|_{D_1}) \equiv 2f$ and $g_a(Z)=g_a(\ol{E}_{D_2}) \leq 1$, 
\end{lem}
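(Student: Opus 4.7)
The plan is to combine the intersection numbers already extracted in the proof of Lemma~\ref{lem:No18.2} with a global computation on $V$ and the conductor-adjunction formula on the smooth surface $\ol{D}_1$.

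First I would dispose of the two intersection identities with $f$. Since the proof of Lemma~\ref{lem:No18.2} established $A=0$ and $\pi=\id$, the surface $\wt{D}_1$ coincides with $\ol{D}_1$ and $\tau=\sigma$. Equation~(\ref{eq:7.0.1}) then reads $(f\cdot \sigma^*((H_1+H_2)|_{D_1}))=1$, and together with the adjunction identity $-K_{D_1}\sim (H_1+H_2)|_{D_1}$ (which follows from $K_V+D_1+D_2=0$), this yields $-(f\cdot \sigma^*K_{D_1})=1$. Equation~(\ref{eq:claim4}) (with $A=0$, $\pi=\id$) gives $(f\cdot \ol{E}_{D_1})=1$.

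Next I would verify $\sigma^*(H_1|_{D_1})\equiv 2f$. Since $g\colon \ol{D}_1\to Z$ was built as an extremal contraction in $\ol{\mathrm{NE}}(\ol{D}_1/\P^1)$, the morphism $\varphi_1\circ\sigma$ factors through $g$; hence $\sigma^*(H_1|_{D_1})=g^*L$ for some divisor $L$ on $Z$, so $\sigma^*(H_1|_{D_1})\equiv d\cdot f$ for $d=\deg L$. To pin down $d$, I intersect with $\sigma^*(H_2|_{D_1})$: from $(f\cdot \sigma^*(H_1|_{D_1}))=0$ and step~1, we get $(f\cdot \sigma^*(H_2|_{D_1}))=1$, so the left side equals $d$. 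On the other hand, the projection formula gives
\[
(\sigma^*(H_1|_{D_1})\cdot \sigma^*(H_2|_{D_1}))_{\ol{D}_1}
=(D_1\cdot H_1\cdot H_2)_V
=((H_1+H_2)\cdot H_1\cdot H_2)_V.
\]
Because $V\to\P^1\times\P^2$ is a double cover, the triple intersection numbers on $V$ are twice those on $\P^1\times\P^2$, so $(H_1^2\cdot H_2)_V=0$ and $(H_1\cdot H_2^2)_V=2$, yielding $(D_1\cdot H_1\cdot H_2)_V=2$. Hence $d=2$.

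For the genus bound, I would apply Lemma~\ref{lem:nnwdP} to the non-normal Gorenstein weak del Pezzo surface $D_1$, obtaining $(\sigma^*K_{D_1}\cdot \ol{E}_{D_1})\in\{-2,0\}$. Using the conductor relation $K_{\ol{D}_1}+\ol{E}_{D_1}=\sigma^*K_{D_1}$ for the normalization of a Gorenstein surface, adjunction on $\ol{D}_1$ reads
\[
2p_a(\ol{E}_{D_1})-2=(K_{\ol{D}_1}+\ol{E}_{D_1})\cdot \ol{E}_{D_1}=\sigma^*K_{D_1}\cdot \ol{E}_{D_1}\in\{-2,0\},
\]
so $p_a(\ol{E}_{D_1})\in\{0,1\}$. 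Finally, since $(\ol{E}_{D_1}\cdot f)=1$, the conductor divisor is a section of the $\P^1$-bundle $g$, hence smooth and isomorphic to $Z$, which gives $g_a(Z)=g_a(\ol{E}_{D_1})\le 1$.

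The main obstacle I anticipate is the last identification $\ol{E}_{D_1}\cong Z$: one must rule out extra vertical components in $\ol{E}_{D_1}$ (i.e.\ that $\ol{E}_{D_1}$ is reduced and irreducible, not of the form section plus rulings). This should follow from the minimality of the conductor together with the classification of non-normal Gorenstein del Pezzo surfaces in \cite{a-f83}, but it is the step that requires care rather than pure computation.
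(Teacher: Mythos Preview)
Your approach is essentially the paper's, but two points deserve correction.

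First, the adjunction identity you invoke is wrong: from $K_V+D_1+D_2=0$ and $D_2\sim H_2$ one gets $-K_{D_1}=D_2|_{D_1}\sim H_2|_{D_1}$, not $(H_1+H_2)|_{D_1}$. This is harmless, since equation~(\ref{eq:7.0.1}) already records $(f\cdot -\tau^*K_{D_1})=1$ directly (the equality with $(f\cdot\tau^*((H_1+H_2)|_{D_1}))$ holds because $\tau^*(H_1|_{D_1})$ is a sum of rulings), so you should simply cite (\ref{eq:7.0.1}) and (\ref{eq:claim4}) as the paper does.

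Second, your final step---``$(\ol{E}_{D_1}\cdot f)=1$, hence $\ol{E}_{D_1}$ is a section and $\ol{E}_{D_1}\cong Z$''---is not justified, and in fact the subsequent Lemma~\ref{lem:No18.4} explicitly treats the case where $\ol{E}_{D_1}$ is reducible (namely $\Sigma_2$ plus two rulings on $\F_2$). So your anticipated obstacle is real, but the fix is not the classification in \cite{a-f83}; rather, you should bypass the section claim entirely. On a geometrically ruled surface $\ol{D}_1\to Z$ with fibre $f$, any effective divisor $D$ with $(D\cdot f)=1$ satisfies $D\equiv C_0+bf$ numerically, and a direct computation with $K_{\ol{D}_1}\equiv -2C_0+(2g_a(Z)-2-e)f$ gives
\[
2p_a(D)-2=(K_{\ol{D}_1}+D)\cdot D=2g_a(Z)-2,
\]
independently of $b$. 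Thus $p_a(\ol{E}_{D_1})=g_a(Z)$ holds automatically from $(\ol{E}_{D_1}\cdot f)=1$, and Lemma~\ref{lem:nnwdP} then gives $g_a(Z)\le 1$. This is what the paper means by ``the genus formula''.
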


\begin{proof}
The first assertion holds by (\ref{eq:7.0.1}) and (\ref{eq:claim4}). As $-(\sigma^*(H_1|_{D_1}) \cdot \sigma^*K_{D_1})_{\ol{D}_1}=(H_1 \cdot H_2 \cdot D_1)=2$, we have the second assertion. 

Since $-K_{D_1} \sim H_2|_{D_1}$ is nef and big, the surface $D_1$ is a Gorenstein weak del Pezzo surface. 
Hence the last assertion holds by Lemma \ref{lem:nnwdP} and the genus formula.
\end{proof}

\begin{lem}\label{lem:No18.4}
If $Z$ is rational, then $\eu(D_1) \geq 3$.
\end{lem}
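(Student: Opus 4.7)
The plan is to compute $\eu(D_1)$ via additivity of Euler characteristic across the normalization $\sigma\colon\ol{D}_1 \to D_1$. Since $\sigma$ is an isomorphism off the conductor loci,
\[
\eu(D_1) = \eu(\ol{D}_1) - \eu(\ol{E}_{D_1}) + \eu(E_{D_1}).
\]
When $Z \cong \P^1$, Lemma \ref{lem:No18.2} makes $\ol{D}_1$ a Hirzebruch surface $\F_n$ with $\eu(\ol{D}_1) = 4$, and Lemma \ref{lem:No18.3} forces $\ol{E}_{D_1}$ to be a section of $g$, hence isomorphic to $\P^1$ with $\eu(\ol{E}_{D_1}) = 2$. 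Thus $\eu(D_1) = 2 + \eu(E_{D_1})$, reducing the task to proving $\eu(E_{D_1}) \geq 1$.

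I first pin down the degree of $\sigma\restriction_{\ol{E}_{D_1}}$. Since $g_a(Z) = 0$, Lemma \ref{lem:nnwdP} places us in its case (1): $(\sigma^*K_{D_1} \cdot \ol{E}_{D_1}) = -2$ and $(K_{D_1} \cdot E_{D_1}) = -1$. The projection formula $(\sigma^*K_{D_1} \cdot \ol{E}_{D_1}) = \deg(\sigma\restriction_{\ol{E}_{D_1}}) \cdot (K_{D_1} \cdot E_{D_1})$ then forces that degree to equal $2$. Consequently the normalization of $E_{D_1}$ is dominated $2$-to-$1$ by $\P^1$, hence is itself $\P^1$; moreover the ampleness of $-K_{D_1}$ together with $(E_{D_1} \cdot -K_{D_1}) = 1$ implies that $E_{D_1}$ is irreducible and reduced.

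Next I compute the arithmetic genus $p_a(E_{D_1})$. Because $D_1$ is Gorenstein, $E_{D_1}$ is Cartier and $\sigma^*E_{D_1} = \ol{E}_{D_1}$ as Cartier divisors. Using the identity $\ol{E}_{D_1} = \sigma^*K_{D_1} - K_{\ol{D}_1}$ on $\F_n$ together with the computation $(\sigma^*(H_2|_{D_1}))^2 = (H_2|_{D_1})^2 = 2$ (which follows from $D_1 \sim H_1+H_2$ and the intersection numbers on $V$), I can identify $\ol{E}_{D_1} \equiv \Sigma_n + \tfrac{n+2}{2}f_n$ explicitly (so $n$ must be even), yielding $\ol{E}_{D_1}^2 = 2$. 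Combined with $\sigma_*\ol{E}_{D_1} = 2E_{D_1}$ as $1$-cycles, the projection formula then gives $E_{D_1}^2 = 1$, and adjunction on $D_1$ produces $p_a(E_{D_1}) = 1$.

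To conclude, $E_{D_1}$ is an integral projective curve with rational normalization and $p_a = 1$, so its total $\delta$-invariant equals $1$. Writing $\delta' = \sum_{p \in \Sing\,E_{D_1}}(r_p - 1)$ for the topological correction, where $r_p$ is the number of analytic branches at $p$, the standard inequality $\delta' \leq \delta$ yields $\eu(E_{D_1}) = 2 - \delta' \geq 1$, whence $\eu(D_1) \geq 3$. The delicate point I expect to be the main obstacle is the intersection-theoretic bookkeeping on the non-normal $D_1$: one must carefully distinguish $\sigma^*E_{D_1}$ as a Cartier divisor from $\sigma_*\ol{E}_{D_1}$ as a $1$-cycle, so that the factor of $2$ arising from $\deg\sigma\restriction_{\ol{E}_{D_1}}$ enters the right places and makes all numerical identities self-consistent.
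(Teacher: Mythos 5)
Your opening move — the additivity formula $\eu(D_1)=\eu(\ol{D}_1)-\eu(\ol{E}_{D_1})+\eu(E_{D_1})$ and $\eu(\ol{D}_1)=4$ — is exactly what the paper does. But the step ``Lemma \ref{lem:No18.3} forces $\ol{E}_{D_1}$ to be a section of $g$, hence isomorphic to $\P^1$ with $\eu(\ol{E}_{D_1})=2$'' is a genuine gap, and it is precisely the point where the paper has to work. Lemma \ref{lem:No18.3} only gives $(f\cdot\ol{E}_{D_1})=1$, i.e.\ the fiber degree of the divisor class; your own computation $\ol{E}_{D_1}\equiv\Sigma_n+\tfrac{n+2}{2}f_n$ shows the class is $\Sigma_0+f_0$ or $\Sigma_2+2f_2$, and such a member can perfectly well be \emph{reducible}: a section plus one or two rulings (the rulings contribute $0$ to $(f\cdot\ol{E}_{D_1})$). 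Since $p_a(\ol{E}_{D_1})=0$ only forces a tree of $\P^1$'s, in the reducible case on $\F_2$ one has $\eu(\ol{E}_{D_1})=4$, your identity degenerates to $\eu(D_1)=\eu(E_{D_1})$, and you would need $\eu(E_{D_1})\geq 3$ — which your $\delta$-invariant argument cannot deliver, because $E_{D_1}$ is then also reducible (in the paper's analysis it is $\sigma(\Sigma_2)\cup\sigma(C_1)$ with $\sigma(C_1)=\sigma(C_2)$). The paper's proof is essentially a case division on exactly this reducibility, with the reducible $\F_2$ case occupying most of the argument and yielding $\eu(D_1)\geq 4-4+3=3$ only after identifying $\sigma(C_1)=\sigma(C_2)$ and counting the intersection point with $\sigma(\Sigma_2)$.

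Two further steps in your write-up lean on facts that are not available. First, $-K_{D_1}\sim H_2|_{D_1}$ is only nef and big ($D_1$ is a weak del Pezzo surface, per the proof of Lemma \ref{lem:No18.3}), not ample, so $(E_{D_1}\cdot(-K_{D_1}))=1$ does not force $E_{D_1}$ irreducible: a component with intersection number $0$ (an irreducible piece of a fiber of $\varphi_2$) can be present, and in the reducible case it is. Second, the uniform statement $\deg(\sigma|_{\ol{E}_{D_1}})=2$ via the projection formula presupposes that $\ol{E}_{D_1}\to E_{D_1}$ is finite of constant degree onto a single irreducible target; when both curves are reducible the correct statement is $\sigma_*\ol{E}_{D_1}=\sum_i\deg(\sigma|_{C_i})\,\sigma(C_i)$ as $1$-cycles, and the degrees differ component by component (the rulings map isomorphically while two of them share an image). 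So the ``delicate bookkeeping'' you flag at the end is real, but it cannot be resolved without first splitting into the irreducible and reducible configurations of $\ol{E}_{D_1}$, which is the content of the paper's proof.
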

\begin{proof}
By Lemmas \ref{lem:No18.2} and \ref{lem:No18.3}, the surface $\ol{D}_1$ is a Hirzebruch surface which contains a section $-\sigma^*K_{D_1}$ whose self-intersection number is $2$. Since $p_a(\ol{E}_{D_1})=0$, we have $-(\sigma^*K_{D_1} \cdot E_{D_1})_{\ol{D}_1}=2$ by Lemma \ref{lem:nnwdP}.
Hence one of the following two cases occurs:
\begin{enumerate}
\item[\textup{(1)}] $\ol{D}_1 \cong \F_0$ \textup{ and } $\sigma^*K_{D_1} \sim E_{D_1} \sim \Sigma_0+f_0$.
\item[\textup{(2)}] $\ol{D}_1 \cong \F_2$ \textup{ and } $\sigma^*K_{D_1} \sim E_{D_1} \sim \Sigma_2+2f_2$.
\end{enumerate}

Suppose that the case (1) occurs. Then $D_1$ is a non-normal del Pezzo surface of class (C) in \cite{a-f83}, which implies $E_{D_1} \cong \P^1$. Hence $\eu(D_1)=\eu(\ol{D}_1)-\eu(\ol{E}_{D_1})+\eu(E_{D_1}) \geq 3$.

Suppose that the case (2) occurs. Then we have 
\begin{equation}\label{eq:lem7.9.0}
(\ol{E}_{D_1} \cdot \sigma^*(H_1|_{D_1}))_{\ol{D}_1}=(\ol{E}_{D_1} \cdot \sigma^*(H_2|_{D_1}))_{\ol{D}_1}=2.
\end{equation}

Suppose that $\ol{E}_{D_1}$ is irreducible. Then so is $E_{D_1}$. 
Since $\sigma|_{\ol{E}_{D_1}}$ is not birational, we have $(E_{D_1} \cdot H_1) = 1$ by (\ref{eq:lem7.9.0}). 
Since $\varphi_1|_{E_{D_1}}\colon E_{D_1} \rightarrow \P^1$ is birational, we obtain $E_{D_1} \cong \P^1$ and $\eu({D}_1)=\eu(\ol{D}_1)-\eu(\ol{E}_{D_1})+\eu(E_{D_1}) = 4.$

Suppose that $\ol{E}_{D_1}$ is reducible. Then $\ol{E}_{D_1}$ consists of rulings $C_1$, $C_2$, and $\Sigma_2$. 
Since $(\Sigma_2 \cdot \sigma^*(H_2|_{D_1}))_{\ol{D}_1}=0$, the curve $\sigma(\Sigma_2)$ is an irreducible component of a fiber of the conic bundle $\varphi_2$ and hence $\sigma(\Sigma_2) \cong \P^1$.

Fix $i \in{1,2}$. Since $(C_i \cdot \sigma^*(H_1|_{D_1}))_{\ol{D}_1}=0$, we have $\sigma(\Sigma_2) \neq \sigma(C_i)$ and $\sigma(C_i)$ is an irreducible component of an intersection $D_1|_{H_1}$. 
Since $H_1$ is isomorphic to a quadric surface in $\P^3$ and $D_1|_{H_1} \sim -\frac{1}{2}K_{H_1}$, we have $\sigma(C_i) \cong \P^1$. As $(C_i \cdot \sigma^*(H_2|_{D_1}))_{\ol{D}_1}=1$, the morphism $\sigma|_{C_i}$ is an isomorphism. Hence $\sigma(C_1)=\sigma(C_2)$ since both of them are contained in $E_{D_1}$. 

Since $(\sigma(C_1) \cdot H_2)=1$, $(\sigma(\Sigma_2) \cdot H_2)=0$ and $E_{D_1}$ is connected, we have $\sharp\left(\sigma(\Sigma_2) \cap \sigma(C_1)\right)=1$. Hence $\eu({D}_1) =\eu(\ol{D}_1)-\eu(\ol{E}_{D_1})+\eu(E_{D_1}) \geq 4-4+3=3$. 
\end{proof}

\begin{lem}\label{lem:No18.5}
If $Z$ is elliptic, then $\eu(D_1) = 2$.
\end{lem}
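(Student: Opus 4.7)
The plan is to compute $\eu(D_1)$ via the normalization pushout. Since $\sigma \colon \ol{D}_1 \to D_1$ restricts to an isomorphism outside the conductor loci, additivity of the Euler characteristic for constructible decompositions gives
\[
\eu(D_1) = \eu(D_1 \setminus E_{D_1}) + \eu(E_{D_1}) = \eu(\ol{D}_1) - \eu(\ol{E}_{D_1}) + \eu(E_{D_1}).
\]
First I would evaluate the easy terms: because $\ol{D}_1 \to Z$ is a $\P^1$-bundle over the elliptic curve $Z$, multiplicativity yields $\eu(\ol{D}_1) = 0$. By Lemma \ref{lem:No18.3} we have $(\ol{E}_{D_1} \cdot f)_{\ol{D}_1} = 1$, so $\ol{E}_{D_1}$ is a section of $g$, hence isomorphic to $Z$ and thus a smooth elliptic curve with $\eu(\ol{E}_{D_1}) = 0$. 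The formula therefore collapses to $\eu(D_1) = \eu(E_{D_1})$, and the problem reduces to identifying the conductor curve $E_{D_1}$ on $D_1$.

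The key step is to show that $E_{D_1}$ is a rational curve. Since $g_a(\ol{E}_{D_1}) = g_a(Z) = 1$, we have $\dim H^1(\mathcal{O}_{\ol{E}_{D_1}}) = 1$, so Lemma \ref{lem:nnwdP} places us in case (2), giving $(K_{D_1} \cdot E_{D_1})_{D_1} = 0$. Combined with the identification $-K_{D_1} \sim H_2|_{D_1}$ used throughout the section, this yields
\[
(H_2 \cdot E_{D_1})_V = (H_2|_{D_1} \cdot E_{D_1})_{D_1} = -(K_{D_1} \cdot E_{D_1})_{D_1} = 0,
\]
so the conic-bundle morphism $\varphi_2$ contracts $E_{D_1}$ to a point. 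Consequently $E_{D_1}$ is set-theoretically contained in one fiber of $\varphi_2$, which is a conic in $V$. Since $E_{D_1}$ is irreducible (its underlying set is the image of the irreducible curve $\ol{E}_{D_1}$ under $\sigma$), its reduction coincides with an irreducible component of that conic fiber; in all three possible degenerations of the conic (smooth, reducible, or double line) this component is a $\P^1$, hence $(E_{D_1})_{\red} \cong \P^1$.

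Therefore $\eu(E_{D_1}) = \eu((E_{D_1})_{\red}) = \eu(\P^1) = 2$, and the initial reduction gives $\eu(D_1) = 2$. The main technical care lies in the adjunction step: one must confirm that we are in case (2) rather than case (1) of Lemma \ref{lem:nnwdP}, which is forced by $g_a(\ol{E}_{D_1}) = 1$, and confirm that $\sigma^*(H_2|_{D_1})$ pulls back correctly so that the vanishing $(H_2 \cdot E_{D_1})_V = 0$ indeed contracts $E_{D_1}$ into a single $\varphi_2$-fiber. Once these are in hand, the fact that the fibers of $\varphi_2$ are conics pins down $E_{D_1}$ up to reduction and the computation terminates.
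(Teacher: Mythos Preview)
Your argument is correct in outline and reaches the same endpoint as the paper by a slightly shorter path: instead of explicitly determining the invariant $e$ of the ruled surface $\ol{D}_1$ and pinning down $\ol{E}_{D_1}\equiv C_0$ numerically (as the paper does), you invoke case~(2) of Lemma~\ref{lem:nnwdP} directly via $g_a(\ol{E}_{D_1})=g_a(Z)=1$ from Lemma~\ref{lem:No18.3} to obtain $(H_2\cdot E_{D_1})=0$. Both routes then conclude that $E_{D_1}$ sits inside a single conic fiber of $\varphi_2$, hence $E_{D_1}\cong\P^1$.

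There is, however, one gap in the order of your presentation. The inference ``$(\ol{E}_{D_1}\cdot f)_{\ol{D}_1}=1$, so $\ol{E}_{D_1}$ is a section of $g$'' is not immediate: an effective divisor of fiber-degree one on a ruled surface may be a section plus some fiber components, and if $\ol{E}_{D_1}$ carried such a fiber $f_0$ then $\eu(\ol{E}_{D_1})\neq 0$ and your reduction $\eu(D_1)=\eu(E_{D_1})$ would fail. The paper excludes this possibility by its numerical computation showing $a=0$. You can exclude it instead with material you already have: once case~(2) of Lemma~\ref{lem:nnwdP} gives $(H_2\cdot E_{D_1})=0$, every irreducible component of $E_{D_1}$ has $H_2$-degree zero; but a fiber $f_0\subset\ol{E}_{D_1}$ would satisfy $(f_0\cdot\sigma^*(H_2|_{D_1}))=(f_0\cdot -\sigma^*K_{D_1})=1$ by Lemma~\ref{lem:No18.3}, so $\sigma(f_0)\subset E_{D_1}$ would have positive $H_2$-degree, a contradiction. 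Reordering the proof so that case~(2) is established first, and then deducing that $\ol{E}_{D_1}$ is an honest section, closes the gap cleanly.
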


\begin{proof}
The surface $\ol{D}_1$ is isomorophic to $\mathbb{P}_Z(\mathcal{E})$ for some normalized vector bundle $\mathcal{E}$ of rank two. 
Let $C_0$ be a minimal section of $\ol{D}_1$ and $e\coloneqq $deg$(\det \mathcal{E})$. 
By Lemmas \ref{lem:nnwdP} and \ref{lem:No18.3}, we can write $\ol{E}_{D_1} \equiv C_0+af$ and $-\sigma^*(K_{D_1}) \equiv C_0 +(e-a)f$ for $a \in \Z$. 
Since $-e+2(e-a)=(-\sigma^*(K_{D_1}))_{\ol{D}_1}^2=(H_2^2 \cdot (H_1+H_2))=2$, we obtain $\ol{E}_{D_1} \equiv C_0+(\frac{e}{2}-1)f$ and $-\sigma^*(K_{D_1}) \equiv C_0 +(\frac{e}{2}+1)f$. 

By \cite[Theorem V.2.15]{har77}, we have $e \geq -1$. Since $0 \leq (C_0 \cdot \sigma^*(H_2|_{D_1}))_{\ol{D}_1}$ $=1-\frac{e}{2}$, we have $e=0, 2$. If $e=0$, then $C_0$ is nef and $(\ol{E}_{D_1} \cdot C_0)_{\ol{D}_1} =-1$, a contradiction. Hence $e=2$ and $\ol{E}_{D_1}=C_0$.

As $(\ol{E}_{D_1} \cdot \sigma^*(H_2|_{D_1}))_{\ol{D}_1}=0$, the curve $E_{D_1}$ is an irreducible component of a conic. Hence $E_{D_1} \cong \P^1$ and $\eu(D_1) =\eu(\ol{D}_1)-\eu(\ol{E}_{D_1})+\eu(E_{D_1})=2$. 
\end{proof}

Summarizing the arguments in \S\ref{sec:excimpri} and \S\ref{sec:excpri}, we complete the proof of Theorem \ref{thm:main} (1).

\end{document}